\numberwithin{equation}{section}
\newtheorem{thm}{Theorem}[section]
\newtheorem{lem}[thm]{Lemma}
\newtheorem{rem}{Remark}[section]
\newtheorem{example}[thm]{Example}
\newtheorem{defin}[thm]{Definition}
\newcommand{\eq}[1]{(\ref{#1})}
\renewcommand{\Re}{\operatorname{\rm Re}}
\renewcommand{\Im}{\operatorname{\rm Im}}
\newcommand{\beqast}{\begin{eqnarray*}}
\newcommand{\eqast}{\end{eqnarray*}}
\newcommand{\beqa}{\begin{eqnarray}}
\newcommand{\eqa}{\end{eqnarray}}
\newcommand{\bbe}{\begin{equation}}
\newcommand{\ee}{\end{equation}}
\renewcommand{\Re}{\operatorname{\rm Re}}
\renewcommand{\Im}{\operatorname{\rm Im}}
\newcommand{\bC}{{\mathbb C}}
\newcommand{\bE}{{\mathbb E}}
\newcommand{\bN}{{\mathbb N}}
\newcommand{\bP}{{\mathbb P}}
\newcommand{\bR}{{\mathbb R}}
\newcommand{\bZ}{{\mathbb Z}}
\newcommand{\cF}{{\mathcal F}}
\newcommand{\cE}{{\mathcal E}}
\newcommand{\cG}{{\mathcal G}}
\newcommand{\cL}{{\mathcal L}}
\newcommand{\cC}{{\mathcal C}}
\newcommand{\cQ}{{\mathcal Q}}
\newcommand{\cU}{{\mathcal U}}
\newcommand{\barX}{{\bar X}}
\newcommand{\uX}{{\underline X}}
\newcommand{\cEq}{{\mathcal E_q}}
\newcommand{\cEpq}{{\mathcal E^+_q}}
\newcommand{\cEmq}{{\mathcal E^-_q}}
\newcommand{\phipq}{{\phi^+_q}}
\newcommand{\phimq}{{\phi^-_q}}
\newcommand{\tV}{{\tilde V}}
\newcommand{\Om}{{\Omega}}
\newcommand{\De}{\Delta}
\newcommand{\de}{\delta}
\newcommand{\eps}{\epsilon}
\newcommand{\ka}{\kappa}
\newcommand{\lp}{\lambda_+}
\newcommand{\lm}{\lambda_-}
\newcommand{\mum}{\mu_-}
\newcommand{\mup}{\mu_+}
\newcommand{\mumpr}{\mu'_-}
\newcommand{\muppr}{\mu'_+}
\newcommand{\num}{\nu_-}
\newcommand{\nup}{\nu_+}
\newcommand{\sg}{\sigma}
\newcommand{\om}{\omega}
\newcommand{\omm}{\om_-}
\newcommand{\omp}{\om_+}
\newcommand{\ze}{\zeta}
\newcommand{\ga}{\gamma}
\newcommand{\gap}{\gamma_+}
\newcommand{\gam}{\gamma_-}
\newcommand{\gappr}{\gamma'_+}
\newcommand{\gampr}{\gamma'_-}
\newcommand{\Ga}{\Gamma}
\newcommand{\dd}{\partial}
\newcommand{\bfo}{{\bf 1}}
\begin{document}

\title[joint pdf of a L\'evy process, its extremum, and hitting time of the extremum]
{Efficient evaluation  of joint pdf of a L\'evy process, its extremum, and hitting time of the extremum}
\author[
Svetlana Boyarchenko and
Sergei Levendorski\u{i}]
{
Svetlana Boyarchenko and
Sergei Levendorski\u{i}}

\begin{abstract}
For L\'evy processes with exponentially decaying tails of the L\'evy density, we derive integral representations for the joint cpdf 
$V$
of $(X_T, \barX_T,\tau_T)$
(the process, its supremum evaluated at $T<+\infty$, and the first time at which $X$ attains its supremum).
The first representation is a Riemann-Stieltjes integral in terms of the (cumulative) probability distribution of the supremum process and 
joint probability distribution function of the process and its supremum process. The integral is evaluated using a combination an analog of the
 trapezoid rule. The second representation is amenable to more accurate albeit slower calculations. We calculate explicitly the Laplace-Fourier transform of $V$ w.r.t.  all arguments,
 apply the inverse transforms, and reduce the problem to evaluation of the sum  of 5D integrals.  
The integrals can be evaluated using the summation by parts in the infinite trapezoid rule and  simplified trapezoid rule; the inverse
Laplace transforms can be calculated using the  Gaver-Wynn-Rho algorithm.
Under additional conditions on the domain of analyticity of the characteristic exponent,  
the speed of calculations is greatly increased using the conformal deformation technique.
For processes of infinite variation, the program in Matlab running on a Mac with moderate characteristics achieves the precision better than E-05 in 
 a fraction of a second;  the precision better than  E-10 is achievable in dozens of seconds.
 As the order of the process (the analog of the Blumenthal-Getoor index) decreases, the CPU time increases, and
 the best accuracy achievable with double precision arithmetic decreases.

\end{abstract}

\thanks{
\emph{S.B.:} Department of Economics, The
University of Texas at Austin, 2225 Speedway Stop C3100, Austin,
TX 78712--0301, {\tt sboyarch@utexas.edu} \\
\emph{S.L.:}
Calico Science Consulting. Austin, TX.
 Email address: {\tt
levendorskii@gmail.com}}

\maketitle

\noindent
{\sc Key words:} L\'evy process, extrema of a L\'evy process,  barrier options, Wiener-Hopf factorization, Fourier transform, Laplace transform, 
 Gaver-Wynn Rho algorithm, sinh-acceleration, SINH-regular processes, Stieltjes-L\'evy processes

\noindent
{\sc MSC2020 codes:} 60-08,42A38,42B10,44A10,65R10,65G51,91G20,91G60

\tableofcontents

\section{Introduction}\label{s:intro}
Let $X$ be a one-dimensional L\'evy process on the filtered probability space $(\Om, \cF, \{\cF_t\}_{t\ge 0}, \bP)$
satisfying the usual conditions, and let $\bE$ be the expectation operator under $\bP$. 
Let  $\barX_t=\sup_{0\le s\le t}X_s$ and $\uX_t=\inf_{0\le s\le t}X_s$ be the supremum
and infimum processes (defined path-wise, a.s.); $X_0=\barX_0=\uX_0=0$. Let $\tau_T$ be the first time at which $X$ attains its supremum. The joint probability distribution $V(a_1,a_2;T,t):=\bP[X_T\le a_1,\barX_T\le a_2, \tau_T\le t]$, where $a_1\le a_2$, $a_2>0$ and $0<t\le T$,
of the triplet $\chi=(X_T, \barX_T, \tau_T)$ is an important object in insurance mathematics,
structural credit risk models, mathematical finance, buffer size in queuing theory and the prediction of the ultimate supremum and its time in optimal stopping. As it stated in \cite{MijatovicGeomConvSimul21}, for a general L\'evy process,  analytical calculations are extremely challenging, which lead to the development of numerous approximate methods, mostly, Monte Carlo and multi-level Monte Carlo. See \cite{MijatovicGeomConvSimul21} for a novel advanced Monte Carlo method and review of the related literature.  

 In the paper, we suggest fairly accurate and fast analytical methods for the evaluation of $V(a_1,a_2;T,t)$, applicable to essentially all 
 popular classes of L\'evy processes. The first method ({\em DISC-method}) is very simple and relies on efficient procedures for the evaluation
 of $V_{f.t.d.}(h,t):=\bP[\barX_t\ge h]$ and $V_{joint}(a_1,h; T):=\bP[X_T\le a_1, \barX_T\le h]$, $0<h$, $a_1\le h$, the distributions of the supremum of the L\'evy process and its supremum. We discretize the Riemann-Stieltjes integral
  \bbe\label{eq:main1}
V(a_1,a_2;T,t)=\int_{0}^{a_2}V_{f.t.d.}(h,t)\,d_hV_{joint}(\min\{a_1,h\},h; T)
\ee
  using the trapezoid type 
formula for the Riemann-Stieltjes integrals,
introduced in 
\cite{Dragomir11} 
\cite{Dragomir11}:
\beqa\label{RiemannStTrap}
\int_{0}^{a_2} V_{f.t.d.}(h,t)dV_{joint}(\min\{a_1,h\},h,T)
\approx\sum_{j=0}^{M-1}\frac{V_{f.t.d.}(h_j,t)+V_{f.t.d.}(h_{j+1},t)}{2}\\\nonumber
\times (V_{joint}(\min\{a_1,h_{j+1}\},h_{j+1},T)-V_{joint}(\min\{a_1,h_j\},h_{j},T)),
\eqa
where $0=h_0<h_1<\cdots<h_M=a_2$.
One can use different quadrature rules for {R}iemann-{S}tieltjes integrals. See, e.g. \cite{Alomari19}.
If the values of the functions $V_{f.t.d.}(h,t)$ and $V_{joint}(a_1,h;T)$ are calculated using GWR algorithm for the Laplace
inversion (resp., sinh-acceleration in the Bromwich integral), we use the name DISC-GWR (resp., DISC-SINH) method.
Using GWR method and double precision arithmetic, the individual terms on the RHS of 
\eq{RiemannStTrap} are difficult to calculate with the accuracy better than E-08, hence, with any choice of the grid,
 in \eq{RiemannStTrap}, 
it is essentially impossible to achieve the accuracy better than E-08.
If DISC-SINH is used, the individual terms can be calculated fairly fast with the accuracy E-12 (see \cite{EfficientLevyExtremum} for examples), hence, better accuracy can be achieved, at a larger CPU cost.

Simple algorithms developed in \cite{Contrarian,EfficientLevyExtremum} allow one to calculate 
  $V_{joint}(\min\{a_1,h\},h,T)$ 
and $V_{f.t.d.}(h,t)$ with almost machine precision fairly fast unless the order $\nu$ of the process 
(the generalization of the Blumenthal-Getoor index) is close to 0. One algorithm uses the Gaver-Wynn-Rho algorithm 
for the Laplace inversion, the other one is based on the sinh-deformation of the contour of integration in the Bromwich integral
and the corresponding sinh-change of variables. Thus, we have two versions: {\em DISC-GWR} and 
{\em DISC-SINH} methods, the former being faster and the latter more accurate. In the majority of popular classes
of L\'evy models, the value functions of barrier options  are very irregular at the boundary unless a sizable Brownian motion component is present (see Section \ref{ss:regularity}), hence, in a vicinity of $h=0$ and vicinity of $h=a_1$ if $a_1>0$,
the discretization error decreases very slowly, and practical sufficiently accurate error bounds are extremely difficult to derive.
We estimate  errors of DISC-methods using {\em SINH method},  which is applicable if  $\dd_a\dd_hV_{joint}(a,h; T)$ is integrable on $\{a<h, h>0\}$, and the Laplace-Fourier transform of  $\dd_a\dd_hV_{joint}(a,h; T)$ can be efficiently calculated. This is the case for the majority of popular L\'evy processes
 \cite{EfficientLevyExtremum} bar stable L\'evy processes\footnote{The method admits a modification to the case of stable L\'evy processes in the same vein as the method of
\cite{EfficientLevyExtremum} is modified in \cite{EfficientStableLevyExtremum}.} and L\'evy processes of finite variation, with non-zero drfit. 
Following \cite{EfficientLevyExtremum}, for $a_1\le h$, $h>0$, introduce $V_2(a_1,h; T)=V_{joint}(a_1,h; T)-\bP[X_T\le a_1]$. Using explicit representations of the Laplace-Fourier transform of the functions on the RHS of the equation 
 \beqa\label{eq:main2}
V(a_1,a_2;T,t)&=&\bfo_{a_1\le 0}\int_{0}^{a_2}dh\,V_{f.t.d.}(h,t)\,\dd_hV_2(a_1,h; T)\\\nonumber
&&+\bfo_{a_1>0}\left\{\int_{0}^{a_2}dh\,V_{f.t.d.}(h,t)\,\dd_hV_2(0,h; T)\right.
\\\nonumber
&&\left.+\int_{0}^{a_1}da\int_{a}^{a_2}dh\,V_{f.t.d.}(h,t)\,\dd_a\dd_hV_2(a,h; T)\right\},
\eqa 
we change the variables, calculate the integrals w.r.t. $a, h$ explicitly, and express $V(a_1,a_2;T,t)$ as a sum
of quintuple integrals (two Laplace inversions and three Fourier inversions in each integral).
 The resulting Laplace-Fourier inversion formula is justified
in the sense of generalized functions for all popular L\'evy processes; the justification in the classical sense and efficient evaluation of the resulting integrals are possible
for a wide subclass of SINH-regular processes
 introduced in \cite{SINHregular}. Conformal deformations of the lines of integration with the subsequent corresponding change
 of variables of the form $\xi=i\om_1+b\sinh(i\om+y)$ and $q=\sg+ib_\ell\sinh(i\om_\ell+y_\ell)$ in the Fourier and Bromwich integrals, respectively, and application of the simplified trapezoid rule allow one to evaluate the integrals accurately.
The errors of SINH method are fairly easy to control. The complexity of the method is of the order of
$(E\ln E)^5$, where $E=\ln(1/\eps)$. For processes of order $\nu\ge 1$, 
the error tolerance  of the order of $10^{-10}$
 can be satisfied using double precision arithmetic; the best accuracy achievable using double precision arithmetic decreases as $\nu\to 0$.  The accuracy of SINH-method is significantly higher than that of DISC methods; the CPU cost is higher
 as well. 
\begin{figure}
\begin{tabular}{cc}

 \begin{subfigure}[h]{0.45\textwidth}

 \centering
    \includegraphics[width=0.9\textwidth,height=0.4\textheight]{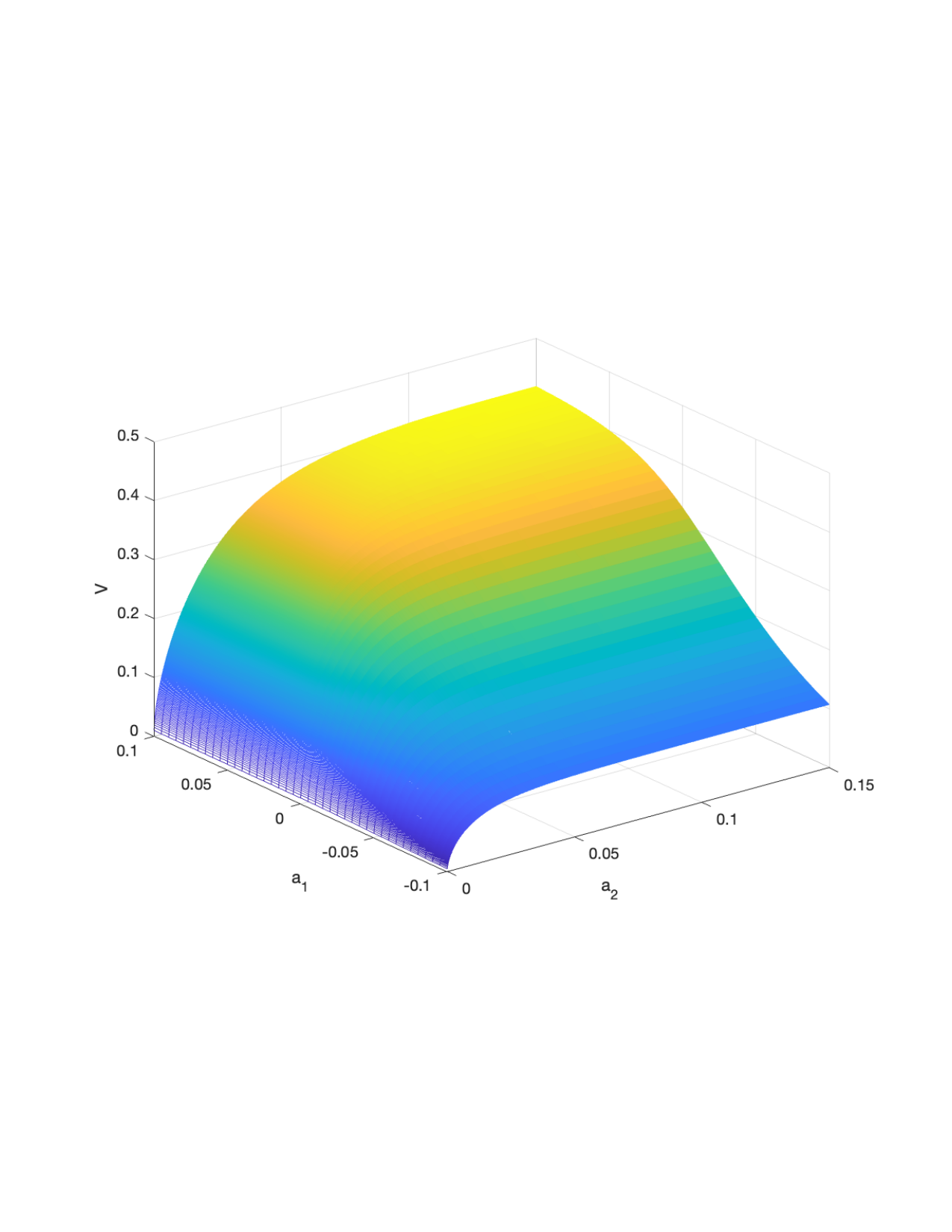}
    \caption{$\nu=1.2$}\label{nu1.2}
\end{subfigure}
&
\begin{subfigure}[h]{0.45\textwidth}
\centering
    \includegraphics[width=0.9\textwidth,height=0.4\textheight]{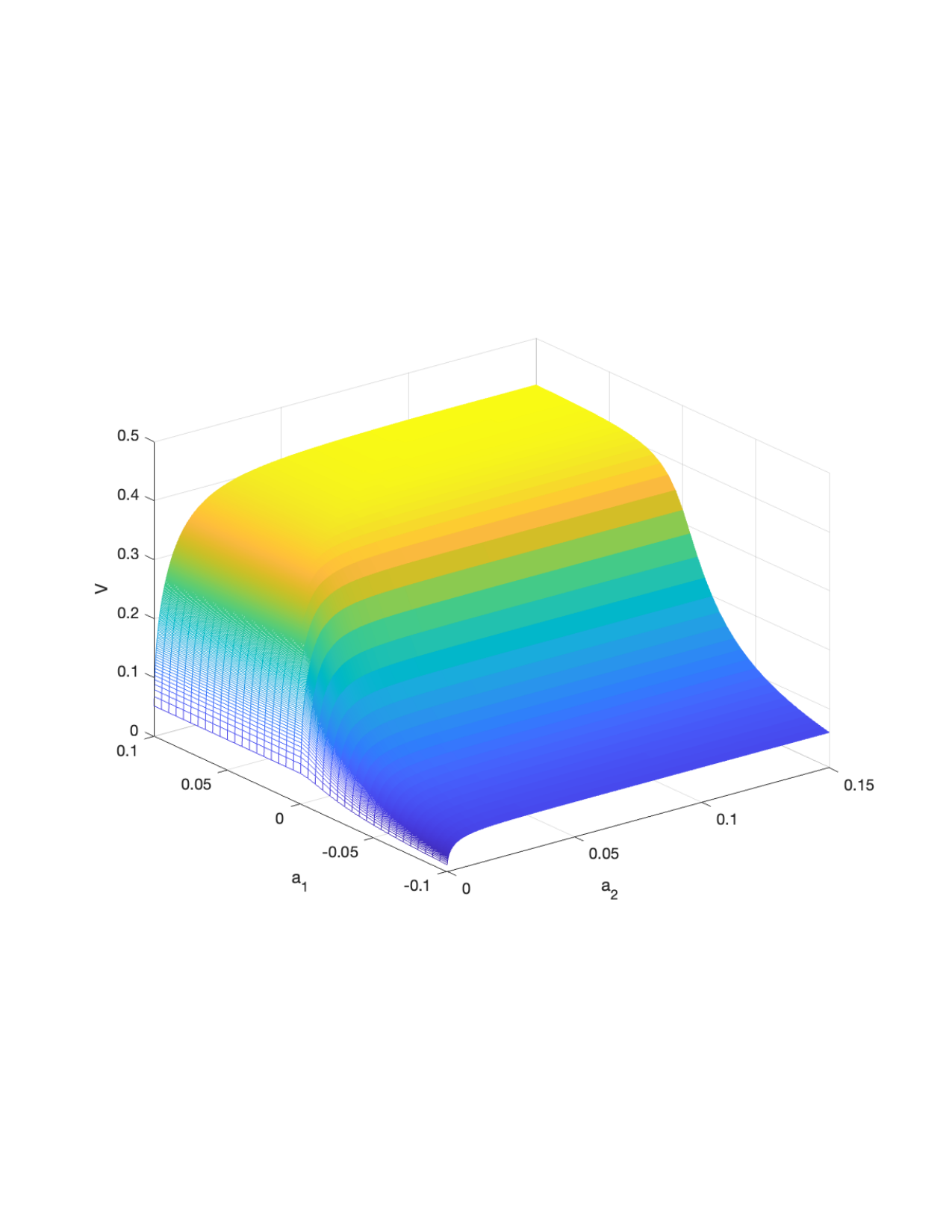}\caption{$\nu=0.8$.} \label{nu0.8}
\end{subfigure}
\\ \\

\begin{subfigure}[h]{0.45\textwidth}
\centering
    \includegraphics[width=0.9\textwidth,height=0.4\textheight]{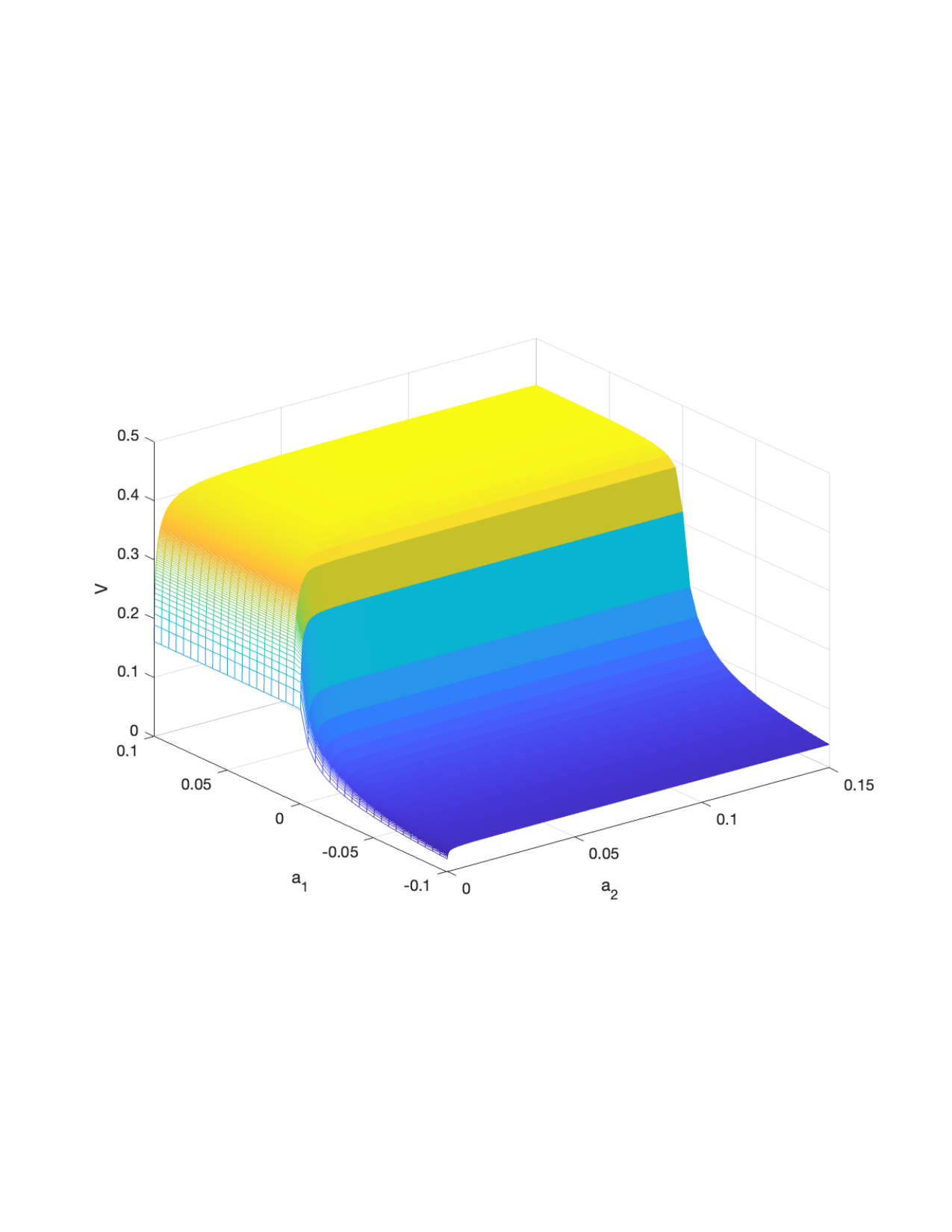} \caption{$\nu=0.5$}\label{nu05} \end{subfigure}
&
\begin{subfigure}{0.45\textwidth}
   \centering
    \includegraphics[width=0.9\textwidth,height=0.4\textheight]{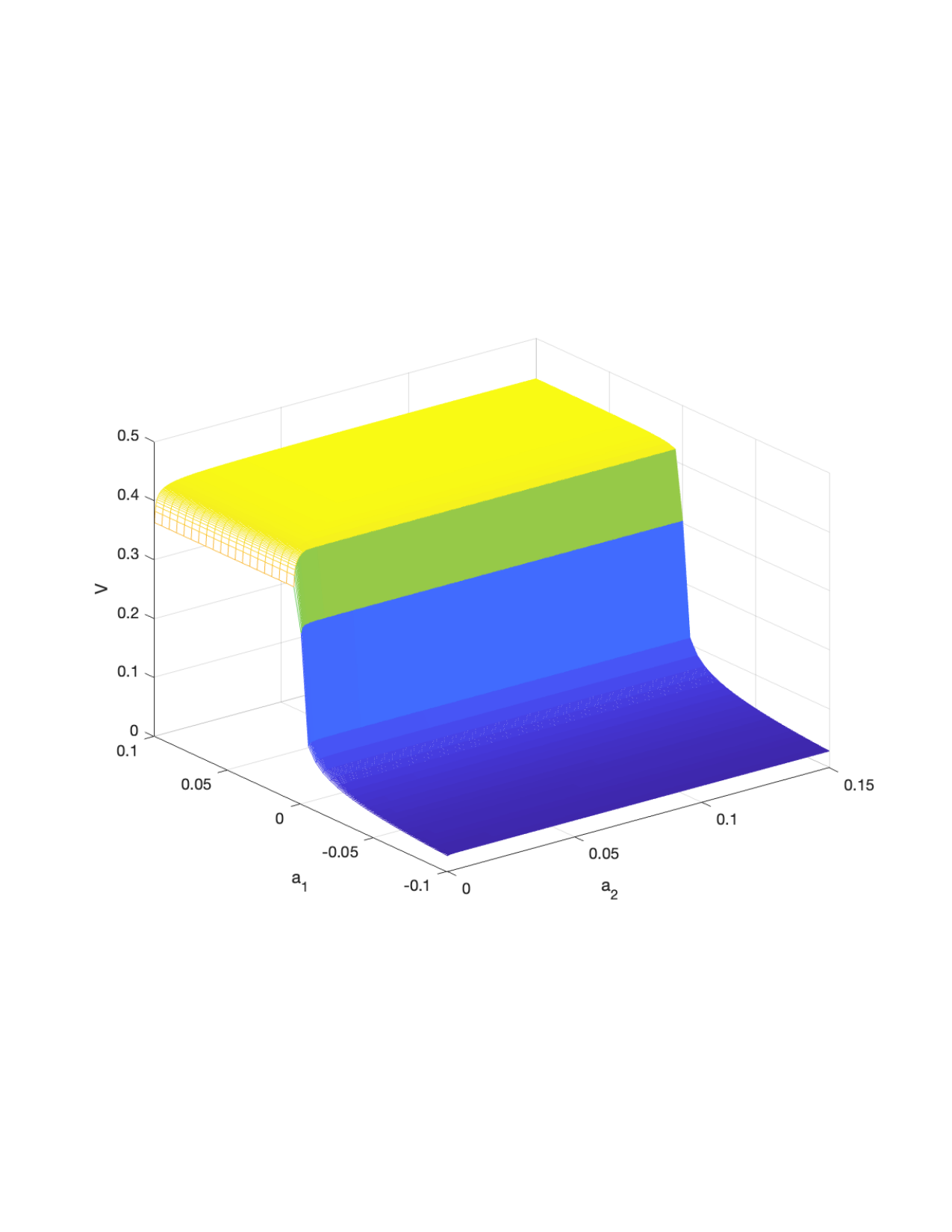}
    \caption{$\nu=0.3$ }\label{nu03}
\end{subfigure}
\end{tabular}
\caption{Joint cpdf $V(a_1,a_2;T,t):=\bP[X_T\le a_1,\barX_T\le a_2, \tau_T\le t]$. $X$: KoBoL, $\nu=0.3,0.5,0.8,1.2, \mu=0, \lp=1, \lm=-2$,
$\mu=0$,
the second instantaneous moment $m_2=0.1$ fixes the parameter $c$ of KoBoL. $T=0.25, t=0.1$. Grids: $\vec{a_1}=0.005*(-20:1:20),
\vec{a_2}=6.125*10^{-5}*(1:1:2400)$; total number of points in $(a_1,a_2)$ space is 98,400. For each $\nu$, the total CPU time  (one run) is of the order of 110-120 sec.
Errors are in the range: (A) $[2*10^{-7}, 2*10^{-6}]$; (B) $[10^{-6}, 10^{-5}]$; (C) $[10^{-5}, 10^{-4}]$; (D) $[0.003,0.02]$.}
\label{meshes}
\end{figure}
\begin{figure}
\scalebox{0.5}
{\includegraphics{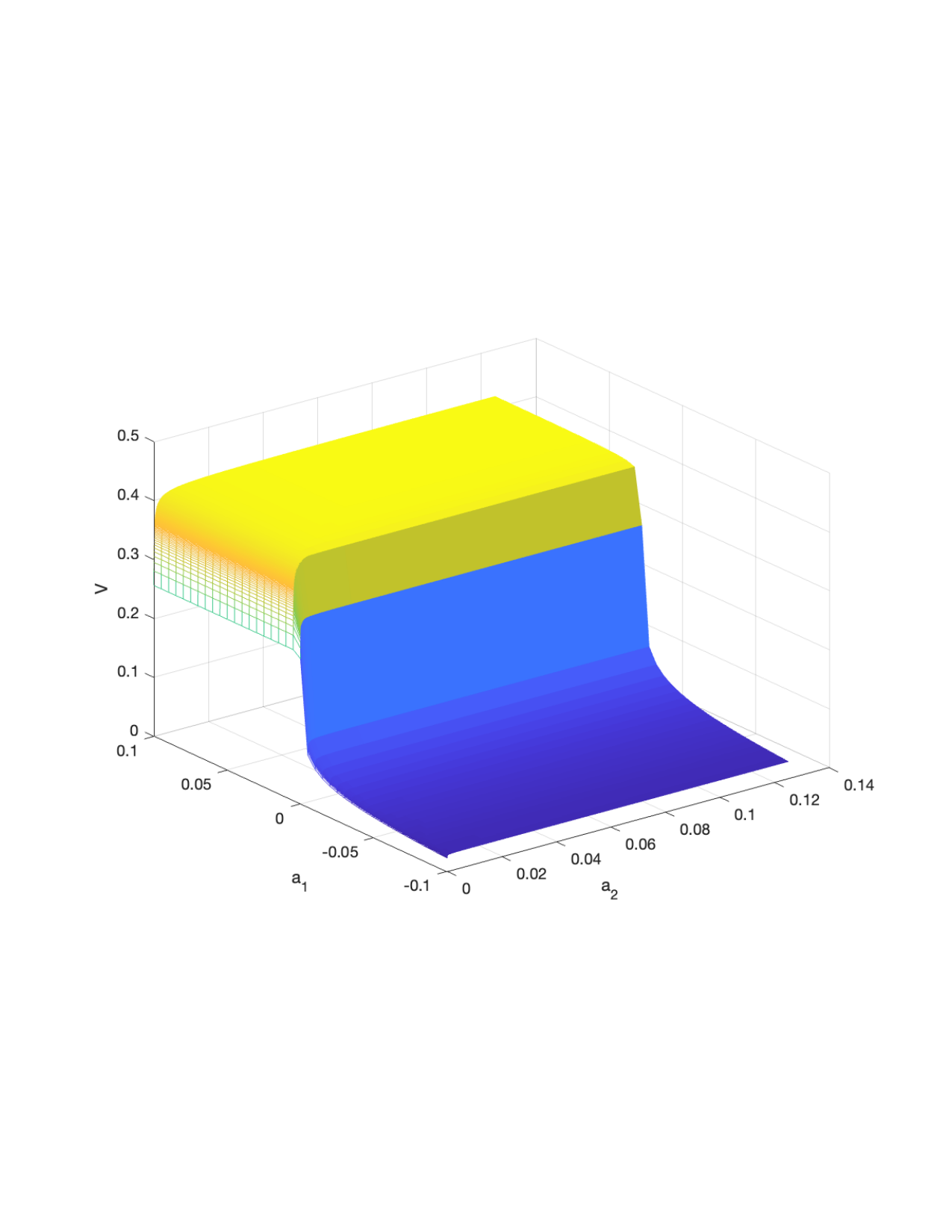}}
\caption{\small Joint cpdf $V(a_1,a_2;T,t):=\bP[X_T\le a_1,\barX_T\le a_2, \tau_T\le t]$. $X$: KoBoL, $\nu=0.3, \mu=0, \lp=1, \lm=-2$,
$\mu=0$,
the second instantaneous moment $m_2=0.1$ fixes the parameter $c$ of KoBoL. $T=0.25, t=0.1$. Grids: $\vec{a_1}=0.005*(-20:1:20),
\vec{a_2}=1.5625*10^{-5}*(1:1:9600)$; total number of points in $(a_1,a_2)$ space is 393,600. CPU time is 839,832 sec}
\label{meshnu0.3}
\end{figure} 
In an example shown in Fig.~\ref{meshes}, DISC-GWR is used with the uniform grid; the step is $6.125\cdot 10^{-5}$, 
and the total CPU time for calculation of values at 98,400 points  is of the order of 110-120 sec. for all 4 cases $\nu=1.2,0.8,0.5,0.3$.\footnote{The calculations in the paper
were performed in MATLAB R2023b-academic use, on a MacPro Chip Apple M1 Max Pro chip
with 10-core CPU, 24-core GPU, 16-core Neural Engine 32GB unified memory,
1TB SSD storage.
The  CPU times shown can be significantly improved using parallelized calculations.} 
The accuracy decreases as $\nu$ decreases, and for $\nu=0.2$ (not shown), the best accuracy achievable using double precision
arithmetic is several percent and more. Fig.~\ref{meshes} illustrates the reason: as $\nu$ decreases,
the discretization error increases because the regularity of
the probability distribution decreases. In all 4 cases, the value of $V$ at $a_2=0$ is zero but even at a distance
$6.125*10^{-5}$ from 0, the value is non-negligible and increases as $\nu$ decreases; the derivative w.r.t. $a_2$ tends to infinity
as $a_2\downarrow 0$ in all 4 cases. Fig.~\ref{meshnu0.3} demonstrates that if $\nu=0.3$, the cpdf is not small even
at $a_2=1.5625*10^{-5}$.
The accuracy of SINH-method decreases with $\nu$ as well but the accuracy of the order of
$10^{-3}$ is achievable even in the case $\nu=0.2$ because all calculations are in the dual space.

  The rest of the paper is organized as follows.  In Sect. \ref{s:prel}, we recall the definition of SINH-regular processes
and SL-regular processes, formulas for the Laplace transforms of $V_{f.t.d.}(h,t)$, $V_2(a_1,h; T)$ in terms of the expected present value operators, and various formulas for the Wiener-Hopf factors. We also study the regularity
of the Wiener-Hopf factors. In  Sect. \ref{s: basic ingredients and regularity}, we list integral representations for $V_{f.t.d.}(h,t)$, $V_2(a_1,h; T)$ and 
$\dd_hV_2(a_1,h; T)$.
 In the same section, we outline other popular methods for pricing barrier options which can be used
instead of the method of \cite{single} and  explain the difficulties that these methods face. 
Explicit formulas of the SINH-method 
are derived
in Sect. \ref{s:main}. 
The algorithms and numerical examples are in Sect. \ref{s:numer}. 
In Section \ref{s:concl}, we  summarise the results of the paper,
In the appendix, we recall basic formulas and properties of the infinite trapezoid rule, summation by parts and GWR algorithm.

\section{Classes of processes and formulas for the Wiener-Hopf factors}\label{s:prel}

\subsection{General classes of L\'evy processes amenable to efficient calculations}\label{ss:gen_eff_Levy}
For $\nu=0+$ (resp., $\nu=1+$), set $|\xi|^\nu=\ln|\xi|$ (resp., $|\xi|^\nu=|\xi|\ln|\xi|$), and introduce the following complete ordering in
the set $\{0+,1+\}\cup (0,2]$: the usual ordering in $(0,2]$; $\forall\ \nu>0, 0+<\nu$; $1<1+$; $\forall\ \nu>1, 1+<\nu$.
We use  the notation  $S_{(\mum,\mup)}=\{\xi\ |\ \Im\xi\in (\mum,\mup)\}$,
 $\cC_{\gam,\gap}=\{e^{i\varphi}\rho\ |\ \rho> 0, \varphi\in (\gam,\gap)\cup (\pi-\gap,\pi-\gam)\}$, 
 $\cC_{\ga}=\{e^{i\varphi}\rho\ |\ \rho> 0, \varphi\in (-\ga,\ga)\}$.
 \begin{defin}\label{def:SINH_reg_proc_1D0}(\cite[Defin. 2.1]{EfficientAmenable})
 We say that $X$ is a SINH-regular L\'evy process  (on $\bR$) of   order
 $\nu$ and type $((\mum,\mup);\cC; \cC_+)$
 iff
the following conditions are satisfied:
\begin{enumerate}[(i)]
\item
$\nu\in\{0+,1+\}\cup (0,2]$ and $\mum<0< \mup$;
\item
$\cC=\cC_{\gam,\gap}, \cC_+=\cC_{\gampr,\gappr}$, where $\gam\le \gampr<0<\gappr\le\gap$;
\item
the characteristic exponent $\psi$ of $X$ admits the representation
\bbe\label{eq:reprpsi}
\psi(\xi)=-i\mu\xi+\psi^0(\xi),
\ee
where $\mu\in\bR$, and 
$\psi^0$ admits analytic continuation to $i(\mum,\mup)+ (\cC\cup\{0\})$;
\item
for any $\varphi\in (\gam,\gap)$, there exists $c_\infty(\varphi)\in \bC\setminus (-\infty,0]$ s.t.
\begin{equation}\label{asympsisRLPE}
\psi^0(\rho e^{i\varphi})\sim  c_\infty(\varphi)\rho^\nu, \quad \rho\to+\infty;
\end{equation}
\item
the function $(\gam,\gap)\ni \varphi\mapsto c_\infty(\varphi)\in \bC$ is continuous;
\item
for any $\varphi\in (\gampr, \gappr)$, $\Re c_\infty(\varphi)>0$.
\end{enumerate}
\end{defin}
\begin{example}\label{ex:KoBoL}{\rm  A generic process of Koponen's family was constructed in  \cite{genBS,KoBoL}  as  pure jump processes with the L\'evy measure of the form
\begin{equation}\label{KBLmeqdifnu}
F(dx)=c_+e^{\lm x}x^{-\nu_+-1}\bfo_{(0,+\infty)}(x)dx+
 c_-e^{\lp x}|x|^{-\nu_--1}\bfo_{(-\infty,0)}(x)dx,
\end{equation}
where $c_\pm>0, \nu_\pm\in [0,2), \lm<0<\lp$.  If $\nu_\pm\in (0,2), \nu_\pm\neq 1$,
\bbe\label{KBLnupnumneq01}
\psi^0(\xi)=c_+\Ga(-\nu_+)((-\lm)^{\nu_+}-(-\lm-i\xi)^{\nu_+})+c_-\Ga(-\nu_-)(\lp^{\nu_-}-(\lp+i\xi)^{\nu_-}).
\ee
 Note that a specialization
 $\nu_\pm=\nu\neq 1$, $c=c_\pm>0$, of KoBoL used in a series of numerical examples in \cite{genBS} was named CGMY model in \cite{CGMY} (and the labels were changed:
 letters $C,G,M,Y$ replace the parameters $c,\nu,\lm,\lp$ of KoBoL):
 \bbe\label{KBLnuneq01}
 \psi^0(\xi)= c\Ga(-\nu)[(-\lm)^{\nu}-(-\lm- i\xi)^\nu+\lp^\nu-(\lp+ i\xi)^\nu].
\ee
Evidently, $\psi^0$ given by \eq{KBLnuneq01} is analytic in $\bC\setminus i\bR$, and  $\forall\ \varphi\in (-\pi/2,\pi/2)$, \eq{asympsisRLPE} holds with
\bbe\label{ascofnupeqnumcc}
c_\infty(\varphi)=-2c\Ga(-\nu)\cos(\nu\pi/2)e^{i\nu\varphi}.
\ee
}
\end{example}
In \cite{EfficientAmenable}, we defined a class of Stieltjes-L\'evy processes (SL-processes). In order to save space, we do not reproduce the complete set of definitions. Essentially, $X$ is called a (signed) SL-process if $\psi$ is of the form
\bbe\label{eq:sSLrepr}
\psi(\xi)=(a^+_2\xi^2-ia^+_1\xi)ST(\cG^0_+)(-i\xi)+(a^-_2\xi^2+ia^-_1\xi)ST(\cG^0_-)(i\xi)+(\sg^2/2)\xi^2-i\mu\xi, 
\ee
where $ST(\cG)$ is the Stieltjes transform of a (signed) Stieltjes measure $\cG$,  $a^\pm_j\ge 0$, and $\sg^2\ge0$, $\mu\in\bR$.
We call a (signed) SL-process $X$ SL-regular if $X$ is SINH-regular. We proved in \cite{EfficientAmenable} that if $X$ is a (signed) SL-process then $\psi$ admits analytic continuation to the complex plane with two cuts along the imaginary axis, and
if $X$ is a SL-process, then, for any $q>0$, equation $q+\psi(\xi)=0$ has no solution on $\bC\setminus i\bR$.
We also proved that all popular classes of L\'evy processes bar the Merton model and Meixner processes are regular SL-processes, with $\ga_\pm=\pm \pi/2$;
the Merton model and Meixner processes are regular signed SL-processes, and $\ga_\pm=\pm \pi/4$.
 For lists of SINH-processes and SL-processes, with calculations of the order and type,
see \cite{EfficientAmenable}.

Note the following simple but important property of the characteristic exponent of a SINH-regular process, which is immediate from the
Cauchy integral formula.
\begin{lem}\label{lem:der_psi0}
Let $X$ be a SINH-regular L\'evy process  (on $\bR$) of   order
 $\nu$ and type $((\mum,\mup);\cC; \cC_+)$. Then for any closed cone $\cC'\subset (\bC\cup\{0\})$,
 segment $[\mu'_-,\mu'_+]\subset (\mum,\mup)$ and $n\in \bZ_+$, there exists $C_n(\mumpr,\muppr, \cC')$ such that
 \bbe\label{eq:der_psi0}
|(\psi^0)^{(n)}(\xi)|\le C_n(\mu'_-,\mu'_+, \cC')(1+|\xi|)^{\nu-n},\ \xi\in i[\mu'_-,\mu'_+]+\cC'.
\ee
\end{lem}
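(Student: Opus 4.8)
The plan is to reduce the estimate on the derivatives to an estimate on $\psi^0$ itself, and then obtain \eq{eq:der_psi0} from the Cauchy integral formula for the derivatives, exactly as announced before the statement. Throughout I take $\cC'$ to be a closed cone contained, together with the origin, in $\cC=\cC_{\gam,\gap}$ (as is needed for $\psi^0$ to be analytic on $i[\mumpr,\muppr]+\cC'$ by condition (iii)), with angular range compactly contained in that of $\cC$, and $[\mumpr,\muppr]$ a segment compactly contained in $(\mum,\mup)$. I would first fix an intermediate closed cone $\cC''=\cC_{\gam'',\gap''}$ and segment $[\mu_-'',\mu_+'']$ with $\cC'$ a closed subcone of $\cC''$, $\cC''$ a closed subcone of $\cC$, and $[\mumpr,\muppr]\subset(\mu_-'',\mu_+'')\subset[\mu_-'',\mu_+'']\subset(\mum,\mup)$, so that $i[\mu_-'',\mu_+'']+\cC''$ still lies well inside the domain of analyticity of $\psi^0$.

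The case $n=0$ comes first: I would show that
\bbe
|\psi^0(\xi)|\le C_0(1+|\xi|)^\nu,\qquad \xi\in i[\mu_-'',\mu_+'']+\cC''.
\ee
For $|\xi|$ large this shifted set is contained in an unshifted closed cone whose angular range is still compactly inside $(\gam,\gap)\cup(\pi-\gap,\pi-\gam)$, so the ray asymptotics \eq{asympsisRLPE} apply; the continuity of $\varphi\mapsto c_\infty(\varphi)$ (condition (v)) gives $\sup|c_\infty|<\infty$ on the compact angular range, producing a uniform bound $|\psi^0(\xi)|\le\tilde C|\xi|^\nu$. The asymptotics is stated only for $\varphi\in(\gam,\gap)$, i.e. on the component of $\cC''$ near the positive real axis; it transfers to the component near the negative real axis through the symmetry $\psi^0(-\bar\xi)=\overline{\psi^0(\xi)}$ valid for characteristic exponents of real-valued processes. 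On the remaining compact part $\{|\xi|\le R\}$ of the set, $\psi^0$ is continuous, hence bounded, and enlarging $C_0$ absorbs this region.

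Next, for $\xi\in i[\mumpr,\muppr]+\cC'$ with $|\xi|$ large, I would observe that the disk $D(\xi,r|\xi|)$ is contained in $i[\mu_-'',\mu_+'']+\cC''$ for a fixed $r>0$ depending only on the angular gap between $\cC'$ and $\cC''$ and on the gap between the two segments: keeping the imaginary shift fixed, a perturbation of size at most $r|\xi|$ changes the argument of the cone component by less than the chosen angular gap, so the perturbed point remains in $\cC''$. The Cauchy integral formula then gives
\bbe
(\psi^0)^{(n)}(\xi)=\frac{n!}{2\pi i}\oint_{|\ze-\xi|=r|\xi|}\frac{\psi^0(\ze)}{(\ze-\xi)^{n+1}}\,d\ze,
\ee
whence, using the case $n=0$ on the circle,
\bbe
|(\psi^0)^{(n)}(\xi)|\le \frac{n!}{(r|\xi|)^n}\sup_{|\ze-\xi|=r|\xi|}|\psi^0(\ze)|\le \frac{n!\,C_0\,(1+(1+r)|\xi|)^\nu}{(r|\xi|)^n}\le C_n(1+|\xi|)^{\nu-n},
\ee
the last inequality holding for $|\xi|\ge 1$ because there $1+|\xi|\asymp|\xi|$, whatever the sign of $\nu-n$. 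On the compact set $\{\xi\in i[\mumpr,\muppr]+\cC'\ :\ |\xi|\le 1\}$ the derivative $(\psi^0)^{(n)}$ is continuous, hence bounded, and enlarging $C_n$ yields \eq{eq:der_psi0} on the whole set.

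The main obstacle is the case $n=0$: upgrading the pointwise-in-direction relation \eq{asympsisRLPE} to a genuinely \emph{uniform} upper bound on the thickened, imaginarily shifted cone $i[\mu_-'',\mu_+'']+\cC''$. This is where one must use the compactness of the angular range together with continuity of $c_\infty$, and control the harmless perturbation produced by the bounded imaginary shift. Once that uniform bound is in hand, the passage to derivatives is precisely the routine Cauchy estimate, the radius $r|\xi|$ scaling with $|\xi|$ being exactly what produces the gain of $|\xi|^{-n}$ and hence the exponent $\nu-n$.
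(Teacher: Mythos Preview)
Your proposal is correct and follows the same approach the paper indicates: the paper merely says the lemma is ``immediate from the Cauchy integral formula,'' and you have supplied precisely the standard argument behind that phrase---first the $n=0$ bound from the ray asymptotics \eqref{asympsisRLPE} and compactness, then Cauchy estimates on disks of radius proportional to $|\xi|$ to gain the factor $|\xi|^{-n}$. Your reading of the hypothesis as $\cC'\subset\cC\cup\{0\}$ (rather than the literal $\cC'\subset\bC\cup\{0\}$) is the intended one, since otherwise $\psi^0$ need not be analytic on the region in question.
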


\subsection{Wiener-Hopf factorization and  the Laplace transform of $V_{f.t.d.}(h,t)$ and $V_2(a_1,h; T)$}\label{gen_form_barrier}
For $q>0$, let $T_q$ be an exponentially distributed random variable of mean $1/q$, independent of $X$.
For $\xi\in\bR$, set $\phipq(\xi)=\bE[e^{i\barX_{T_q}\xi}]$ and $\phimq(\xi)=\bE[e^{i\uX_{T_q}\xi}]$. The Wiener-Hopf factorization
formula is \bbe\label{whf0}
 \frac{q}{q+\psi(\xi)}=\phipq(\xi)\phimq(\xi),\ \xi\in\bR
 \ee
 (see, e.g. \cite[p.81]{RW})).
Define the (normalized) expected present value operators (EPV-operators) $\cE^\pm_q$ and $\cEq$ by
$\cEpq u(x)=\bE[u(x+\barX_{T_q})],$  $\cEmq u(x)=\bE[u(x+\uX_{T_q})],$ and $\cEq u(x)=\bE[u(x+X_{T_q})]$. Operators $\cE^\pm_q$ and $\cEq$ are pseudo-differential operators (pdo) with the symbols $\phi^\pm_q(\xi)$ and $q/(q+\psi(\xi))$, respectively. 
Evidently, the EPV operators are bounded operators in $L_\infty(\bR)$; if $\psi$ admits analytic continuation to a strip around the real axis, the EPV-operators act in spaces with exponential weights \cite{NG-MBS,barrier-RLPE}. 
 In 
\cite{NG-MBS,barrier-RLPE}, using the operator analog
$
\cEq=\cEpq\cEmq=\cEmq\cEpq
$
of \eq{whf0}, we derived general pricing formulas for barrier options and first touch digitals, for a wide class of 
regular L\'evy processes of exponential type (RLPE). 
The first step is the evaluation of the corresponding perpetual options, equivalently, calculation of the Laplace transform
of the option price w.r.t. time to maturity, for {\em positive values of the spectral parameter.} In \cite{single}, the formulas 
for the perpetual barrier options are generalized for arbitrary L\'evy process. We list the special cases 
of mirror reflections of Lemmas 2.1, 2.3 in \cite{single} sufficient for the purposes of the present paper. 

For $h\in \bR$, let $\tau^+_h$ denote the first entry time by $X$ into $[h,+\infty)$.
\begin{lem}\label{basic_formulas} 
Let $q>0$, $h>0$ and $G\in L_\infty$. Then 
\beqa\label{eq:ftd}
\bE[\tau^+_h\le T_q\ |\ X_0=0]&=&(\cEpq \bfo_{[h,+\infty)})(0),\\\label{eq:jointG}
\bE[G(X_{T_q})\bfo_{(-\infty,h]}(\barX_{T_q})\ |\ X_0=\barX_0=0]
&=&(\cEpq\bfo_{(-\infty,h]}\cEmq G)(0).
\eqa
In particular, for $a\le h$,
\bbe\label{eq:joint}
\bE[\bfo_{(-\infty,a]}(X_{T_q})\bfo_{(-\infty,h]}(\barX_{T_q})\ |\ X_0=\barX_0=0]
=(\cEpq\bfo_{(-\infty,h]}\cEmq\bfo_{(-\infty,a]})(0).
\ee
\end{lem}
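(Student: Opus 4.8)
The plan is to prove \eqref{eq:ftd}, \eqref{eq:jointG}, and \eqref{eq:joint} by reducing each expectation over the running supremum (and the process value at the killed time $T_q$) to an application of the EPV operators $\cEpq$ and $\cEmq$, using the independence of the exponentially distributed horizon $T_q$ together with the Wiener–Hopf decomposition of the path at its maximum. The key structural fact, due to the strong Markov / fluctuation theory of L\'evy processes, is that under the killed measure the pre-maximum and post-maximum parts of the path decouple: the pair $(\barX_{T_q}, \tau_q)$ where the supremum is attained is governed by the ladder process (symbol $\phipq$), and conditionally on the supremum, the remaining increment $X_{T_q}-\barX_{T_q}\le 0$ is an independent copy distributed as $\uX_{T_q}$ (symbol $\phimq$). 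This is exactly the probabilistic content encoded in the operator identity $\cEq=\cEpq\cEmq=\cEmq\cEpq$ recalled before the lemma.

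First I would establish \eqref{eq:ftd}. The event $\{\tau^+_h\le T_q\}$ coincides with $\{\barX_{T_q}\ge h\}$, so
\bbe
\bE[\tau^+_h\le T_q\mid X_0=0]=\bP[\barX_{T_q}\ge h]=\bE[\bfo_{[h,+\infty)}(\barX_{T_q})].
\ee
Since $\cEpq u(x)=\bE[u(x+\barX_{T_q})]$, evaluating at $x=0$ with $u=\bfo_{[h,+\infty)}$ gives $(\cEpq\bfo_{[h,+\infty)})(0)$, which is the claim. The only point needing care is the identification of $\{\tau^+_h\le T_q\}$ with $\{\barX_{T_q}\ge h\}$; these agree up to the $\bP$-null ambiguity at $\barX_{T_q}=h$, and boundedness of $G$ together with the fact that $\cEpq$ is bounded on $L_\infty(\bR)$ ensures the functional identity holds as stated.

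Next I would prove \eqref{eq:jointG}, from which \eqref{eq:joint} follows by taking $G=\bfo_{(-\infty,a]}$ and noting $\{X_{T_q}\le a\}=\{(X_{T_q}-\barX_{T_q})+\barX_{T_q}\le a\}$. The core step is to write
\bbe
\bE[G(X_{T_q})\bfo_{(-\infty,h]}(\barX_{T_q})]
=\bE\big[\,\bfo_{(-\infty,h]}(\barX_{T_q})\,\bE[G(\barX_{T_q}+(X_{T_q}-\barX_{T_q}))\mid \barX_{T_q}]\,\big],
\ee
and then invoke the Wiener–Hopf independence to replace the inner conditional expectation by $(\cEmq G)(\barX_{T_q})$, since $X_{T_q}-\barX_{T_q}$ is distributed as $\uX_{T_q}$ and independent of $\barX_{T_q}$. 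Applying $\cEpq$ to the resulting function $y\mapsto\bfo_{(-\infty,h]}(y)(\cEmq G)(y)$ and evaluating at $0$ yields $(\cEpq\bfo_{(-\infty,h]}\cEmq G)(0)$, where $\bfo_{(-\infty,h]}$ acts as the multiplication (cutoff) operator.

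The main obstacle will be justifying the decoupling of the pre- and post-supremum parts of the path rigorously rather than merely citing the factorization \eqref{whf0}: one must argue that the increment $X_{T_q}-\barX_{T_q}$ is genuinely independent of $\barX_{T_q}$ and distributed as $\uX_{T_q}$, and that the composition of the cutoff operator with $\cEmq$ followed by $\cEpq$ faithfully reproduces the joint law, including the measurability and $L_\infty$-boundedness needed for the identities to be meaningful for bounded Borel $G$. This is precisely the content of the mirror-reflected Lemmas 2.1 and 2.3 of \cite{single}, which I would assume; the present statement is then obtained by specializing those general perpetual-option formulas to the indicator and cutoff data above.
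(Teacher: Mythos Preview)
Your proposal is correct and mirrors the paper's treatment: the paper does not give an independent proof but simply records the lemma as a specialization (mirror reflection) of Lemmas~2.1 and~2.3 in \cite{single}, exactly as you note in your final paragraph. Your sketch of the underlying argument---identifying $\{\tau^+_h\le T_q\}$ with $\{\barX_{T_q}\ge h\}$ for \eqref{eq:ftd}, and invoking the Wiener--Hopf independence of $\barX_{T_q}$ and $X_{T_q}-\barX_{T_q}\stackrel{d}{=}\uX_{T_q}$ for \eqref{eq:jointG}---is the standard route and supplies precisely the content that \cite{single} provides.
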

It follows from \eq{eq:ftd} and \eq{eq:joint} that, for $q>0$, the Laplace transforms of $V_{f.t.d}(h,t)$ and
$V_2(a_1,h; T)$  w.r.t. $t$ and  $T$, respectively, are given by
\beqa\label{tVft}
\tV_{f.t.d}(h,q)&=&q^{-1}(\cEpq \bfo_{[h,+\infty)})(0),\\\label{tVjoint}
\tV_2(a_1,h;q)&=&-q^{-1}(\cEpq\bfo_{[h,+\infty)}\cEmq\bfo_{(-\infty,a_1]})(0).
\eqa
It is evident that each side of \eq{tVft} and \eq{tVjoint} admits analytic continuation w.r.t. $q$ to $\{\Re q>0\}$, therefore, one can recover $V_{f.t.d.}(h,t)$ and $V_2(a_1,h; T)$ using the Bromwich integral: 
\bbe\label{Brom}
 V(T)=\frac{1}{2\pi i}\int_{\Im q=\sg}\frac{e^{qT}}{q}\tV(q)dq,
 \ee
for $T>0$ and any $\sg>0$.
 Unless the function $V$ is  sufficiently regular, the representation \eq{Brom} is valid in the sense of the generalized functions only.
To study the regularity of $\tV_{f.t.d}(h,q)$ and $\tV_2(a_1,h;q)$ and design numerical procedures,
we expand the indicator functions in \eq{tVft} and \eq{tVjoint} 
 into the Fourier integrals, and use the following integral representations and properties of $\phi^\pm_q(\xi)$
  (see, e.g., \cite{NG-MBS,single,paraLaplace,paired,Contrarian,EfficientLevyExtremum}).

 \begin{lem}\label{phipmq_anal_cont}
 Let  there exist  $\mum\le 0\le \mup$, $\mum<\mup$, such that  $\bE[e^{-\ga X_1}]<\infty$, $\forall\ \ga\in [\mum,\mup]$. 
 
 Then
 \begin{enumerate}[(i)]
 \item
 $\psi(\xi)$ admits analytic continuation\footnote{Recall that a function $f$ is said to be analytic in the closure of an open set $U$ if $f$ is analytic in the interior of $U$ and continuous up to the boundary of $U$.} to the strip $S_{[\mum,\mup]}:=\{\xi\in \bC\ |\
 \Im\xi\in [\mum,\mup]\}$; 
 \item 
 let $\sg>\max\{-\psi(i\mum), -\psi(i\mup)\}$. Then there exists $c>0$  s.t. $|q+\psi(\xi)|\ge c$ for $ q\ge \sg$ and $\xi\in S_{[\mum,\mup]}$;
 \item
   let  $q\ge \sg$. Then  $\phipq(\xi)$ (resp., $\phimq(\xi)$) admits analytic continuation to  $\{\Im\xi\ge \mum\}$
   (resp., $\{\Im\xi\le \mup\}$) given by    \beqa\label{analcontphipq}
   \phipq(\xi)&=&\frac{q}{(q+\psi(\xi))\phimq(\xi)}, \ \Im\xi\in [\mum,0],
   \\\label{analcontphimq}
   \phimq(\xi)&=&\frac{q}{(q+\psi(\xi))\phipq(\xi)}, \ \Im\xi\in [0,\mup];
   \eqa
  \item   $\phipq(\xi)$ (resp., $\phimq(\xi)$) is uniformly bounded on  $\{\Im\xi\ge \mum\}$
   (resp., $\{\Im\xi\le \mup\}$).
 
   \end{enumerate}
    \end{lem}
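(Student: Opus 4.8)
The plan is to treat the four assertions in order, using (i) and (ii) as the analytic backbone and then deriving (iii)--(iv) from the Wiener--Hopf factorization \eq{whf0}. For (i) I would start from $\bE[e^{i\xi X_1}]=e^{-\psi(\xi)}$ on $\bR$. Writing $\xi=\eta+i\tau$, the hypothesis $\bE[e^{-\ga X_1}]<\infty$ for $\ga\in[\mum,\mup]$ says precisely that $\bE[e^{i\xi X_1}]=\bE[e^{i\eta X_1}e^{-\tau X_1}]$ is an absolutely convergent integral for every $\tau\in[\mum,\mup]$; differentiating under the expectation (or a Morera/Fubini argument) shows $\xi\mapsto\bE[e^{i\xi X_1}]$ is analytic in the interior of $S_{[\mum,\mup]}$ and continuous up to its boundary. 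Since the law of $X_1$ is infinitely divisible, its characteristic function has no zeros, so a continuous branch of $-\log\bE[e^{i\xi X_1}]$ exists and provides the analytic continuation of $\psi$.

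For (ii) I would convert the finiteness of the moment into a one-sided bound on $\Re\psi$. For $\xi=\eta+i\tau$ with $\tau\in[\mum,\mup]$,
\[
|e^{-\psi(\xi)}|=|\bE[e^{i\eta X_1}e^{-\tau X_1}]|\le\bE[e^{-\tau X_1}]=e^{-\psi(i\tau)},
\]
and $\tau\mapsto\bE[e^{-\tau X_1}]$ is log-convex on $[\mum,\mup]$, hence maximized at an endpoint. This yields $\Re\psi(\xi)\ge\min\{\psi(i\mum),\psi(i\mup)\}=-m$, where $m:=\max\{-\psi(i\mum),-\psi(i\mup)\}$. For real $q\ge\sg>m$ one then gets $|q+\psi(\xi)|\ge\Re(q+\psi(\xi))=q+\Re\psi(\xi)\ge\sg-m=:c>0$ uniformly on the strip, which is (ii).

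For (iii) the starting observations are that $\phipq(\xi)=\bE[e^{i\xi\barX_{T_q}}]$ with $\barX_{T_q}\ge0$ is analytic and bounded by $1$ on $\{\Im\xi>0\}$ and continuous up to $\bR$, and symmetrically $\phimq$ on $\{\Im\xi<0\}$; on $\bR$ the factorization \eq{whf0} holds. The Wiener--Hopf factors are characteristic functions of infinitely divisible laws (equivalently, admit an exponential/Frullani-type representation), so $\phimq$ has no zeros on $\{\Im\xi\le0\}$. Consequently, on the strip $\{\mum\le\Im\xi\le0\}$ the right-hand side of \eq{analcontphipq} is a ratio of functions that are analytic there (the numerator analytic by (i) and nonvanishing by (ii); $\phimq$ analytic and nonvanishing), hence itself analytic, and it coincides with $\phipq$ on $\bR$. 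By the gluing lemma for functions analytic on adjacent regions that match on the common boundary line, this furnishes the analytic continuation of $\phipq$ to $\{\Im\xi\ge\mum\}$ and establishes \eq{analcontphipq}; the argument for $\phimq$ and \eq{analcontphimq} is the mirror image.

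The substance of (iv)---and the step I expect to be the main obstacle---is the finiteness of the one-sided exponential moment of the reflected process. On $\{\Im\xi\ge0\}$ boundedness is immediate: $|\phipq(\xi)|\le\bE[e^{-\Im\xi\,\barX_{T_q}}]\le1$. On the strip $\{\mum\le\Im\xi\le0\}$ one has $|\phipq(\eta+i\tau)|\le\bE[e^{-\tau\barX_{T_q}}]\le\bE[e^{-\mum\barX_{T_q}}]$, so everything reduces to showing $\bE[e^{-\mum\barX_{T_q}}]<\infty$ (and symmetrically $\bE[e^{\mup\uX_{T_q}}]<\infty$). Here the choice of threshold pays off: $-\psi(i\mum)=\log\bE[e^{-\mum X_1}]$, so the hypothesis $q\ge\sg>-\psi(i\mum)$ is exactly the condition under which the supremum sampled at the independent exponential time $T_q$ inherits the exponential moment of $X_1$. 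I would establish this either (a) by a maximal-inequality argument for the exponential (sub)martingale $e^{-\mum X_t+\psi(i\mum)t}$, writing $\bE[e^{-\mum\barX_{T_q}}]=\int_0^\infty qe^{-qt}\bE[e^{-\mum\barX_t}]\,dt$ and using $q+\psi(i\mum)>0$ to dominate the time integral, or (b) by the Spitzer--Baxter--Donsker exponential representation of $\phipq$, whose integrand is dominated for large $t$ by $t^{-1}e^{-(q+\psi(i\mum))t}$ and is therefore integrable precisely when $q+\psi(i\mum)>0$. Once this moment is finite, $\phipq$ is bounded on all of $\{\Im\xi\ge\mum\}$ by $\bE[e^{-\mum\barX_{T_q}}]$, giving (iv), and the same representation supplies, a posteriori, the non-vanishing of $\phimq$ used in (iii). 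The delicacy is that the moment hypothesis is imposed at the closed endpoint $\ga=\mum$, so in route (a) one cannot invoke Doob's $L^p$ inequality with $p>1$ directly and must argue via the time-integral bound (or a truncation/interpolation), which is why I regard this finiteness as the crux of the lemma.
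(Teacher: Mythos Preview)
The paper does not prove this lemma; it is stated as a known result with a reference to \cite{NG-MBS,single,paraLaplace,paired,Contrarian,EfficientLevyExtremum}, so there is no proof in the paper to compare against. Your argument is correct and is essentially the standard route taken in those references: (i) analyticity of $\psi$ from the exponential-moment hypothesis via Morera/Fubini and zero-freeness of infinitely divisible characteristic functions; (ii) the lower bound $\Re\psi(\xi)\ge\psi(i\tau)\ge\min\{\psi(i\mum),\psi(i\mup)\}$ from $|e^{-\psi(\xi)}|\le\bE[e^{-\tau X_1}]$ and concavity of $\tau\mapsto\psi(i\tau)$; (iii) gluing the upper-half-plane analyticity of $\phipq$ to the strip via \eq{whf0}, using that $\phimq$ is the characteristic function of an infinitely divisible law and hence zero-free; (iv) finiteness of $\bE[e^{-\mum\barX_{T_q}}]$ under $q>-\psi(i\mum)$.

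One small comment on organization: in (iv) your bound $|\phipq(\eta+i\tau)|\le\bE[e^{-\tau\barX_{T_q}}]$ presupposes that the analytic continuation of $\phipq$ built in (iii) via the ratio formula coincides with the probabilistic expression $\bE[e^{i\xi\barX_{T_q}}]$ on the strip. This is true, but logically it follows only \emph{after} you have established $\bE[e^{-\mum\barX_{T_q}}]<\infty$ (so that the probabilistic expression is itself analytic on the strip and agrees with $\phipq$ on $\bR$). Your route (b) via the Pecherskii--Rogozin/Spitzer exponential representation does this cleanly and independently of (iii), so the circularity is only apparent; you may wish to reorder the presentation to make this explicit. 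Route (a) can also be made to work, but as you note the endpoint case $\ga=\mum$ rules out a direct $L^p$-Doob argument, so (b) is the cleaner choice.
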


 \begin{lem}\label{phipmq_explicit1}
 Let $\mu_\pm$, $X$ and $q$ satisfy the conditions of Lemma \ref{phipmq_anal_cont}. Then
 \begin{enumerate}[(a)]
 \item
 for any $\omm\in (\mum,\mup)$ and $\xi\in\{\Im\xi>\omm\}$,
 \bbe\label{phip1}
 \phipq(\xi)=\exp\left[\frac{1}{2\pi i}\int_{\Im\eta =\omm}\frac{\xi\ln(1+\psi(\eta)/q)}{\eta(\xi-\eta)}d\eta\right];
 \ee
  \item
 for any $\omp\in (\mum,\mup)$ and $\xi\in\{\Im\xi<\omp\}$,
 \bbe\label{phim1}
 \phimq(\xi)=\exp\left[-\frac{1}{2\pi i}\int_{\Im\eta=\omp}\frac{\xi\ln(1+\psi(\eta)/q)}{\eta(\xi-\eta)}d\eta\right].
 \ee
\end{enumerate}
\end{lem}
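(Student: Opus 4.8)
The plan is to convert the multiplicative Wiener--Hopf factorization \eqref{whf0} into an additive one by taking logarithms, and then to recognize the two claimed integrals as the upper and lower half-plane components of a Cauchy-type (Plemelj--Sokhotski) decomposition of $\ln(q/(q+\psi))$. Set $f(\eta):=\ln(1+\psi(\eta)/q)$, so that \eqref{whf0} reads $-f(\xi)=\ln\phipq(\xi)+\ln\phimq(\xi)$ on $\bR$. First I would check that $f$ is well defined and analytic in a neighbourhood of the lines $\Im\eta=\omm$ and $\Im\eta=\omp$: by Lemma~\ref{phipmq_anal_cont}(i) the exponent $\psi$ continues analytically to the strip, and (for $q$ in the range of Lemma~\ref{phipmq_anal_cont}, i.e. $q\ge\sg$) $q+\psi(\eta)$ stays in a right half-plane bounded away from $0$, so $1+\psi(\eta)/q$ avoids $(-\infty,0]$ and the principal branch of the logarithm applies; the general case $0<\Re q$ then follows by analytic continuation in $q$.

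Two elementary facts drive the construction. Since $\psi(0)=0$ we have $f(0)=0$, so $f(\eta)/\eta$ has a removable singularity at $\eta=0$; and since $|\psi(\eta)|=O(|\eta|^2)$ on a horizontal line, $f(\eta)=O(\ln|\eta|)$ as $|\eta|\to\infty$. The subtracted Cauchy kernel $\dfrac{\xi}{\eta(\xi-\eta)}=\dfrac1\eta-\dfrac1{\eta-\xi}$ is $O(|\eta|^{-2})$, so the integrand in \eqref{phip1} is $O(\ln|\eta|\,|\eta|^{-2})$ and the integral converges absolutely and defines an analytic function $I_+(\xi)$ on $\{\Im\xi>\omm\}$; likewise \eqref{phim1} defines an analytic $-I_-(\xi)$ on $\{\Im\xi<\omp\}$. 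It is precisely to gain this decay, and the normalization $I_\pm(0)=0$, that the factor $\xi/\eta$ is inserted in place of the bare kernel $1/(\eta-\xi)$.

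Next I would compute $I_+-I_-$ on the overlap strip $\omm<\Im\xi<\omp$ by shifting contours. The integrand $\xi f(\eta)/(\eta(\xi-\eta))$ has, between the two lines, a single pole at $\eta=\xi$ (the point $\eta=0$ is not a pole, since $f(0)=0$) with residue $-f(\xi)$. Applying the residue theorem to the rectangle bounded by the two lines and letting the vertical sides tend to infinity (they vanish by the same $O(\ln|\eta|\,|\eta|^{-2})$ bound) gives $I_+(\xi)-I_-(\xi)=-f(\xi)=\ln(q/(q+\psi(\xi)))$. Exponentiating, $e^{I_+(\xi)}e^{-I_-(\xi)}=q/(q+\psi(\xi))$; thus $e^{I_+}$ and $e^{-I_-}$ furnish a Wiener--Hopf factorization of $q/(q+\psi)$ with the correct analytic half-plane structure.

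It remains to identify $e^{I_+}=\phipq$ and $e^{-I_-}=\phimq$, and this uniqueness step is the main obstacle. Both $e^{I_+}$ and $\phipq$ are analytic and zero-free on $\{\Im\xi>\omm\}$, both equal $1$ at $\xi=0$ (indeed $I_+(0)=0$ by the normalization above, while $\phipq(0)=\bE[1]=1$), and both are bounded there --- for $\phipq$ by Lemma~\ref{phipmq_anal_cont}(iv), and for $e^{I_+}$ from the integral bound on $\Re I_+$. Hence $R:=\phipq/e^{I_+}$ is analytic, bounded and zero-free in the upper half-plane, $1/R=\phimq/e^{-I_-}$ is analytic and bounded in the lower half-plane, and the two agree on $\bR$ because both factorizations reproduce $q/(q+\psi)$. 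By analytic continuation they glue to a bounded entire function, which by Liouville is constant, and the constant is $R(0)=1$; therefore $R\equiv1$ and the formulas \eqref{phip1}--\eqref{phim1} follow. The delicate points to nail down are the uniform two-sided control of $\Re I_\pm$ needed for the boundedness in the Liouville step and the branch/well-definedness of $f$ for the full range of $q$, both of which reduce to the growth estimate $f(\eta)=O(\ln|\eta|)$ and Lemma~\ref{phipmq_anal_cont}.
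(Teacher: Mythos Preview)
The paper does not supply its own proof of this lemma; it is stated with the parenthetical ``see, e.g., \cite{NG-MBS,single,paraLaplace,paired,Contrarian,EfficientLevyExtremum}'' and the argument is deferred to those references. Your approach---take logarithms, split $\ln(q/(q+\psi))$ into upper- and lower-half-plane pieces via the subtracted Cauchy kernel $\xi/(\eta(\xi-\eta))$, verify the residue identity $I_+-I_-=-f$, and conclude by a Liouville-type uniqueness argument---is exactly the standard derivation found in those works (in particular in \cite{NG-MBS,barrier-RLPE}). The normalization $f(0)=0$, the $O(\ln|\eta|)$ growth of $f$, and the $O(|\eta|^{-2})$ decay of the subtracted kernel are the right observations, and the residue computation is correct.

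The only point that deserves a closer look is the Liouville step, as you yourself flag. From $|R|\le C_1$ on the upper half-plane and $|1/R|\le C_2$ on the lower half-plane alone one does not get that the entire function $R$ is bounded; you need two-sided control of $\Re I_\pm$ (both above and below) on the respective half-planes, so that $e^{\pm I_+}$ and $e^{\pm I_-}$ are all bounded, and then both $R$ and $1/R$ are bounded entire. This two-sided bound does hold---it follows from the representation $I_+(\xi)=\frac{1}{2\pi i}\int \bigl(\tfrac{1}{\eta}+\tfrac{1}{\xi-\eta}\bigr)f(\eta)\,d\eta$ together with the logarithmic growth of $f$ and a dominated-convergence argument as $\xi\to\infty$ in the half-plane---but it is worth writing out explicitly rather than leaving as a parenthetical. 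With that filled in, your proof is complete and coincides with the one in the cited literature.
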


The integrands above decay very slowly at infinity, hence,  fast and accurate numerical realizations are impossible unless additional tricks
are used. If $X$ is SINH-regular, the rate of decay can be greatly increased 
 using appropriate conformal deformations of the line of integration
 and the corresponding changes of variables. Assuming that in Definition \ref{def:SINH_reg_proc_1D0},  $\ga_\pm$ are not extremely small in absolute value (and, in the case of  regular SL-processes, $\ga_\pm=\pm \pi/2$ are not small), the most efficient change of variables
is the sinh-acceleration 
   \bbe\label{eq:sinh}
 \eta=\chi_{\om_1  ,b,\om}(y)=i\om_1  +b\sinh(i\om+y), 
\ee
where $\om\in (-\pi/2,\pi/2)$, $\om_1  \in \bR, b>0$.
Typically, the sinh-acceleration is the best choice even if $|\ga_\pm|$ are of the order of $10^{-5}$.  The parameters $\om_1  ,b,\om$ are chosen so that the contour $\cL_{\om_1  ,b,\om}:=\chi_{\om_1  ,b,\om}(\bR)\subset i(\mup,\mup)+(\cC_{\gam,\gap}\cup\{0\})$ and, in the process of deformation, $\ln(1+\psi(\eta)/q)$ is a well-defined analytic function on a domain in $\bC$ or  the appropriate Riemann surface.  
We write $\cL^\pm_{\om_1  ,b,\om}$ or $\cL^\pm$ instead $\cL_{\om_1  ,b,\om}$ when we wish to indicate that $\pm \om>0$. Note that the wings of contours of type $\cL^+_{\om_1  ,b,\om}$ (resp., $\cL^-_{\om_1  ,b,\om}$) point upwards (respectively, downwards).
\begin{lem}\label{lem:WHF-SINH}
Let $X$ be SINH-regular of type $((\mum,\mup), \cC_{\gam,\gap}, \cC_{\gampr,\gappr})$. 

Then  there exists $\sg>0$ s.t. for all $q>\sg$, 
\begin{enumerate}[(i)]
\item
$\phipq(\xi)$ admits analytic continuation to $i(\mum,+\infty)+i(\cC_{\pi/2-\gam}\cup\{0\})$. For any $\xi\in i(\mum,+\infty)+i(\cC_{\pi/2-\gam}\cup\{0\})$, and any contour 
$\cL^-_{\om_1  ,b,\om}\subset i(\mum,\mup)+(\cC_{\gam,\gap}\cup\{0\})$ lying below $\xi$,
\bbe\label{phipq_def}
\phipq(\xi)=\exp\left[\frac{1}{2\pi i}\int_{\cL^-_{\om_1  ,b,\om}}\frac{\xi \ln (1+\psi(\eta)/q)}{\eta(\xi-\eta)}d\eta\right];
\ee
\item
$\phimq(\xi)$ admits analytic continuation to $i(-\infty,\mup)-i(\cC_{\pi/2+\gap}\cup\{0\})$. For any $\xi\in i(\-\infty,\mup)-i(\cC_{\pi/2+\gap}\cup\{0\})$, and any contour 
$\cL^+_{\om_1  ,\om,b}\subset i(\mum,\mup)+(\cC_{\gam,\gap}\cup\{0\})$ lying above $\xi$,
\bbe\label{phimq_def}
\phimq(\xi)=\exp\left[-\frac{1}{2\pi i}\int_{\cL^+_{\om_1  ,\om,b}}\frac{\xi \ln (1+\psi(\eta)/q)}{\eta(\xi-\eta)}d\eta\right].
\ee
\end{enumerate}
\end{lem}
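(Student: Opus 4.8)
The plan is to obtain \eq{phipq_def} and \eq{phimq_def} from the horizontal-line representations \eq{phip1} and \eq{phim1} of \lemm{phipmq_explicit1} by deforming the contour of integration onto the sinh-contour, and then to read off the analytic continuation in $\xi$ from the resulting formula. I will treat $\phipq$ in detail; part (ii) for $\phimq$ follows by the mirror argument (reflecting $\eta\mapsto-\eta$, equivalently working in the lower half-plane starting from \eq{phim1} and deforming onto a contour $\cL^+_{\om_1,\om,b}$ lying above $\xi$). Fix $q$ larger than the threshold $\sg$ of \lemm{phipmq_anal_cont}, enlarged if necessary as described below, and start from \eq{phip1} with a horizontal contour $\Im\eta=\omm$, $\omm\in(\mum,0)$, valid for $\Im\xi>\omm$.

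The first step is to locate the singularities of the integrand $g(\eta)=\frac{\xi\ln(1+\psi(\eta)/q)}{\eta(\xi-\eta)}$ in the region swept out during the deformation. By Definition \ref{def:SINH_reg_proc_1D0}(iii), $\psi$ extends analytically to $i(\mum,\mup)+(\cC_{\gam,\gap}\cup\{0\})$, so it remains to check three points: (1) that $\ln(1+\psi(\eta)/q)$ has a single-valued analytic branch; (2) that the pole at $\eta=\xi$ is not crossed; (3) that the pole of $1/\eta$ at $\eta=0$ is harmless. Point (3) is immediate, since $\psi(0)=0$ forces $\ln(1+\psi(\eta)/q)$ to vanish to first order at $\eta=0$, so $g$ is in fact analytic there. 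Point (2) is ensured by considering only contours $\cL^-_{\om_1,b,\om}$ lying below $\xi$ and deforming through contours that stay below $\xi$, so that $\eta=\xi$ is never met.

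The heart of the matter is point (1) together with the decay needed to discard the arcs at infinity. For $|\eta|$ bounded, enlarging $q$ makes $1+\psi(\eta)/q$ as close to $1$ as we wish, hence off the cut $(-\infty,0]$; for $|\eta|\to\infty$ along a ray $e^{i\varphi}$, $\varphi\in(\gam,\gap)$, condition \eq{asympsisRLPE} gives $\psi^0(\eta)\sim c_\infty(\varphi)|\eta|^\nu$ with $c_\infty(\varphi)\in\bC\setminus(-\infty,0]$ depending continuously on $\varphi$ (Definition \ref{def:SINH_reg_proc_1D0}(iv),(v)); together with the drift $-i\mu\eta$, whose asymptotic direction on the cone also avoids $(-\infty,0]$, this keeps $1+\psi(\eta)/q$ in a fixed angular sector bounded away from the cut, uniformly for large $|\eta|$. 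Consequently the principal branch used on the line $\Im\eta=\omm$ continues single-valuedly across the (simply connected) region between the two contours. The same asymptotics give $\ln(1+\psi(\eta)/q)=O(\ln|\eta|)$, and, using $1/(\eta(\xi-\eta))=O(|\eta|^{-2})$ and \lemm{lem:der_psi0} to control intermediate growth, $g(\eta)=O(|\eta|^{-2}\ln|\eta|)$ along both contours; the connecting arcs at radius $R$ therefore contribute $O(R^{-1}\ln R)\to0$. By Cauchy's theorem the integral over $\Im\eta=\omm$ equals the integral over $\cL^-_{\om_1,b,\om}$, which is \eq{phipq_def}.

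Finally, the analytic continuation is read off from \eq{phipq_def}: with $\cL^-_{\om_1,b,\om}$ fixed, the right-hand integral is analytic in $\xi$ on the upper component of $\bC\setminus\cL^-_{\om_1,b,\om}$ (the pole $\eta=\xi$ being avoided there), and by uniqueness of analytic continuation it agrees with $\phipq$ wherever both are defined. Letting the parameters $(\om_1,b,\om)$ with $\om\in(\gam,0)$ range over all admissible values, so that the downward wings of $\cL^-$ sweep toward the directions $e^{i\gam}$ and $e^{i(\pi-\gam)}$, the union of the corresponding upper regions is exactly $i(\mum,+\infty)+i(\cC_{\pi/2-\gam}\cup\{0\})$, which yields the asserted domain. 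The main obstacle is point (1): guaranteeing that $1+\psi(\eta)/q$ never meets the branch cut over the whole unbounded deformation region. This is precisely where the hypotheses $c_\infty(\varphi)\in\bC\setminus(-\infty,0]$ and the continuity of $\varphi\mapsto c_\infty(\varphi)$ are essential, and where extra care is required when $\nu<1$, so that the drift term dominates the asymptotics and its direction on the cone must itself be checked to avoid $(-\infty,0]$.
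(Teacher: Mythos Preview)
The paper states \lemm{lem:WHF-SINH} without proof, treating it as a direct consequence of \lemm{phipmq_explicit1} via contour deformation (the lemma is followed only by the sentence ``The integrals are efficiently evaluated making the change of variables $\eta=\chi_{\om_1,b,\om}(y)$ and applying the simplified trapezoid rule'' and by Remark~\ref{rem:SL-WHF}); detailed arguments of this type appear in the cited works \cite{SINHregular,paired,Contrarian,EfficientLevyExtremum,EfficientAmenable}. Your proposal follows precisely this route---deform the horizontal line in \eq{phip1} onto $\cL^-_{\om_1,b,\om}$, verify that no singularities are crossed and that the arc contributions vanish, then let the contour parameters vary to exhaust the claimed domain of analyticity in $\xi$---and the argument you give is correct. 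Your identification of the three potential obstructions (branch of the logarithm, the pole at $\eta=\xi$, the removable singularity at $\eta=0$) and of the $\nu<1$, $\mu\neq0$ subtlety matches exactly what Remark~\ref{rem:SL-WHF} and \lemm{lem:atoms} flag as the delicate point, so your proof is both correct and aligned with the paper's implicit reasoning.
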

The integrals are efficiently evaluated making the change of variables $\eta=\chi_{\om_1,b,\om}(y)$ and applying the simplified trapezoid rule.

\begin{rem}\label{rem:SL-WHF}{\rm 
In the process of deformation, the expression $1+\psi(\eta)/q$ may not assume value zero. In order to avoid complications stemming from analytic continuation to an appropriate Riemann surface, it is advisable to ensure that $1+\psi(\eta)/q\not\in(-\infty,0]$.
 Thus, if $q>0$ - and only positive $q$'s are used in the Gaver-Stehfest method or GWR algorithm - and $X$ is an SL-process, 
any $\om\in (0,\pi/2)$ is admissible in \eq{phipq_def}, and any $\om\in (-\pi/2,0)$ is admissible in \eq{phimq_def}. 
If the sinh-acceleration is applied to the Bromwich integral, then additional conditions on $\om$ must be imposed. See 
Sect. \ref{ss:anal_cont_q}.
}
\end{rem}
\subsection{Decomposition of the Wiener-Hopf factors}\label{WHF_decomp}
In the remaining part of the paper, we assume that the Wiener-Hopf factors $\phi^\pm_q(\xi), q>0,$ admit
the representations $\phi^\pm_q(\xi)=a^\pm_q+\phi^{\pm\pm}_q(\xi)$ and $\cE^\pm_q=a^\pm_qI+\cE^{\pm\pm}_q$,
where $a^\pm_q\ge 0$, and $\phi^{\pm\pm}_q(\xi)$ satisfy the bounds
\beqa\label{WHFdecayP}
 |\phi^{++}_q(\xi)|&\le & C_+(q)(1+|\xi|)^{-\nup},\ \Im \xi\ge \mum,\\
 \label{WHFdecayM}
 |\phi^{--}_q(\xi)|&\le & C_+(q)(1+|\xi|)^{-\num},\ \Im \xi\le \mup,
 \eqa
 where $\nu_\pm>0$ and $C_\pm(q)>0$ are independent of $\xi$. These conditions are satisfied for all popular classes of L\'evy processes
 bar the driftless Variance Gamma model.

The following more detailed properties of the Wiener-Hopf factors are established in \cite{NG-MBS,BLSIAM02,barrier-RLPE}
for the class of RLPE (Regular L\'evy processes of exponential type); the proof for SINH-regular processes is the same only $\xi$ is allowed to tend to $\infty$ not only in the strip of analyticity but in the union of a strip and cone. See \cite{BIL,asymp-sens,paired} for the proof of the statements below for
several classes of SINH-regular processes (the definition of the SINH-regular processes formalizing properties used in \cite{BIL,asymp-sens,paired} was suggested in \cite{SINHregular} later). The contours  in Lemma \ref{lem:atoms} below
are in a domain of analyticity s.t. $q-i\mu\xi\neq 0$ and $1+\psi^0(\xi)/(q-i\mu\xi)\not\in (-\infty,0]$. These restrictions on the contours are needed when $\psi^0(\xi)=O(|\xi|^\nu)$ as $\xi\to\infty$ in the domain of analyticity and $\nu<1$. Clearly, in this case, for sufficiently large $q>0$, the condition holds.
If $X$ is an RLPE but not SINH-regular, the contours of integration in the lemma below are straight lines in the strip of analyticity. 

\begin{lem}\label{lem:atoms}
Let $\mum<0<\mup$,  $q>0$, let $X$ be SINH-regular  of  type 
$((\mum,\mup), \cC_{\gam,\gap}, \cC_{\gampr,\gappr})$, and order $\nu$.  Then
\begin{enumerate}[(1)]
\item
if  $\nu\in [1,2]$ or $\nu\in (0,1)$ and the drift  $\mu=0$, then neither
 $\barX_{T_q}$ nor $\uX_{T_q}$ has an atom at 0, and $\phi^\pm_q(\xi)$ admit the bounds \eq{WHFdecayP} and 
 \eq{WHFdecayM}, 
 where $\nu_\pm>0$ and $C_\pm(q)>0$ are independent of $\xi$;
 \item
  if  $\nu\in [0+,1)$  and $\mu>0$, then
 \begin{enumerate}[(a)]
 \item
 $\barX_{T_q}$ has no atom at 0 and $\uX_{T_q}$ has an atom  $a^-_q\de_0$ at zero, where
 \bbe\label{eq:cmqp}
a^-_q=\exp\left[-\frac{1}{2\pi i}\int_{\cL^+_{\om_1  ,b,\om}}\frac{\ln((1+\psi^0(\eta)/(q-i\mu\eta))}{\eta}d\eta\right],
\ee
and $\cL^+_{\om_1  ,b,\om}$ is a contour as in Lemma \ref{lem:WHF-SINH} (ii); 
 \item
for $\xi$ and $\cL^-_{\om_1  ,b,\om}$ in Lemma \ref{lem:WHF-SINH} (i), $\phipq(\xi)$ admits the representation
 \bbe\label{phip1finvar}
 \phipq(\xi)=\frac{q}{q-i\mu\xi}\exp\left[\frac{1}{2\pi i}\int_{\cL^-_{\om_1  ,b,\om}}\frac{\xi\ln(1+\psi^0(\eta)/(q-i\mu\eta))}{\eta(\xi-\eta)}d\eta\right],
 \ee
and  satisfies the bound \eq{WHFdecayP} with $\nup=1$;
 \item
 $\phimq(\xi)=a^-_q+\phi^{--}_q(\xi)$, where $\phi^{--}_q(\xi)$ satisfies 
 \eq{WHFdecayM} with arbitrary $\num\in (0,1-\nu)$;
 \item
 $\cEmq=a^-_qI+\cE^{--}_q$, where $\cE^{--}_q$ is the pdo with the symbol $\phi^{--}_q(\xi)$;
 \end{enumerate}
  \item
  if  $\nu\in [0+,1)$  and $\mu<0$, then
 \begin{enumerate}[(a)]
 \item
 $\uX_{T_q}$ has no atom at 0 and $\barX_{T_q}$ has an atom $a^+_q\de_0$ at zero, where
 \bbe\label{eq:cpqp}
a^+_q=\exp\left[\frac{1}{2\pi i}\int_{\cL^-_{\om_1  ,b,\om}}\frac{\ln((1+\psi^0(\eta)/(q-i\mu\eta))}{\eta}d\eta\right],
\ee
and $\cL^-_{\om_1  ,b,\om}$ is a contour as in Lemma \ref{lem:WHF-SINH} (i);
 \item
for $\xi$ and $\cL^+_{\om_1  ,b,\om}$ in Lemma \ref{lem:WHF-SINH} (ii), $\phimq(\xi)$ admits the representation
 \bbe\label{phim1finvar}
 \phimq(\xi)=\frac{q}{q-i\mu\xi}\exp\left[-\frac{1}{2\pi i}\int_{\cL^+_{\om_1  ,b,\om}}\frac{\xi\ln(1+\psi^0(\eta)/(q-i\mu\eta))}{\eta(\xi-\eta)}d\eta\right],
 \ee
and
 satisfies the bound \eq{WHFdecayM} with $\num=1$;
 \item
 $\phipq(\xi)=a^+_q+\phi^{++}_q(\xi)$, where $\phi^{++}_q(\xi)$ satisfies  
 \eq{WHFdecayP} with arbitrary $\nup\in (0,1-\nu)$;
\item
$\cEpq=a^+_qI+\cE^{++}_q$, where $\cE^{++}_q$ is the pdo with the symbol $\phi^{++}_q(\xi)$.
 \end{enumerate}
 \end{enumerate}
\end{lem}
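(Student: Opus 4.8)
The plan is to reduce everything to the integral representations of \lemm{lem:WHF-SINH} together with the bounds of \lemm{lem:der_psi0}, the only genuinely new ingredient being the separation of the linear drift term in the regime $\nu<1$, $\mu\neq 0$. Throughout I use the factorization
\bbe
q+\psi(\xi)=(q-i\mu\xi)\left(1+\frac{\psi^0(\xi)}{q-i\mu\xi}\right),
\ee
which for $q>0$ large is nonvanishing on the strip-and-cone of analyticity because $\psi^0(\xi)=O(|\xi|^\nu)$ by \lemm{lem:der_psi0}, so the second factor tends to $1$ while the first is linear. In case (1), where $\nu\ge 1$ (or $\mu=0$, in which case $\psi=\psi^0$), the term $\psi^0$ controls the symbol as $\xi\to\infty$ and no separation is needed: inserting the bounds of \lemm{lem:der_psi0} into \eq{phipq_def}, \eq{phimq_def} and differentiating under the integral sign yields \eq{WHFdecayP}, \eq{WHFdecayM} with some $\nu_\pm>0$, while the absence of atoms is equivalent to $\phi^\pm_q(\xi)\to 0$ as $\xi\to\infty$ in the respective half-plane, which the same bounds deliver.

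For cases (2) and (3) I would first decide where the drift factor lands. Taking logarithms in the displayed factorization splits the integrand of \eq{phipq_def}--\eq{phimq_def} as
\bbe
\ln\!\left(1+\frac{\psi(\eta)}{q}\right)=\ln\!\left(1-\frac{i\mu\eta}{q}\right)+\ln\!\left(1+\frac{\psi^0(\eta)}{q-i\mu\eta}\right).
\ee
For $\mu>0$ (case (2)) the zero of $1-i\mu\eta/q$ sits at $\eta=-iq/\mu$ in the lower half-plane, so $\ln(1-i\mu\eta/q)$ is analytic and bounded in $\{\Im\eta>\mum\}$; it contributes a one-sided (``$+$'') factor, namely the elementary $q/(q-i\mu\xi)=\bE[e^{i\mu T_q\xi}]$, which passes entirely into $\phipq$. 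Feeding this back into the representation of $\phipq$ gives \eq{phip1finvar}, and since $q/(q-i\mu\xi)$ decays like $|\xi|^{-1}$ while the remaining exponential is bounded (its integrand $\ln(1+\psi^0/(q-i\mu\eta))$ is $O(|\eta|^{\nu-1})$, hence integrable against the kernel), one obtains \eq{WHFdecayP} with $\nup=1$; this is part (b). Case (3), $\mu<0$, is the mirror image: the drift pole lies in the upper half-plane, the factor lands in $\phimq$, and one gets \eq{phim1finvar} and \eq{WHFdecayM} with $\num=1$.

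The atom and the residual decay of the opposite factor come from the behaviour at infinity. Still with $\mu>0$, the factor $\phimq$ carries only the regularized integral, and using $\xi/(\eta(\xi-\eta))\to 1/\eta$ together with dominated convergence (legitimate because $\ln(1+\psi^0/(q-i\mu\eta))=O(|\eta|^{\nu-1})$ makes the integrand decay like $|\eta|^{\nu-2}$ along the contour, while $\om_1\neq 0$ keeps $1/\eta$ bounded near $0$) yields $\lim_{\xi\to\infty}\phimq(\xi)=a^-_q$ with $a^-_q$ as in \eq{eq:cmqp}. A nonzero limit of the characteristic function $\bE[e^{i\uX_{T_q}\xi}]$ as $\xi\to\infty$ in $\{\Im\xi\le 0\}$ forces, by a Riemann-Lebesgue argument, a point mass of $\uX_{T_q}$; since $\uX_{T_q}\le 0$ and only the positive drift can pin the infimum at its starting value, the mass sits at $0$, which is part (a). For the rate, subtract the limit: writing $\xi/(\eta(\xi-\eta))-1/\eta=1/(\xi-\eta)$ gives
\bbe
\ln\frac{\phimq(\xi)}{a^-_q}=-\frac{1}{2\pi i}\int_{\cL^+_{\om_1,b,\om}}\frac{\ln(1+\psi^0(\eta)/(q-i\mu\eta))}{\xi-\eta}\,d\eta,
\ee
and a scaling estimate of this Cauchy-type integral, whose integrand is of size $|\eta|^{\nu-1}$, gives $O(|\xi|^{-(1-\nu)})$, hence \eq{WHFdecayM} for any $\num\in(0,1-\nu)$; this is part (c). Part (d) is then immediate, $\cEmq$ being the pdo with symbol $a^-_q+\phi^{--}_q$. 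Case (3) is the reflection $\mu\mapsto-\mu$, $\barX\leftrightarrow\uX$.

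The main obstacle I expect is not any single identity but the uniform decay estimates along the sinh-contour once $\xi$ is allowed to run to infinity inside the enlarged domain (strip $\cup$ cone) rather than a strip: one must control $\ln(1+\psi^0(\eta)/(q-i\mu\eta))$ and the Cauchy kernel simultaneously, uniformly in the direction of $\xi$, to justify both the dominated-convergence step defining $a^-_q$ and the $|\xi|^{-(1-\nu)}$ bound. These bounds follow from \lemm{lem:der_psi0}, which is exactly why the asymptotics are needed in the full cone $\cC_{\gam,\gap}$ and not merely on the real axis; this is the place where the SINH-regular argument genuinely departs from the RLPE proof of \cite{NG-MBS,BLSIAM02,barrier-RLPE}.
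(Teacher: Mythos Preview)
Your proposal is correct and follows precisely the approach of the references \cite{NG-MBS,BLSIAM02,barrier-RLPE,BIL,asymp-sens,paired} to which the paper defers in lieu of a proof: the additive splitting $\ln(1+\psi/q)=\ln(1-i\mu\eta/q)+\ln(1+\psi^0/(q-i\mu\eta))$, the allocation of the drift factor $q/(q-i\mu\xi)$ to the ``correct'' Wiener--Hopf factor according to the sign of $\mu$, and the identification of the atom via $\lim_{\xi\to\infty}\phimq(\xi)$ using $\xi/(\eta(\xi-\eta))\to 1/\eta$ and dominated convergence are exactly the steps in those works. Your closing remark is also on target: the paper explicitly notes that the only difference from the RLPE argument is that $\xi$ runs to infinity in a strip-plus-cone rather than a strip, which is handled by the uniform bounds of \lemm{lem:der_psi0}.
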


\subsection{Analytic continuation w.r.t. $q$}\label{ss:anal_cont_q} It is easy to see that,
given a strip of analyticity $S_{[\mu'_-,\mu'_+]}$, $\mu'_-<0<\mu'_+$, of $\psi$, one can find  $\sg_0>0$ such that 
 both $\phipq(\xi)$ and $\phimq(\xi)$ are uniformly bounded on $\cU(\sg; \mu'_-,\mu'_+):=\{\Re q\ge \sg_0, \Im\xi\in [\mu'_-,\mu'_+]\}$,
 $\Re(q+\psi(\xi))$ is positive on $\cU(\sg;  \mu'_-,\mu'_+)$, and the functions in the formulas for $\phi^\pm_q(\xi)$ above, with flat contours of integration,  admit analytic 
 continuation to $\cU(\sg; \mu'_-,\mu'_+)$. Therefore, any formula for the Laplace transforms $\tV(q)$ of  expectations of random variables which we use or derive below in terms of the Wiener-Hopf factors can be used to define analytic continuation of 
 $\tV(q)$ to the half-plane $\{\Re q\ge \sg_0\}$ and use the Bromwich integral
 to recover $V$: for any $T>0$ and $\sg\ge \sg_0$, \eq{Brom} holds.
 In order that a deformation of the contour in \eq{Brom} can be justified, and a numerical procedure for the evaluation of the Bromwich integral \eq{Brom} be reliable, it is necessary that
$\tV(q)$ decay sufficiently fast as $q\to\infty$ along the contour of integration. For the proof, we use the integration by parts in the formula
for $\tV(q)$ and the following lemma. First, we consider the case $\nu\in [1,2]$ or $\nu\in (0,1)$ and $\mu=0$. If $\nu\neq 1$,
set $\nu_\pm=\nu/2$. If $\nu=1$, we assume that $c_{\infty}(\varphi)$ in \eq{asympsisRLPE} is of the form
$c_\infty e^{i\varphi}$, then $(\nup,\num)$ is the solution of the system $\nup+\num=1, \nup-\num=(2/\pi)\arctan(\mu/c_\infty)$.

\begin{lem}\label{lem:der_WHF_q_SINH}
Let $X$ be a SINH-regular process. Let either the order $\nu\in [1,2]$ or  $\nu\in (0,1)$ and the drift is 0. Then
there exist $(\mu'_-,\mu'_+)\subset(\mum,\mup)$, $\mu'_-<\mu'_+$, a cone $\cC_{\ga'_-,\ga'_+}$, $\ga'_-<0<\ga'_+$, and $\sg_0>0, \om_L\in (0,\pi/2)$ such that
\begin{enumerate}[(a)]
\item
for all $q\in \sg_0+\cC_{\pi/2+\om_L}$ and $\xi\in i(\mu'_-,\mu'_+)+(\cC_{\ga'_-,\ga'_+}\cup \{0\})$,
\bbe\label{good_bound_psiq}
q+\psi(\xi)\not\in  (-\infty,0];
\ee
\item
$\phipq(\xi)$ admits analytic continuation to $(\sg_0+\cC_{\pi/2+\om_L})\times (i(\mu,+\infty)+i(\cC_{\pi/2-\ga'_-}\cup\{0\}))$
and obeys the bounds
\beqa\label{good_bound_phipq}
|\phipq(\xi)|&\le& C_+(|q|^{1/\nu}+|\xi|)^{-\nup},\\\label{good_bound_phipq_der}
|\dd_q^m\dd^n_\xi\phipq(\xi)|&\le& C_{+,m,n}(|q|^{1/\nu}+|\xi|)^{-\nup}|q|^{-m}(1+|\xi|)^{-n},\ n,m\in \bZ_+,
\eqa
where $C_+, C_{+,m,n}$ are independent of $q,\xi$;
\item
$\phimq(\xi)$ admits analytic continuation to $(\sg_0+\cC_{\pi/2+\om_L})\times (i(-\infty,\mup)-i(\cC_{\pi/2+\ga'_+}\cup\{0\}))$
and obeys the bounds
\beqa\label{good_bound_phimq}
|\phipq(\xi)|&\le &C_-(|q|^{1/\nu}+|\xi|)^{-\num},\\\label{good_bound_phimq_der}
|\dd_q^m\dd^n_\xi\phipq(\xi)|&\le & C_{-,m,n}(|q|^{1/\nu}+|\xi|)^{-\num}|q|^{-m}(1+|\xi|)^{-n},
\eqa
where $C_+, C_{+,m,n}$ are independent of $q,\xi$.
\end{enumerate}

\end{lem}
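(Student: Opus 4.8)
The plan is to prove (a) first and then read off (b)--(c) from the integral representations of \lemm{lem:WHF-SINH}. Fix $[\mu'_-,\mu'_+]\subset(\mum,\mup)$ and $[\ga'_-,\ga'_+]\subset(\gampr,\gappr)$, to be shrunk finitely often, and set $\cD=i(\mu'_-,\mu'_+)+(\cC_{\ga'_-,\ga'_+}\cup\{0\})$. Using \eqref{asympsisRLPE}, the continuity of $\varphi\mapsto c_\infty(\varphi)$ and $\Re c_\infty(\varphi)>0$ on $[\ga'_-,\ga'_+]$, I would first produce $\delta_0>0$, $R>0$ and $M>0$ such that $|\arg\psi(\xi)|\le\pi/2-\delta_0$ for $\xi\in\cD$, $|\xi|\ge R$, while $|\psi(\xi)|\le M$ for $\xi\in\cD$, $|\xi|\le R$. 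For $\nu>1$ the drift $-i\mu\xi$ is of lower order and is absorbed into the asymptotics; for $\nu\in(0,1)$ it is absent; for $\nu=1$ it is incorporated into the leading coefficient through the normalisation $c_\infty(\varphi)=c_\infty e^{i\varphi}$, which is exactly what fixes the split $(\nup,\num)$ below, and \lemm{lem:der_psi0} guarantees that \eqref{asympsisRLPE} is stable across $\cD$. With $\delta_0$ in hand, set $\om_L<\delta_0$ and $\sg_0>M$; I claim $q+\psi(\xi)\notin(-\infty,0]$ on $(\sg_0+\cC_{\pi/2+\om_L})\times\cD$. Arguing by contradiction, suppose $q+\psi(\xi)=-t$ with $t\ge0$. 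If $|\psi(\xi)|\le M$, then $\Re q=-t-\Re\psi(\xi)\le M<\sg_0$ forces $q$ far out along the cone, making $|\Im q|$ so large that $\Im(q+\psi(\xi))\ne0$, a contradiction. If instead $|\arg\psi(\xi)|\le\pi/2-\delta_0$, then $\Re\psi(\xi)\ge|\Im\psi(\xi)|\tan\delta_0$, so $\Re q\le-\Re\psi(\xi)\le-|\Im\psi(\xi)|\tan\delta_0$; on the other hand $q\in\sg_0+\cC_{\pi/2+\om_L}$ together with $\Im q=-\Im\psi(\xi)$ gives $\Re q\ge\sg_0-|\Im\psi(\xi)|\tan\om_L$, whence $\sg_0\le|\Im\psi(\xi)|(\tan\om_L-\tan\delta_0)\le0$ since $\om_L<\delta_0$. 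This proves (a).

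For (b) and (c), note that (a) makes $1+\psi(\eta)/q\notin(-\infty,0]$ for $(q,\eta)\in(\sg_0+\cC_{\pi/2+\om_L})\times\cD$, so $\ln(1+\psi(\eta)/q)$ is a single-valued holomorphic function of $(q,\eta)$ there. Taking a contour $\cL^-_{\om_1,b,\om}\subset\cD$ lying below $\xi$ exactly as in \lemm{lem:WHF-SINH}(i), the integrand in \eqref{phipq_def} is jointly analytic in $(q,\xi)$, and the integral converges because $\ln(1+\psi(\eta)/q)=O(\ln|\eta|)$ while the kernel $\xi/(\eta(\xi-\eta))=O(|\eta|^{-2})$. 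Differentiation under the integral sign (or Morera in each variable) then yields the analytic continuation of $\phipq$ to $(\sg_0+\cC_{\pi/2+\om_L})\times(i(\mu,+\infty)+i(\cC_{\pi/2-\ga'_-}\cup\{0\}))$, the contours being glued by analyticity as $\xi$ varies; the case of $\phimq$ via \eqref{phimq_def} is the mirror image.

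The substantive step is the bounds. Here I would exploit the approximate scale invariance by choosing $\cL^-$ to pass at scale $|q|^{1/\nu}$ (equivalently, substituting $\eta=|q|^{1/\nu}\zeta$) and splitting the integral at $|\eta|\sim|q|^{1/\nu}$: on $|\eta|\lesssim|q|^{1/\nu}$ one has $|\psi(\eta)/q|\lesssim1$ and $|\ln(1+\psi(\eta)/q)|\lesssim|\eta|^\nu/|q|$ by \lemm{lem:der_psi0}, while on $|\eta|\gtrsim|q|^{1/\nu}$ one has $\ln(1+\psi(\eta)/q)=\nu\ln|\eta|-\ln|q|+O(1)$. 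Inserting these into \eqref{phipq_def} and evaluating the resulting kernel integrals gives
\[
\ln|\phipq(\xi)|=\nup\ln\frac{|q|^{1/\nu}}{|q|^{1/\nu}+|\xi|}+O(1),
\]
where $\nup$ is the share of $\nu$ carried by the $+$-factor: $\nup=\nu/2$ for $\nu\ne1$, and $\nup=\tfrac12+\tfrac1\pi\arctan(\mu/c_\infty)$ for $\nu=1$, obtained by tracking the total change of $\arg(1+\psi(\eta)/q)$ along $\cL^-$. Exponentiating, and noting that at $\xi=0$ the exponent integral vanishes identically, gives \eqref{good_bound_phipq}; for large $\xi$ this recovers \eqref{WHFdecayP}. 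For \eqref{good_bound_phipq_der}, differentiate \eqref{phipq_def} under the integral: $\dd_q$ turns $\ln(1+\psi/q)$ into $-\psi(\eta)/(q(q+\psi(\eta)))$, improving the estimate by a factor $|q|^{-1}$, while each $\dd_\xi$ acts on the rational kernel and gains a factor $(1+|\xi|)^{-1}$; the same split then delivers the stated powers, and \eqref{good_bound_phimq} follows symmetrically from \eqref{phimq_def}. The main obstacle is precisely this step: securing the sharp index $\nup$ (resp.\ $\num$) and the correct joint $|q|^{1/\nu}$-scaling uniformly across the derivatives, where controlling the $O(1)$ remainders uniformly along the deformed contour, and handling the asymmetric split in the borderline case $\nu=1$ with nonzero drift, is where the real work lies; part (a) and the analyticity are comparatively routine once SINH-regularity is invoked.
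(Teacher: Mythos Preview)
Your argument for (a) and for the analytic continuation is sound and considerably more explicit than the paper's own proof, which simply cites \cite{NG-MBS,barrier-RLPE,BIL,paired,Contrarian,EfficientLevyExtremum} for \eqref{good_bound_psiq}, \eqref{good_bound_phipq}, \eqref{good_bound_phimq}. (A minor point: in your Case~1 for (a), the choice $\sg_0>M$ alone does not suffice; you need something like $\sg_0>M(1+\tan\om_L)$ so that $\Re q\le M$ forces $|\Im q|>M$. This is easily repaired.)

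Where your route diverges substantially from the paper is in the derivative bounds \eqref{good_bound_phipq_der}, \eqref{good_bound_phimq_der}. You propose to differentiate the integral representation \eqref{phipq_def} under the integral sign and then redo the scale-splitting estimate for each $(m,n)$, and you correctly flag this as ``where the real work lies.'' The paper bypasses this work entirely: once analyticity and the undifferentiated bound \eqref{good_bound_phipq} are in hand on a domain of the stated shape, the derivative bounds follow \emph{immediately} from the Cauchy integral formula. Indeed, at any $(q_0,\xi_0)$ in the domain one can fit a polydisc of polyradius $(c|q_0|,\,c(1+|\xi_0|))$ inside a slightly enlarged domain of the same type (this is precisely what the cone-plus-strip geometry buys you), and on that polydisc $|\phipq(\xi)|$ differs from $(|q_0|^{1/\nu}+|\xi_0|)^{-\nup}$ only by a bounded factor; the Cauchy estimate then produces the extra powers $|q_0|^{-m}(1+|\xi_0|)^{-n}$ automatically. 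This is the same mechanism as in \lemm{lem:der_psi0}. So your direct approach is not wrong, but it turns the easiest part of the lemma into the hardest, and the obstacle you identify is an artifact of the method rather than a genuine difficulty.
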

 \begin{proof}
 Eq. \eq{good_bound_psiq}, \eq{good_bound_phipq} and \eq{good_bound_phimq} were derived and used in \cite{NG-MBS,barrier-RLPE,BIL} for $\xi$ in a strip, and in \cite{paired} for
 $\xi$ in a union of a strip and cone. The bounds  \eq{good_bound_phipq} and \eq{good_bound_phimq} (in a less explicit form)
 are proved and used in \cite{Contrarian,EfficientLevyExtremum}.
 The bounds \eq{good_bound_phipq_der} and \eq{good_bound_phimq_der} are 
 immediate from \eq{good_bound_phipq}, \eq{good_bound_phimq} and the
Cauchy integral formula.

 \end{proof}

\section{Evaluation and regularity of  $V_{f.t.d.}(h,t)$, $V_2(a_1,h; T)$, and  $\dd_hV_2(a_1,h; T)$}\label{s: basic ingredients and regularity}

\subsection{Representations and numerical evaluation of $V_{f.t.d.}(h,t)$}\label{ss:Vftd}
Let $h>0$ and let $\psi$ admit analytic continuation to $S_{(\mum,0]}$. Then, for any $\omm\in (\mum,0)$ and sufficiently large
$\sg$, 
\beqa\label{V_ftd}
V_{f.t.d.}(h,t)&=&\frac{1}{2\pi i}\int_{\Re q=\sg}dq\frac{e^{qt}}{q}\frac{1}{2\pi}\int_{\Im\xi=\omm}d\xi\,e^{-i\xi h}\frac{\phipq(\xi)}{i\xi}
\\\label{V_ftd2}
&=&\frac{1}{2\pi i}\int_{\Re q=\sg}dq\frac{e^{qt}}{q}\frac{1}{2\pi}\int_{\Im\xi=\omm}d\xi\,e^{-i\xi h}\frac{\phi^{++}_q(\xi)}{i\xi}.
\eqa
If $\phipq(\xi)\neq \phi^{++}_q(\xi)$, it is advantageous to use \eq{V_ftd2} rather than \eq{V_ftd} because on the strength of \eq{WHFdecayP},
the integral on the RHS of \eq{V_ftd2} is absolutely convergent whereas the one on the RHS of \eq{V_ftd} is not.
 If $X$ satisfies the conditions of Lemma \ref{lem:der_WHF_q_SINH}, we can deform the inner contour into a contour of the form $\cL^-_{\om_1  ,b,\om}$:
  \bbe\label{V_ftd_sinh}
V_{f.t.d.}(h,t)=\frac{1}{2\pi i}\int_{\Re q=\sg}\frac{e^{qt}}{q}\frac{1}{2\pi}\int_{\cL^-_{\om_1  ,b,\om}}e^{-i\xi h}\frac{\phi^{++}_q(\xi)}{i\xi}d\xi,
\ee
make the corresponding sinh-change of variables,
 and apply the simplified trapezoid rule. For each $q$ used in a numerical method for the evaluation of the Bromwich integral, 
the error tolerance of the order of E-12-E-13 can be satisfied using the simplified trapezoid rule with
 150-300 terms (the number depends on the properties of $\psi$, the opening angle of the sector of analyticity especially).

 To calculate the outer integral, we apply the sinh-acceleration or  
 summation by parts in the infinite trapezoid rule and truncate the sum. The error tolerance of
 the order of E-12 (resp., E-14) can be satisfied using a truncated sum with 150-200 (resp., 200-250) terms. We can also apply the GWR algorithm with $2M=16$ terms but then the best accuracy that can be achieved is of the order of E-07 unless high precision arithmetics and $2M>16$ are used.
 
Thus,  if the order of the process $\nu \in [1,2]$ or $\nu\in (0,1)$ and $\mu=0$, we recommend to apply the sinh-acceleration to the outer integral as well:
  \bbe\label{V_ftd_sinh_sinh}
V_{f.t.d.}(h,t)=\frac{1}{2\pi i}\int_{\cL^L_{\sg,b_\ell,\om_\ell}}dq\frac{e^{qt}}{q}\frac{1}{2\pi}
\int_{\cL^-_{\om_1  ,b,\om}}d\xi\,e^{-i\xi h}\frac{\phi^{++}_q(\xi)}{i\xi},
\ee
 where $\cL^L_{\sg,b_\ell,\om_\ell}=\chi^L_{\sg,b_\ell,\om_\ell}(\bR)$ and $\chi^L_{\sg,b_\ell,\om_\ell}(y)=\sg+ib_\ell\sinh(i\om_\ell+y)$.
 The parameters are chosen so that, for all $(q,\xi)$ arising in the process of deformations, $q+\psi(\xi)\not\in (-\infty,0]$. 
If $\nu\in (1,2]$ or $\nu\in (0,1]$ and $\mu=0$, the crucial parameters $\gam<\om<0$ and $0<\om_\ell<\pi/2$  must satisfy $\max\{1,\nu\}|\om|<\pi/2-\om_\ell$ (if $\nu=1$ and $\mu\neq 0$, the condition is more involved). 
If  $\max\{1,\nu\}|\om|<\pi/2-\om_\ell$,
it is straightforward to show that there exist $\om_1  ,b, \sg, b_\ell$ such that for all $(q,\xi)$ arising in the process of deformations, $q+\psi(\xi)\not\in (-\infty,0]$.
 For details, see \cite{Contrarian,EfficientLevyExtremum}.

 \subsection{Representations and numerical evaluation of $V_2(a_1,h; T)$}\label{ss:V2}
 The following representation for $V_2(a_1,h; T)$ is immediate from \cite[Thm.3.5]{EfficientLevyExtremum}:
      \begin{thm}\label{thm:CPDF}  Let $a_1\le b$, $h>0$. For any $\mum<\omm<0<\omp  <\mup$, 
      there exists $\sg_0>0$ such that, for any $q\ge \sg_0$, the Laplace transform $\tV_2(a_1,h;q)$ is given by
      \bbe\label{tV2}
      \tV_2(a_1,h; T)=\frac{1}{(2\pi)^2 q}\int_{\Im\eta=\omm}d\eta\, e^{-ih\eta}\phi^{++}_q(\eta)  \int_{\Im\xi=\omp}d\xi\, \frac{e^{i\xi(h-a_1)}\phi^{--}_q(\xi)}{\xi(\xi-\eta)}.
\ee 
    \end{thm}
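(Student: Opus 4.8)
The plan is to start from the Laplace-transform formula \eq{tVjoint}, that is $\tV_2(a_1,h;q)=-q^{-1}(\cEpq\bfo_{[h,+\infty)}\cEmq\bfo_{(-\infty,a_1]})(0)$, and to convert each expectation operator into its symbol acting on a contour (Fourier) representation of the indicator to which it is applied. First I would fix $\omm,\omp$ with $\mum<\omm<0<\omp<\mup$ and choose $\sg_0$ large enough that Lemmas \lemm{phipmq_anal_cont} and \lemm{phipmq_explicit1} apply, so that for $q\ge\sg_0$ the factors $\phipq,\phimq$ are analytic and bounded on $\{\Im\xi\ge\mum\}$ and $\{\Im\xi\le\mup\}$ respectively, and the decay bounds \eq{WHFdecayP}--\eq{WHFdecayM} on $\phi^{++}_q,\phi^{--}_q$ are available.

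The first reduction removes the atom of the supremum. Writing $\cEpq=a^+_qI+\cE^{++}_q$ and setting $F:=\bfo_{[h,+\infty)}\,\cEmq\bfo_{(-\infty,a_1]}$, the term $a^+_qF(0)$ vanishes because $h>0$ forces $F(0)=0$; hence $\tV_2=-q^{-1}\cE^{++}_qF(0)$, so only the decaying factor $\phi^{++}_q$ enters through the outer operator. Next I represent the inner indicator as $\bfo_{(-\infty,a_1]}(y)=-\frac{1}{2\pi i}\int_{\Im\xi=\omp}\frac{e^{i\xi(y-a_1)}}{\xi}\,d\xi$ (correct precisely because the contour passes above the pole at $\xi=0$), apply $\cEmq$ under the integral via $\cEmq e^{i\xi\cdot}(y)=\phimq(\xi)e^{i\xi y}$ — legitimate since $\Im\xi=\omp<\mup$ lies in the domain of analyticity of $\phimq$ — and obtain $w(y):=\cEmq\bfo_{(-\infty,a_1]}(y)=-\frac{1}{2\pi i}\int_{\Im\xi=\omp}\frac{e^{-i\xi a_1}}{\xi}\phimq(\xi)e^{i\xi y}\,d\xi$.

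Applying the symbol of $\cE^{++}_q$ then gives $\cE^{++}_qF(0)=\frac{1}{2\pi}\int_{\Im\eta=\omm}\phi^{++}_q(\eta)\big(\int_h^\infty e^{-iy\eta}w(y)\,dy\big)\,d\eta$, where I have already displaced the $\eta$-contour from $\bR$ down to $\Im\eta=\omm$; this is allowed because $F$ is supported in $[h,+\infty)$, so its transform is analytic for $\Im\eta<0$, while $\phi^{++}_q$ is analytic and decaying for $\Im\eta\ge\mum$. Inserting $w$ and carrying out the elementary $y$-integral $\int_h^\infty e^{i(\xi-\eta)y}\,dy=i\,e^{i(\xi-\eta)h}/(\xi-\eta)$ — convergent exactly because $\Im(\xi-\eta)=\omp-\omm>0$ — produces the kernel $1/(\xi(\xi-\eta))$ together with the phases $e^{-ih\eta}$ and $e^{i\xi(h-a_1)}$, yielding the asserted double integral but with $\phimq(\xi)$ in place of $\phi^{--}_q(\xi)$. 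To finish I write $\phimq=a^-_q+\phi^{--}_q$: the contribution of the constant $a^-_q$ equals $a^-_q\int_{\Im\xi=\omp}\frac{e^{i\xi(h-a_1)}}{\xi(\xi-\eta)}\,d\xi$, and since $h-a_1\ge0$ and both poles $\xi=0,\eta$ lie below the contour, closing in the upper half-plane (where the integrand is $O(|\xi|^{-2})$) gives zero. This is the one place the hypothesis $a_1\le h$ is used, and it replaces $\phimq$ by $\phi^{--}_q$, giving \eq{tV2}.

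The main obstacle I anticipate is not the algebra but the rigorous justification of the successive interchanges of the (only conditionally convergent) contour integrals with the operators and with the $y$-integration, and of the identity $\cEmq e^{i\xi\cdot}=\phimq(\xi)e^{i\xi\cdot}$ for $\Im\xi>0$ — a regime in which $\bE[e^{i\xi\uX_{T_q}}]$ need not converge and $\phimq$ is understood through its Wiener--Hopf analytic continuation. I would resolve this by choosing $q\ge\sg_0$ so that the bounds of Lemma \lemm{lem:der_WHF_q_SINH} together with \eq{WHFdecayP}--\eq{WHFdecayM} make the inner $\xi$-integrand $O(|\xi|^{-2})$ and the outer $\eta$-integrand $O(|\eta|^{-1-\nup})$, hence absolutely integrable; once the non-decaying atoms $a^\pm_q$ have been stripped off by the two vanishing arguments above, Fubini and the contour displacements become routine.
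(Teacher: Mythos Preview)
Your derivation is correct and complete in outline. The paper itself does not supply a proof of this theorem: it simply states that the representation ``is immediate from \cite[Thm.~3.5]{EfficientLevyExtremum}.'' Your argument is exactly the direct computation one would carry out to verify the claim --- expand the indicators as contour Fourier integrals, act on exponentials by the EPV-symbols, compute the elementary $y$-integral $\int_h^\infty e^{i(\xi-\eta)y}\,dy$, and discard the atom contributions $a^\pm_q$ using, respectively, $h>0$ and the contour-closing argument for $h-a_1\ge 0$. This is the method used throughout the references \cite{NG-MBS,barrier-RLPE,single,EfficientLevyExtremum} on which the paper is built, so your route is effectively the one behind the cited result.

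Two minor comments. First, you do not really need Lemma~\ref{lem:der_WHF_q_SINH} for the justification: the standing decay bounds \eqref{WHFdecayP}--\eqref{WHFdecayM} already make the inner $\xi$-integral $O(|\xi|^{-2-\num})$ and, since the inner integral is $O(|\eta|^{-1})$ for large $|\eta|$, the outer $\eta$-integrand is $O(|\eta|^{-1-\nup})$, which is enough for Fubini. Invoking Lemma~\ref{lem:der_WHF_q_SINH} would impose the extra hypothesis $\nu\in[1,2]$ or $\mu=0$, which the theorem as stated does not require. Second, the identity $\cEmq e^{i\xi\cdot}=\phimq(\xi)e^{i\xi\cdot}$ for $\Im\xi\in(0,\mup)$ is not problematic: it holds for $\Im\xi\le 0$ by the probabilistic definition, and both sides are analytic in $\xi$ on $\{\Im\xi<\mup\}$ (the left-hand side because $\bE[e^{-\gamma X_1}]<\infty$ for $\gamma\in[\mum,\mup]$ implies the infimum has the right exponential moments), so analytic continuation is straightforward --- you flagged this correctly but perhaps overestimated the difficulty.
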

In \cite{EfficientLevyExtremum}, it is also proved that the integrand on the RHS of \eq{tV2} is bounded (in absolute value) by an integrable function independent
of $q\in \{\Re q\ge \sg_0\}$. Therefore, $ V_2(a_1,h; T)$ can be recovered using the Bromwich integral
\beqa\label{V2}
      V_2(a_1,h; T)&=&\frac{1}{2\pi i}\int_{\Re q=\sg}dq\,\frac{e^{qT}}{q}\\\nonumber
&&\cdot\frac{1}{(2\pi)^2}\int_{\Im\eta=\omm}d\eta\,e^{-ih\eta}\phi^{++}_q(\eta)  \int_{\Im\xi=\omp  }d\xi\, \frac{e^{i\xi(h-a_1)}\phi^{--}_q(\xi)}{\xi(\xi-\eta)}
\eqa 
understood in the sense of generalized functions. 
Assuming that $\nu\in [1,2]$ or $\nu\in (0,1)$ and $\mu=0$, we can  apply appropriate sinh-acceleration deformations to the integrals w.r.t.
$q,\eta,\xi$: 
\beqa\label{V2_sinh}
      V_2(a_1,h; T)&=&\frac{1}{2\pi i}\int_{\cL^L_{\sg,b_\ell,\om_\ell}}dq\,\frac{e^{qT}}{q}\\\nonumber
&&\cdot\frac{1}{(2\pi)^2}\int_{\cL^-_{\om^-_1,b^-,\om_-}}d\eta\,e^{-ih\eta}\phi^{++}_q(\eta)  \int_{\cL^+_{\om^+_1,b^+,\om_+}} \frac{e^{i\xi(h-a_1)}\phi^{--}_q(\xi)d\xi\,}{\xi(\xi-\eta)}.
\eqa 
For any compact subset of $\{a_1\le h, h\ge 0, T\ge t>0\}$, the triple integrand is bounded in absolute value by a function of class $L_1$ (see \cite{EfficientLevyExtremum} for the proof),  hence, the function on the RHS  is continuous on $\{a_1\le h, h\ge 0, T>0\}$.
We can also apply the GWR algorithm to the outer integral below
\beqa\label{V2_summ}
      V_2(a_1,h; T)&=&\frac{1}{2\pi i}\int_{\Re q=\sg}dq\,\frac{e^{qT}}{q}\\\nonumber
&&\cdot 
      \frac{1}{(2\pi)^2}\int_{\cL^-_{\om^-_1,b^-,\om_-}}d\eta\,e^{-ih\eta}\phi^{++}_q(\eta) \int_{\cL^+_{\om^+_1,b^+,\om_+}} \frac{e^{i\xi(h-a_1)}\phi^{--}_q(\xi)d\xi\,}{\xi(\xi-\eta)},
\eqa 
Numerical examples in \cite{EfficientLevyExtremum}
demonstrate that the best accuracy achievable with the GWR algorithm is several orders of magnitude worse, and, for the same accuracy, the CPU times are approximately the same.

\subsection{Representations of $\dd_hV_2(a_1,h; T)$ and $\dd_{a_1}\dd_hV_2(a_1,h; T)$}\label{ss:V2der}
If $\nu\in [1,2]$ or $\nu\in (0,1)$ and $\mu=0$, the differentiation under the integral sign on the RHS of \eq{V2_sinh}
is justified\footnote{The proof in \cite{asymp-sens} for several classes of SINH-regular processes  is valid for all SINH-regular processes.}, and we obtain
\beqa\label{derV2_sinh}
      \dd_hV_2(a_1,h; T)&=&\frac{1}{2\pi i}\int_{\cL^L_{\sg,b_\ell,\om_\ell}}dq\,\frac{e^{qT}}{q}\\\nonumber
&&\cdot\frac{1}{(2\pi)^2}\int_{\cL^-_{\om^-_1,b^-,\om_-}}d\eta\,e^{-ih\eta}\phi^{++}_q(\eta)  \int_{\cL^+_{\om^+_1,b^+,\om_+}} \frac{e^{i\xi(h-a_1)}\phi^{--}_q(\xi)d\xi\,}{-i\xi},
\eqa 
and 
\beqa\label{derV2_sinh_dera}
     \dd_{a_1} \dd_hV_2(a_1,h; T)&=&\frac{1}{2\pi i}\int_{\cL^L_{\sg,b_\ell,\om_\ell}}dq\,\frac{e^{qT}}{q}
     \frac{1}{(2\pi)^2}\int_{\cL^-_{\om^-_1,b^-,\om_-}}d\eta\,\\\nonumber
&&\cdot e^{-ih\eta}\phi^{++}_q(\eta)  \int_{\cL^+_{\om^+_1,b^+,\om_+}} e^{i\xi(h-a_1)}\phi^{--}_q(\xi)d\xi.
\eqa 
In the general case, we can derive an analogue of \eq{derV2_sinh} in the sense of the generalized functions only:
\beqa\label{derV2}
     \dd_h V_2(a_1,h; T)&=&\frac{1}{2\pi i}\int_{\Re q=\sg}dq\,\frac{e^{qT}}{q}\\\nonumber
&&\cdot\frac{1}{(2\pi)^2}\int_{\Im\eta=\omm}d\eta\,e^{-ih\eta}\phi^{++}_q(\eta)  \int_{\Im\xi=\omp  } \frac{e^{i\xi(h-a_1)}\phi^{--}_q(\xi)d\xi\,}{-i\xi}.
\eqa 

\begin{rem}\label{rem:nu<1}{\rm 
 As functions of $\xi$ on the contours that we consider,  $\phi^{\pm\pm}_q(\xi)/\xi$ are absolutely integrable. In the result, in the formulas for $V_{f.t.d.}(h,t)$ and $V_2(a_1,h; T)$, the integrand is absolutely integrable as a function of $\xi$. However, to make the integrand absolutely convergent as a function of $q$, we need to integrate by parts. Under conditions of Lemma \ref{lem:der_WHF_q_SINH}, we can integrate by parts, 
 and reduce the analysis to the case of the absolutely convergent multiple integrals. If $\nu<1$ and $\mu\neq 0$, then not only 
 Lemma \ref{lem:der_WHF_q_SINH} is not applicable; one can show that the conclusions of the lemma fail.
 Hence, the formulas for $V_{f.t.d.}(h,t)$ and $V_2(a_1,h; T)$, and, especially, for $\dd_hV_2(a_1,h; T)$, can be understood in
 the sense of generalized functions only, and  the proof of convergence and error bounds for
standard quadratures are difficult.
}
 \end{rem}
\subsection{Evaluation using Carr's randomization}\label{ss:Carr}
This problem does not arise if Carr's randomization
is applied because, at each step, prices of perpetual barrier options are calculated, and, for positive values of the spectral parameter, the Wiener-Hopf factors are sufficiently regular. 
 Carr's randomization algorithm
(maturity randomization) is justified for  Markov processes in \cite{MSdouble}, under a weak regularity condition,
and applied to price barrier options in regime-switching hyper-exponential jump-diffusion models.
In \cite{single}, explicit Carr's randomization algorithms were developed for RLPE  processes. 
The class of SINH-processes being a subclass of the class of RLPE processes, we can evaluate
$V_{f.t.d.}(h,t)$ and $V(a_1,h; T)$ using the method in \cite{single}. In \cite{asymp-sens}, the method is justified for 
sensitivities, hence,
one can evaluate $\dd_hV(a_1,h; T)$  as well. However, the calculations in \cite{single} are
in the state space, therefore, slower than the calculations in the dual space. Also, additional interpolation errors appear.

In Carr's randomization approximation, the maturity period $T$ of the option is divided into $N$
subintervals, using points $0=t_0<t_1<\dotsb<t_N=T$ (when $V_{f.t.d.}(h,t)$ is calculated, $T=t$), and 
each sub-period $[t_s,t_{s+1}]$ is replaced with an exponentially distributed
random maturity period with mean $\De_s=t_{s+1}-t_s$. Moreover,
these $N$ random maturity sub-periods are assumed to be independent
of each other and of the process $X$. In \cite{carr-random}, it is
assumed that $\De_s=T/N$ for all $s$, but, in principle, it is unnecessary 
to impose this requirement. In \cite{single}, the killing rate (interest rate) $r$ is assumed positive but
for options of finite maturity the proof and algorithms remain valid for any $r\in \bR$. For the purposes of the present paper,
we need $r=0$.   Below, $V_s$ denotes the Carr's randomization approximation to the value at time $t_s$.
\vskip0.1cm
\noindent
{\sc Algorithm for $V_{f.t.d.}(h,t;x)$, the price of the first touch digital at $X_0=x$. The maturity date $t$, the upper barrier $h$.} \begin{enumerate}[I.]
\item
Set $V_N=\bfo_{[h,+\infty)}$.
\item For $s=N-1,N-2,\ldots, 0$, calculate $q_s=t/\De_s$ and
$V_s=(1/q_s)\bfo_{(\infty,h)}\cE^+_{q_s}V_{s+1}+\bfo_{[h,+\infty)}$. 
\item
$V_{f.t.d.}(h,t;x)=V_0(x), x\in\bR.$
\end{enumerate}
\vskip0.1cm
\noindent
{\sc Algorithm for $V_{joint}(a_1,h;T,x)=\bP[X_T\le a_1, \barX_T\le h\ |\ X_0=x]$, $a_1\le h$.}
\begin{enumerate}[I.]
\item
Set $V_N=\bfo_{(-\infty,a_1]}$.
\item For $s=N-1,N-2,\ldots, 0$, calculate $q_s=T/\De_s$ and
$V_s=(1/q_s)\cE^+_{q_s}\bfo_{(-\infty,h]\cE^-_{q_s}}V_{s+1}$. 
\item
$V_{joint}(a_1,h;T,x)=V_0(x), x\in \bR.$
\end{enumerate}
The values $V_s(x)$ are evaluated at points of a chosen grid using an appropriate piece-wise interpolation 
procedure. The state space is truncated, and the approximation to $V_s$ is represented by the array
$\{V_{s;j}\}_{j=1}^M$ of the values $V_s(x_j)$ at points of the chosen grid. The EPV-operators become
the matrix operators, which can be efficiently realized using the fast convolution. See \cite{single} for details.
 To be more specific,   the matrix elements of the discretized $\cE^\pm_q$ are linear combinations
of reals of the form
\beqa\label{mat_elem_1}
(\cE^\pm_q \bfo_{(-\infty,a]}(\cdot)^n)(0)&=&\frac{(n-1)!}{2\pi}\int_{\Im\xi=\om_-} e^{-ia\xi}\phi^\pm_q(\xi)(i\xi)^{-n-1}d\xi,\\\label{mat_elem_2}
(\cE^\pm_q \bfo_{[a,+\infty)}(\cdot)^n)(0)&=&-\frac{(n-1)!}{2\pi}\int_{\Im\xi=\om_+} e^{-ia\xi}\phi^\pm_q(\xi)(i\xi)^{-n-1}d\xi,
\eqa
for $n\ge 0$, where $\omm<0$ and $0<\omp$ are sufficiently small in absolute value so that the contours $\{\Im\xi=\om_\pm\}$ are
in the strip of analyticity of $\phi^\pm_q$. In \cite{single}, uniforms grids and the piece-wise linear interpolation is used, and the matrix elements of the discretized EPV-operators are calculated using the Fast Fourier transform (FFT).
More accurate and faster calculations of the matrix elements can
be performed using the sinh-acceleration similarly to \cite{MarcoDiscBarr} where the matrix elements of the discretized
transition operators are calculated using fractional-parabolic deformations. 
If the process has a significant Brownian motion component or the characteristic exponent is a rational function, then
the interpolation of higher order can be advantageous. See \cite{MarcoDiscBarr}  for explicit formulas for the matrix elements.

If the value function is irregular, then, in a vicinity of the barrier, it is advantageous to approximate $V_s$ on
$[h-2\De, h]$ as $V_s(x)\approx V_s(h-\De)$ instead of $V_s(x)\approx V_s(h-)+(x-h)(V_s(h-0),V_s(h-2\De))/2$.
We leave to the reader the straightforward modification of the construction of the matrix approximation in \cite{single}.

\begin{rem}\label{rem:Carr}{\rm \begin{enumerate}[(a)]
\item If the left tail of the distribution of $X$ decays very slowly, then
it is necessary to use a very large truncated interval, and Carr's randomization becomes very inefficient.

\item One can perform all steps of Carr's randomization in the dual space using the fast Hilbert transform
as in \cite{feng-linetsky08} were barrier options in the discrete time model are priced. However,
$\phi^\pm_q(\xi)$ decay very slowly as $\xi\to\infty$, hence, extremely long grids are necessary to
satisfy even a moderate error tolerance.

\end{enumerate}
}
\end{rem}

\subsection{Other methods for pricing barrier options}\label{ss:other barrier}
If $\nu\in [1,2]$ or $\nu\in (0,1)$ and $\mu=0$, then the method of \cite{EfficientLevyExtremum} based on
the application of the sinh-acceleration to the Bromwich integral and the integrals w.r.t.  other variables in the pricing formula
is the most efficient. Several popular methods are based on either approximation of small jumps by an additional Brownian Motion component or by processes with rational characteristic exponents. The method of \cite{EfficientLevyExtremum}
is applicable after such an approximation\footnote{An approximation of a small jump component should be done so that the new process is SINH-regular.} but the approximation error itself is, typically, sizable, because, implicitly,
 functions with the unbounded first derivative are approximated by smooth functions. See \cite{KudrLev09,single} for examples of errors 
of Cont-Volchkova method (approximation of a small jump component) and approximation with the Hyper-Exponential jump-diffusion model, respectively.
One can also approximate barrier options with continuous monitoring with barrier options with discrete monitoring,
and use any method for pricing the latter, e.g., COS or BPROJ method, but both methods introduce additional  errors. See \cite{MarcoDiscBarr,BSINH} for examples and qualitative analysis.

\subsection{Regularity of $V_{f.t.d.}(h,t)$, $V_2(a_1,h; T)$ and
$\dd_hV_2(a_1,h; T)$}\label{ss:regularity} In \cite{NG-MBS} and \cite{BIL}, the following asymptotic formulas were derived
for $V_{f.t.d.}(h,t)$ and $V_2(a_1,h; T)$, respectively (in both papers, the spot varies and barrier are fixed;
we reformulate the results for the case when the spot 0 is fixed, and the barrier varies). In \cite{asymp-sens}, the asymptotic formulas are derived for sensitivities.

\begin{thm}\label{thm:as}
Let $X$ be SINH-regular of order $\nu$, and $t>0, T>0$, $a_1\le 0$ are fixed. 

Then 
there exist $\ka(t)>0$, $\rho(t)\in (0,1)$  and $\rho(a_1,T)>0$ (depending on the parameters of the process) s.t. as $h\downarrow 0$, 
\begin{enumerate}[(a)]
\item
if $\nu\in (1,2]$ or $\nu\in (0,1]$ and $\mu=0$,  
\beqa\label{eq_ft_as}
V_{f.t.d.}(h,t)&\sim& 1-\ka(t)h^{\nu/2},\\\label{eq_ft_as_der}
\dd_hV_{f.t.d.}(h,t)&\sim& (\nu/2)\ka(t)h^{\nu/2-1},\\\label{as_joint}
V_2(a_1,h; T)+\bP[X_T\le a_1]&\sim& \rho(a_1,T)h^{\nu/2}, \\\label{as_joint_der}
\dd_{h}V_2(a_1,h; T)&\sim& -(\nu/2)\rho(a_1,T)h^{\nu/2-1};
\eqa
\item
if $\nu\in [0+,1)$ and $\mu>0$,   
\beqa\label{eq_ft_as_m}
V_{f.t.d.}(h,t)&\sim& 1-\ka(t)h,\\\label{as_joint_m}
V_2(a_1,h; T)+\bP[X_T\le a_1]&\sim& \rho(a_1,T)h;
\eqa
\item
if $\nu\in [0+,1)$ and $\mu<0$,   
\beqa\label{eq_ft_as_p}
V_{f.t.d.}(h,t)&\sim& \rho(t),\\\label{as_joint_p}
V_2(a_1,h; T)+\bP[X_T\le a_1]&\sim& \rho(a_1,T).
\eqa
\item
if $\nu=1$, $\mu\neq 0$ and, in \eq{asympsisRLPE}, $c_\infty(\varphi)=c_{\infty,0} e^{i\varphi}$,
\footnote{This is the case for a popular class of Normal Inverse Gaussian processes.}
then \eq{eq_ft_as}-\eq{as_joint_der}
 are valid  with $\nup=1/2+\arctan(\mu/c_\infty)/\pi$ instead of $\nu/2=1/2$.
\end{enumerate}
\end{thm}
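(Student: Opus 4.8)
The plan is to transfer the large-$\xi$ behaviour of the Wiener-Hopf factors appearing in the Fourier integral representations into the small-$h$ behaviour of $V_{f.t.d.}$ and $V_2$; this is an Abelian-type statement for oscillatory integrals $\int e^{-i\xi h}g(\xi)\,d\xi$, whose singularity at $h=0$ is governed by the tail of $g$ at $\xi=\infty$. I would work first at a fixed $q$ on the Bromwich contour, extract the leading singular term of the inner integrand at $\xi=\infty$, and only afterwards perform the Laplace inversion in $q$, which supplies the whole $t$- and $T$-dependence of the constants $\ka(t)$, $\rho(t)$, $\rho(a_1,T)$.

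The key analytic input is the asymptotics of the factors at infinity. Starting from the explicit formula of \lemm{phipmq_explicit1} and the homogeneity $\psi^0(\xi)\sim c_\infty(\varphi)|\xi|^\nu$ of \eq{asympsisRLPE}, I would split $\ln(1+\psi(\eta)/q)$ into its leading part $\ln(c_\infty\eta^\nu/q)$ and an integrable remainder, and evaluate the resulting Wiener-Hopf integral by contour rotation to obtain
\[
\phipq(\xi)\sim \Cp(q)(-i\xi)^{-\nup},\qquad \phimq(\xi)\sim \Cm(q)(i\xi)^{-\num},\quad \xi\to\infty,
\]
with $\nup=\num=\nu/2$ in case (a). In case (d) the split of the unit mass between the two half-line factors is no longer even: for $\nu=1$ the drift tilts $\arg c_\infty(\varphi)$, and solving $\nup+\num=1$, $\nup-\num=(2/\pi)\arctan(\mu/c_\infty)$ (as recorded before \lemm{lem:der_WHF_q_SINH}) gives $\nup=1/2+\arctan(\mu/c_\infty)/\pi$.

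Next I would insert these leading terms into \eq{V_ftd} and \eq{tV2}. In case (a) one has $V_{f.t.d.}(0,t)=\bP[\barX_t\ge0]=1$, so the correction comes from the singular part: closing the inner contour around the branch cut of $(-i\xi)^{-\nup-1}$ on the imaginary axis (a Hankel-type integral) produces a term proportional to $h^{\nup}=h^{\nu/2}$. For $V_2$ the $h$-singularity arises from the large-$\eta$ tail of $\phi^{++}_q(\eta)$ against $e^{-ih\eta}$ in \eq{tV2}, while the remaining $\xi$-integral of $e^{i\xi(h-a_1)}\phi^{--}_q(\xi)/[\xi(\xi-\eta)]$ evaluated at $h=0$ (so at $e^{-i\xi a_1}$) supplies the $q$-dependent prefactor; after inversion these become $\ka(t)$ and $\rho(a_1,T)$, i.e. the Bromwich integrals in $q$ against $e^{qt}$, $e^{qT}$. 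Cases (b) and (c), where $\nu<1$ and $\mu\neq0$, are handled through \lemm{lem:atoms}: when $\mu>0$ the prefactor $q/(q-i\mu\xi)$ in \eq{phip1finvar} forces $\nup=1$ and hence the linear term in (b); when $\mu<0$ the atom $a^+_q\de_0$ of $\barX_{T_q}$ contributes a $q$-independent constant whose inversion gives the jump value $\rho(t)=\bP[\barX_t>0]<1$ of (c). The derivative statements \eq{eq_ft_as_der}, \eq{as_joint_der} follow by differentiating the inner integral under the integral sign (legitimate in cases (a),(d) by \lemm{lem:der_WHF_q_SINH}), which replaces $(-i\xi)^{-\nup-1}$ by $(-i\xi)^{-\nup}$ and thus $h^{\nup}$ by $\nup h^{\nup-1}$.

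The main obstacle is the uniform control needed to interchange the limit $h\downarrow0$ with the Laplace inversion in $q$. One must estimate both the prefactor $\Cp(q)$ and the remainder $\phipq(\xi)-\Cp(q)(-i\xi)^{-\nup}$ uniformly for $q$ on $\{\Re q=\sg\}$ (or its sinh-deformation), with enough decay in $|q|$ that the $q$-integral of the error stays $o(h^{\nup})$; this is exactly where the bounds \eq{good_bound_phipq_der} of \lemm{lem:der_WHF_q_SINH} enter, and converting them into a clean, $q$-integrable remainder estimate is the technical heart of the argument. Note that for $\nu<1$ with $\mu\neq0$ the hypotheses of \lemm{lem:der_WHF_q_SINH} fail, which is precisely why cases (b) and (c) are routed through the atom decomposition of \lemm{lem:atoms} rather than through this uniform scheme.
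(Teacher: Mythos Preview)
The paper does not actually prove \theor{thm:as}: the sentence preceding its statement attributes the asymptotic formulas to \cite{NG-MBS} and \cite{BIL} (and, for the derivatives, to \cite{asymp-sens}), and only reformulates those results---originally stated for a varying spot and fixed barrier---in the present coordinates. So there is no in-paper proof to compare against.

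That said, your plan is the right one and matches the strategy of the cited references. The scheme---read off the large-$\xi$ asymptotics $\phipq(\xi)\sim C_+(q)(-i\xi)^{-\nup}$ from the explicit Wiener--Hopf integrals of \lemm{phipmq_explicit1}, insert them into the representations \eq{V_ftd} and \eq{tV2}, and convert the power tail in $\xi$ into the $h^{\nup}$ singularity via a Hankel-type contour computation---is exactly how \cite{NG-MBS,BIL} proceed. Your routing of cases (b) and (c) through the atom/drift decomposition of \lemm{lem:atoms} is also the correct mechanism: $\mu>0$ forces $\nup=1$ via the prefactor $q/(q-i\mu\xi)$ in \eq{phip1finvar}, while $\mu<0$ produces the atom $a^+_q\de_0$ responsible for the finite nonzero limit in \eq{eq_ft_as_p}. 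The obstacle you single out---uniform-in-$q$ control of the remainder $\phipq(\xi)-C_+(q)(-i\xi)^{-\nup}$ sufficient to exchange the $h\downarrow 0$ limit with the Bromwich integral---is indeed the technical heart of \cite{BIL,asymp-sens}, and is handled there by estimates of the type \eq{good_bound_phipq}--\eq{good_bound_phimq_der}.
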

 If $\nu\in (0,1)$ and $\mu\neq 0$, then the asymptotics of $\dd_{h}V_2(a_1,h; T)$
is, in general, very irregular and depends on more detailed properties of the characteristic exponent
than the ones used in the definition of the class of SINH-regular processes.


\section{SINH-method}\label{s:main}
\subsection{Conditions for the   sinh-deformations}\label{ss: sinhU2}
 In the process of the contour deformations in this section, the following conditions must be satisfied for all  $q,\xi,\eta, q',\eta'$
 that appear in the formulas below:
\begin{enumerate}[(i)]
\item
 $q+\psi(\xi)$, $q+\psi(\eta)$ and $q+\psi(\eta')$ do not assume values in $(-\infty,0]$;
\item
$\xi-\eta\neq 0$ and  $\xi-\eta-\eta'\neq 0$;
\item
the oscillating factors become fast decaying ones.
\end{enumerate}
If $\nu\in [1,2]$ or $\nu\in (0,1)$ and $\mu=0$, then
 \bbe\label{cone_nupr}
     \Re \psi(\xi)\ge c_{\psi;\infty} |\xi|^{\nu}-C_\psi,\quad \forall\, \xi\in i(\mum,\mup)+(\cC_{\gampr,\gappr}\cup\{0\}),
     \ee
     where $C_\psi, c_{\psi;\infty}>0$ are independent of $\xi$. The following lemma implies that one can construct the contours so that the conditions (i)-(iii) are satisfied.
     \begin{lem}\label{lem:cones_Brom} (\cite[Lemma 4.1]{EfficientLevyExtremum}) Let the characteristic exponent $\psi$ of a SINH-regular process satisfy \eq{cone_nupr}.
     Then there exist $\om_\ell\in (0,\pi/2)$ and $c, \sg>0$ such that for all 
     $q\in \sg+\cC_{\pi/2+\om_\ell}$ and  $\xi\in i(\mum,\mup)+(\cC_{\gampr,\gappr}\cup\{0\})$,
    $ |q+\psi(\xi)|\ge c(|q|+|\xi|^\nu).$
     \end{lem}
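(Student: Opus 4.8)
The plan is to prove the estimate $|q+\psi(\xi)|\ge c(|q|+|\xi|^\nu)$ by splitting the analysis into a bounded region and a region where $|q|+|\xi|$ is large, exploiting the asymptotic cone condition \eq{cone_nupr} together with the angular constraints on $q$ and $\xi$. First I would fix the geometric setup: write $q=\sg+|q'|e^{i\theta}$ with $q'\in\cC_{\pi/2+\om_\ell}$, so that $\arg(q-\sg)\in(-\pi/2-\om_\ell,\pi/2+\om_\ell)$, and note that since $\xi\in i(\mum,\mup)+(\cC_{\gampr,\gappr}\cup\{0\})$, condition (vi) of Definition \ref{def:SINH_reg_proc_1D0} gives $\Re c_\infty(\varphi)>0$ on the relevant directions $\varphi\in(\gampr,\gappr)$. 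The key quantitative input is \eq{cone_nupr}, which guarantees $\Re\psi(\xi)\ge c_{\psi;\infty}|\xi|^\nu-C_\psi$ uniformly on the admissible $\xi$-region.

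The main step is to control the argument of $q+\psi(\xi)$ away from $\pi$ so that cancellation in the sum $q+\psi(\xi)$ cannot drive the modulus to zero. I would argue that for $|\xi|$ large, $\psi(\xi)$ lies in a cone $\{|\arg w|\le\pi/2-\delta\}$ about the positive real axis (this follows from \eq{asympsisRLPE}, the continuity of $c_\infty$, and $\Re c_\infty>0$ on $(\gampr,\gappr)$, after possibly shrinking the cone and absorbing the lower-order correction $-i\mu\xi$ into the $\nu\ge 1$ estimate — when $\nu<1$ the hypothesis $\mu=0$ removes this term entirely). Simultaneously, $q-\sg$ lies in the cone $\cC_{\pi/2+\om_\ell}$, i.e. within angle $\pi/2+\om_\ell$ of the positive real axis. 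The choice of $\om_\ell\in(0,\pi/2)$ must be coordinated with $\delta$ so that both $q$ (modulo the harmless shift $\sg$) and $\psi(\xi)$ stay strictly inside a common half-plane, ensuring $\Re\big(e^{-i\alpha}(q+\psi(\xi))\big)\ge c_0(|q|+|\psi(\xi)|)$ for a suitable fixed $\alpha$ and $c_0>0$. Since $|\psi(\xi)|\sim|\xi|^\nu$ at infinity by \eq{asympsisRLPE}, this yields the claimed bound in the large-$(|q|+|\xi|)$ regime.

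For the complementary bounded region, where $|q|+|\xi|\le R$ for a large constant $R$, I would invoke a compactness argument: on $\{\Re q\ge\sg\}$ the quantity $q+\psi(\xi)$ is continuous and, by Lemma \ref{phipmq_anal_cont}(ii) (or directly from $\Re q\ge\sg>\max\{-\psi(i\mum),-\psi(i\mup)\}$ extended to the cone), one shows $q+\psi(\xi)$ is bounded away from zero on this compact set, so $|q+\psi(\xi)|\ge c_1>0\ge c_1(|q|+|\xi|^\nu)/(R+R^\nu)$ there. Matching the two constants gives a single $c>0$ valid everywhere after possibly enlarging $\sg$.

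The hard part will be the angular bookkeeping in the intermediate regime, namely making the half-plane separation between the $q$-cone and the $\psi(\xi)$-cone quantitative and uniform in the direction $\varphi$ of $\xi$. The asymptotic \eq{asympsisRLPE} is only leading-order, so I would need Lemma \ref{lem:der_psi0} (or an equivalent uniform version of the asymptotics) to bound the error term $\psi^0(\xi)-c_\infty(\varphi)|\xi|^\nu$ and confirm it does not rotate $\psi(\xi)$ out of its cone once $|\xi|$ exceeds a threshold; the drift term $-i\mu\xi$ requires separate care precisely at $\nu=1$, which is why the case $\nu=1,\mu\neq 0$ is singled out in the surrounding discussion. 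Since this is exactly the content of \cite[Lemma 4.1]{EfficientLevyExtremum}, I expect the argument to reduce to verifying these angular conditions and then citing or reproducing the compactness and cone estimates above.
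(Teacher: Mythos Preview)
The paper does not prove this lemma: it is stated with a citation to \cite[Lemma 4.1]{EfficientLevyExtremum} and no argument is reproduced here. Your sketch is a correct and standard reconstruction of how such a bound is obtained---angular control on $\psi(\xi)$ and on $q$ so that the angle between them stays bounded away from $\pi$, plus compactness on the bounded region---and it is almost certainly close to what the cited reference does.

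One small simplification: you invoke the full asymptotic machinery (the leading coefficient $c_\infty(\varphi)$, continuity in $\varphi$, the error term via Lemma \ref{lem:der_psi0}, the drift) to place $\psi(\xi)$ in a sector. But the hypothesis \eq{cone_nupr} already hands you $\Re\psi(\xi)\ge c_{\psi;\infty}|\xi|^\nu-C_\psi$ for the \emph{full} $\psi$ on the whole admissible $\xi$-region, so the drift and the $\nu=1$ subtleties are already absorbed. Combined with the upper bound $|\psi(\xi)|\le C(1+|\xi|)^\nu$ from SINH-regularity (Lemma \ref{lem:der_psi0} with $n=0$), this immediately yields $|\arg\psi(\xi)|\le \pi/2-\delta$ for $|\xi|$ large, without revisiting the asymptotic expansion. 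After that, choosing $\om_\ell<\delta$ forces the angle between $q-\sg$ and $\psi(\xi)$ to be at most $\pi-(\delta-\om_\ell)$, and the elementary inequality $|z_1+z_2|\ge c_\epsilon(|z_1|+|z_2|)$ whenever $|\arg z_1-\arg z_2|\le\pi-\epsilon$ finishes the large-parameter regime. Your compactness argument for the bounded region is exactly right.
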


\subsection{Reduction: the case $a_1\le 0$}\label{ss:reduction_quintuple_a1le0}
Assume first that $X$ is SINH-regular of order $\nu\in [1,2]$ or of order $\nu\in (0,1)$
and $\mu=0$. We substitute \eq{V_ftd_sinh_sinh} and \eq{derV2_sinh} into \eq{eq:main2}:
\beqast
&&\bP[X_T\le a_1, \barX_T\le a_2, \tau_{T}\le t]\\
&=&\int_0^{a_2} dh\, \frac{1}{2\pi i}\int_{\cL^L_{\sg',b'_\ell,\om'_\ell}}dq'\,\frac{e^{q't}}{q'}\frac{1}{2\pi}
\int_{\cL^-_{\om_1,b,\om}}d\eta'\,e^{-i\eta' h}\frac{\phi^{++}_{q'}(\eta')}{i\eta'}\\
&&\cdot\frac{1}{2\pi i}\int_{\cL^L_{\sg,b_\ell,\om_\ell}}dq\,\frac{e^{qT}}{q}\frac{1}{(2\pi)^2}\int_{\cL^-_{\om^-_1,b^-,\om_-}}d\eta\,e^{-ih\eta}\phi^{++}_q(\eta)  \int_{\cL^+_{\om^+_1,b^+,\om_+}}d\xi\, \frac{e^{i\xi(h-a_1)}\phi^{--}_q(\xi)}{-i\xi}.
\eqast
The sextuple integral being absolutely convergent\footnote{The proof is similar to the proof in \cite{EfficientLevyExtremum} of the absolute convergence
of the integral on the RHS of \eq{V2_sinh} but messier since the bounds are needed for a function of 6 variables  not 3.}, we apply Fubini's theorem and integrate w.r.t. $h$ first. Since
$
\int_0^{a_2} dh\, e^{ih(\xi-\eta-\eta')}=(e^{ia_2(\xi-\eta-\eta')}-1)/(i(\xi-\eta-\eta')),
$ the result is
\bbe\label{sinh_a1le0}
\bP[X_T\le a_1, \barX_T\le a_2, \tau_{T}\le t]=W(a_1,T,t;a_2)-W(a_1,T,t;0),
\ee
where 
\beqast
W(a_1,T,t;h)&=&\frac{1}{2\pi i}\int_{\cL^L_{\sg,b_\ell,\om_\ell}}dq\,\frac{e^{qT}}{q}\frac{1}{(2\pi)^2}
\int_{\cL^-_{\om^-_1,b^-,\om_-}}d\eta\,e^{-ih\eta}\phi^{++}_q(\eta)  \int_{\cL^+_{\om^+_1,b^+,\om_+}} \frac{e^{i\xi(h-a_1)}\phi^{--}_q(\xi)d\xi\,}{\xi}\\
&&\cdot \frac{1}{2\pi i}\int_{\cL^L_{\sg',b'_\ell,\om'_\ell}}dq'\,\frac{e^{q't}}{q'}\frac{1}{2\pi}
\int_{\cL^-_{\om_1,b,\om}}d\eta'\,e^{-i\eta' h}\frac{\phi^{++}_q(\eta')}{i\eta'(\xi-\eta-\eta')}.
\eqast
The calculations in the algorithm in Section \ref{ss:algo_SINH} are arranged according the following representation:
\bbe\label{Wb}
W(a_1,T,t;h)=\frac{1}{(2\pi)^3}\Im\int_{\cL^-_{\om^-_1,b^-,\om_-}}d\eta\,e^{-ih\eta}
 \int_{\cL^+_{\om^+_1,b^+,\om_+}}d\xi\,\frac{e^{i(h-a_1)\xi}}{\xi}S_2(\eta,\xi)S_3(h;\eta,\xi),
\ee
where 
\beqa\label{S1}
S_1(\eta')&=&\frac{1}{2\pi i}\int_{\cL^L_{\sg',b'_\ell,\om'_\ell}}dq'\,\frac{e^{q't}}{q'}\frac{\phi^{++}_{q'}(\eta')}{\eta'},
\eqa
\beqa\label{S2}
S_2(\eta,\xi)&=&\frac{1}{2\pi i}\int_{\cL^L_{\sg,b_\ell,\om_\ell}}dq\,\frac{e^{qT}}{q}\phi^{++}_q(\eta)\phi^{--}_q(\xi),\\\label{S3b}
S_3(h;\eta,\xi)&=&\int_{\cL^-_{\om^-_1,b^-,\om_-}}d\eta'\,\frac{e^{-ih\eta'}S_1(\eta')}{\xi-\eta-\eta'}.
\eqa

\subsection{Reduction: the case $a_1> 0$}\label{ss:reduction_quintuple_a1>0}
Since
\[
\bP[X_T\le a_1, \barX_T\le a_2, \tau_{T}\le t]=\bP[X_T\le 0, \barX_T\le a_2, \tau_{T}\le t]
+\bP[0<X_T\le a_1, \barX_T\le a_2, \tau_{T}\le t]
\]
and the first term on the RHS is calculated in the preceding subsection (set $a_1=0$), it remains to calculate the second term.
We have $\bP[0<X_T\le a_1, \barX_T\le a_2, \tau_{T}\le t]=\bP[0<X_T\le \min\{a_1,a_2\}, \barX_T\le a_2, \tau_{T}\le t]$,
therefore, it suffices to consider the case
$a_1\le a_2$. Then
\beqast
&&\bP[0<X_T\le a_1, \barX_T\le a_2, \tau_{T}\le t]\\
&=&\int_0^{a_1}da\,\int_a^{a_2}dh\,V_{f.t.d.}(h,t)\frac{\dd^2}{\dd a\dd h}V_2(a,h,T)\\
&=&\int_0^{a_1}da\,\int_a^{a_2}dh\,\frac{1}{2\pi i}\int_{\cL^L_{\sg',b'_\ell,\om'_\ell}}dq'\,\frac{e^{q't}}{q'}\frac{1}{2\pi}
\int_{\cL^-_{\om_1,b,\om}}d\eta'\,e^{-i\eta' h}\frac{\phi^{++}_{q'}(\eta')}{i\eta'}\\
&&\cdot\frac{1}{2\pi i}\int_{\cL^L_{\sg,b_\ell,\om_\ell}}dq\,\frac{e^{qT}}{q}\frac{1}{(2\pi)^2}\int_{\cL^-_{\om^-_1,b^-,\om_-}}d\eta\,e^{-ih\eta}\phi^{++}_q(\eta)  \int_{\cL^+_{\om^+_1,b^+,\om_+}}d\xi\, e^{i\xi(h-a_1)}\phi^{--}_q(\xi).
\eqast
Similarly to the proof in \cite{EfficientLevyExtremum} of the absolute convergence
of the integral on the RHS of \eq{V2_sinh}, one can derive a bound for the absolute value of the integrand via a
positive function of class $L_1$, hence, Fubini's theorem is applicable. 
 We use
 \beqast
 \int_0^{a_1}da\,e^{-ia\xi}\int_a^{a_2}dh\, e^{ih(\xi-\eta-\eta')}
 &=&
 \int_0^{a_1}da\,\frac{e^{ia_2(\xi-\eta-\eta')}-e^{ia(\xi-\eta-\eta')}}{i(\xi-\eta-\eta')}e^{-ia\xi}\\
 &=&\frac{e^{i(a_2-a_1)\xi-ia_2(\eta+\eta')}}{i(\xi-\eta-\eta')(-i\xi)}-\frac{e^{ia_2\xi-ia_2(\eta+\eta')}}{i(\xi-\eta-\eta')(-i\xi)}\\
 &&-\frac{e^{-ia_1(\eta+\eta')}}{i(\xi-\eta-\eta')(-i(\eta+\eta'))}+\frac{1}{i(\xi-\eta-\eta')(-i(\eta+\eta'))}
 \eqast
to obtain
\beqa\label{sinh_a1>0}
\bP[0<X_T\le a_1, \barX_T\le a_2, \tau_{T}\le t]&=&W_1(a_2,T,t;a_2-a_1)-W_1(a_2,T,t;a_2)\\\nonumber
&&-W_2(a_1,T,t)+W_2(0,T,t),
\eqa
where 
\beqast
W_1(a_2,T,t;h)&=&
\frac{1}{2\pi i}\int_{\cL^L_{\sg,b_\ell,\om_\ell}}dq\,\frac{e^{qT}}{q}\frac{1}{(2\pi)^2}
\int_{\cL^-_{\om^-_1,b^-,\om_-}}d\eta\,e^{-ia_2\eta}\phi^{++}_q(\eta)  \int_{\cL^+_{\om^+_1,b^+,\om_+}}d\xi\, \frac{e^{ih\xi}\phi^{--}_q(\xi)}{\xi}\\
&&\cdot \frac{1}{2\pi i}\int_{\cL^L_{\sg',b'_\ell,\om'_\ell}}dq'\frac{e^{q't}}{q'}\frac{1}{2\pi}
\int_{\cL^-_{\om_1,b,\om}}d\eta'\,e^{-i a_2\eta'}\frac{\phi^{++}_q(\eta')}{i\eta'(\xi-\eta-\eta')},\eqast
\beqast
W_2(h,T,t)&=&
\frac{1}{2\pi i}\int_{\cL^L_{\sg,b_\ell,\om_\ell}}dq\,\frac{e^{qT}}{q}\frac{1}{(2\pi)^2}
\int_{\cL^-_{\om^-_1,b^-,\om_-}}d\eta\,e^{-ih\eta}\phi^{++}_q(\eta)  \int_{\cL^+_{\om^+_1,b^+,\om_+}} d\xi\,\phi^{--}_q(\xi)\\
&&\cdot \frac{1}{2\pi i}\int_{\cL^L_{\sg',b'_\ell,\om'_\ell}}dq'\frac{e^{q't}}{q'}\frac{1}{2\pi}
\int_{\cL^-_{\om_1,b,\om}}d\eta'\,e^{-ih\eta'}\frac{\phi^{++}_q(\eta')}{i\eta'(\xi-\eta-\eta')(\eta+\eta')}.\\
\eqast
The calculations in the algorithm below are arranged according to the following representation:
\beqast\label{W1b}
W_1(a_1,T,t;h)&=&\frac{1}{(2\pi)^3}\Im\int_{\cL^-_{\om^-_1,b^-,\om_-}}d\eta\,e^{-ia_2\eta}
 \int_{\cL^+_{\om^+_1,b^+,\om_+}}d\xi\,\frac{e^{ih\xi}}{\xi}S_2(\eta,\xi)S_{31}(a_2;\eta,\xi)\\\label{W2b}
W_2(h,T,t)&=&\frac{1}{(2\pi)^3}\Im\int_{\cL^-_{\om^-_1,b^-,\om_-}}d\eta\,e^{-ih\eta}
 \int_{\cL^+_{\om^+_1,b^+,\om_+}}d\xi\,S_2(\eta,\xi)S_{32}(h;\eta,\xi),
\eqast
where 
\beqast
S_{31}(a_2;\eta,\xi)&=&\int_{\cL^-_{\om^-_1,b^-,\om_-}}d\eta'\,\frac{e^{-ia_2\eta'}S_1(\eta')}{\xi-\eta-\eta'},
\\
S_{32}(h;\eta,\xi)&=&\int_{\cL^-_{\om^-_1,b^-,\om_-}}d\eta'\,\frac{e^{-ih\eta'}S_1(\eta')}{(\eta+\eta')(\xi-\eta-\eta')}.
\eqast

\subsection{Evaluation using GWR algorithm and/or summation by parts}\label{ss:eval_GWR_summ}
Formally, we can use flat contours of integration and summation by parts w.r.t.  all variables. If $\nu\in [1,2]$ or $\nu\in(0,1)$ and $\mu=0$, then it follows from the bounds for the derivatives of the Wiener-Hopf factors that the rate of decay increases with each application of the summation by parts, in each integral (see Lemma \ref{lem:der_WHF_q_SINH}). Formally, we can also apply GWR algorithm to evaluate
the integral w.r.t. $q'$ and $q$ but it is advisable not to apply the GWR-algorithm in both integrals if double precision arithmetic is used. The reason 
is that even one application of  GWR algorithm introduces an error of the order of E-07, at least, and to satisfy the error tolerance of the order of E-06,
the values of the integrand must be  evaluated with the accuracy E-13.

\section{Algorithms of SINH-method and numerical examples}\label{s:numer}

\subsection{Algorithms}\label{ss:algo_SINH}
  We formulate the algorithms for the evaluation of the RHS of \eq{eq:main2} using SINH-method
and assuming that $\nu\in [1,2]$ or $\nu\in (0,1)$ and $\mu=0$.
  In this case, $\phi^{\pm\pm}_q=\phi^\pm_q$, and the same  contours in the $\xi$-and $\eta$-spaces can be used for  for the evaluation   of the Wiener-Hopf factors and in the main formulas.
  The deformations must be in the agreement explained in
 Sect. \ref{ss: sinhU2}. 
 Some of the blocks below are  algorithms  borrowed from  \cite{Contrarian,EfficientLevyExtremum}
 for the evaluation of $V_{n.t.}(a_2,t)$ and $V_2(a_1,a_2,T)$. 

 \subsubsection{Algorithm I. Sinh-acceleration is applied to all integrals}\label{ss:algo_I}
 \begin{enumerate}[Step I.]
\item 
Choose the sinh-deformation in the Bromwich integrals  and grids for the simplified trapezoid rule: $\vec{y}=\ze_\ell*(0:1:N_\ell)$,
$\vec{q}=\sg_\ell+i*b_\ell*\sinh(i*\om_\ell+\vec{y})$, and $\vec{y'}=\ze'_\ell*(-N'_\ell:1:N'_\ell)$,
$\vec{q'}=\sg'_\ell+i*b'_\ell*\sinh(i*\om'_\ell+\vec{y'})$.
Unless $t<<T$, the deformation can be the same
for each integral but even in this case, it may be advantageous to choose $\ze'_\ell<\ze_\ell$ and $N'_\ell\ze'_\ell>N_\ell\ze_\ell$. 

\item
Calculate  the (normalized) derivatives \[
\vec{der_\ell}=b_\ell*\cosh(i*\om_\ell+\vec{y}),\ \vec{der'_\ell}=b'_\ell*\cosh(i*\om'_\ell+\vec{y'})\]
and arrays of weigths \[
\cQ=(\ze_\ell/\pi)*\vec{der_\ell}(q).*\exp(T*\vec{q})./\vec{q},\quad
 \cQ'=(\ze_\ell/(2*\pi)*
\vec{der'_\ell}.*\exp(T*\vec{q'})./\vec{q'}t.\]
Denote the elements of $\cQ$ and $\cQ'$ by $\cQ(q)$ and $\cQ'(q')$, and reassign $\cQ(q_0)=\cQ(q_0)/2$.
\item
Choose  the sinh-deformations and grids for the simplified trapezoid rule on $\cL^\pm$: $\vec{y^\pm}=\ze^\pm*(-N^\pm:1:N^\pm)$,
$\vec{\xi^\pm}=i*\om_1^\pm+ b^\pm*\sinh(i*\om^\pm+\vec{y^\pm})$.\\  Calculate $\vec{\psi^\pm}=\psi(\vec{\xi^\pm})$ and 
$\vec{der^\pm}=b^\pm*\cosh(i*\om^\pm+\vec{y^\pm}).
$
\item
{\em Grids for evaluation of the Wiener-Hopf factors $\phi^\pm_q(\xi)$.} Choose longer and finer grids for the simplified trapezoid rule on $\cL^\pm_1$: $\vec{y^\pm_1}=\ze_1^\pm*(-N^\pm_1:1:N^\pm_1)$,
$\vec{\xi^\pm_1}:=i*\om^\pm_1+ b^\pm_1*\sinh(i*\om_1^\pm+\vec{y^\pm_1})$.  Calculate $\psi^\pm_1:=\psi(\vec{\xi^\pm_1})$ and 
$\vec{der^\pm_1}:=b^\pm_1*\cosh(i*\om_1^\pm+\vec{y^\pm_1}).
$
Note that it is unnecessary to use sinh-deformations different from the ones on Step III but it is advisable to write a program allowing
for different deformations in order to be able to control errors of each block of the program separately.
\item
Calculate 2D arrays 
\beqast
D^{+-}_1&:=&1./(\mathrm{conj}(\vec{\xi^-_1})'*\mathrm{ones}(1,2*N^++1)-\mathrm{ones}(2*N^-_1+1,1)*\vec{\xi^+})),\\
D^{-+}_1&:=&1./(\mathrm{conj}(\vec{\xi^+_1})'*\mathrm{ones}(1,2*N^-+1)-\mathrm{ones}(2*N^+_1+1,1)*\vec{\xi^-})).\\
\eqast
\item
 For $q\in \vec{q}$, calculate $\vec{\phipq}=\phipq(\vec{\xi^+})$ and $\vec{\phimq}=\phimq(\vec{\xi^-})$:
\beqast
\vec{\phipq}&:=&\exp((\ze^-_1*i/(2*\pi))*\vec{\xi^+_1}.*((\log(1+\psi^-_1/q)./\vec{\xi^-_1}.*\vec{der^-_1})*D^{-+}_1)),\\
\vec{\phimq}&:=&\exp(-(\ze^+_1*i/(2*\pi))*\vec{\xi^-_1}.*((\log(1+\psi^+_1/q)./\vec{\xi^+_1}.*\vec{der^+_1})*D^{+-}_1)),
\eqast
and then calculate and store 2D arrays
\bbe\label{phimppm}
\vec{\phi^+_{q',-}}:=1./(1+\psi^-./q)./\vec{\phimq},\ \vec{\phi^-_{q,+}}:=1./(1+\psi^+./q)./\vec{\phipq}.
\ee
\item
  Calculate 1D and 2D arrays 
 \beqast
 S_1&=& \sum_{q'}\cQ'(q')*(\vec{\phi^+_{q',-}}./\vec{\xi^-}).\\
 S_2&=&\sum_q \cQ(q)*\mathrm{conj}\,(\vec{\phi^+_{q,-}})'* \vec{\phi^-_{q,+}}.\\
  \eqast
 \item
 In the double cycle in $j=-N^-:1:N^-$, $k=-N^+:1:N^+$, for $h=0,a_2$, calculate the entries $S_{3,h}(j,k)$ of 
 2D arrays 
 \[
 S_{3,h}(j,k)=(1/(\xi^+_k-\xi^-_j))*\mathrm{sum}\,(1./(\xi^+_k-\xi^-_j-\vec{\xi^-}).*S_1.*\exp(-i*h*\vec{\xi^-})).
 \] 
 \item
 For $h=0,a_2$, calculate 
  \beqast
  W_{h}&=&((\ze^+(\ze^-)^2)/(2*\pi)^3)*\mathrm{imag}\, \left((\vec{der^-}.*\exp(-i*h*\vec{\xi^-}))...\right.\\
  &&\left.*(S_2.*S_3)*\mathrm{conj}\,(\vec{der^+}.*\exp(i*(h-\min\{a_1,0\})*\vec{\xi^+})'\right),
  \eqast
 and then $V_0=W_{a_2}-W_{0}$.
 \item
 If $a_1\le 0$, $\bP[X_T\le a_, \barX_T\le a_2, \tau_{T}\le t]=V_0$.
 \item If $a_1>0$, $\bP[X_T\le a_, \barX_T\le a_2, \tau_{T}\le t]=V_0+V_1$,
 where $V_1$ is calculated as follows.
 \item
 Reassign $a_1=\min\{a_1,a_2\}$, and, in the
  double cycle in $j=-N^-:1:N^-$, $k=-N^+:1:N^+$, calculate the entries  of 
 2D arrays 
 \beqast
 S_{31}(j,k)&=&\mathrm{sum}(S_1.*\exp(-(i*a_2)*\vec{\xi^-}).*(\vec{der^-}./(\xi^+_k-\xi^-_j-\vec{der^-}))),\\
                    S_{32;a1}(j,k)&=&\mathrm{sum}(S_1.*\exp(-(i*a_1)*\vec{\xi^-}).*
                        (\vec{der^-}./(\xi^+_k-\xi^-_j-\vec{der^-})./(\xi^-_j-+\vec{\xi^-})));\\
                    S_{32;0}(j,k)&=&\mathrm{sum}(S_1.*(\vec{der^-}./(\xi^+_k-\xi^-_j-\vec{\xi^-})./(\xi^-_j-+\vec{\xi^-})));
\eqast 
\item
For $h=a_2-a_1, a_2$, calculate
\[
W_{1;h}=\mathrm{imag}((\vec{der^-}.*\exp(-(i*a_2)*\vec{\xi^-}))*(S_2.*S_{31})
                *\mathrm{conj}(\vec{der^+}.*\exp(i*h*\vec{\xi^+})./\vec{\xi^+})').
                \]
\item
For $h=0,a_1$, calculate 
\[
W_{2;h}=\mathrm{imag}((\vec{der^-}.*\exp(-(i*a_1)*\vec{\xi^-}))*(S_2.*S_{32;h})
                *\mathrm{conj}(\vec{der^+})').
                \]
\item
 Set            $V_1=((\ze^+(\ze^-)^2)/(2*\pi)^3)*(W_{1;a_2-a_1}-W_{1;a_2}-W_{2;a_1}+W_{2;0}).$            
\end{enumerate}

 \subsubsection{Algorithm II. Sinh-acceleration is applied to all integrals but the one w.r.t. $q$}\label{ss:algo_II}
 The weights $\cQ$ in the numerical procedures
 are the weights in the GS algorithm, and the GWR algorithm is applied at the very last step, after 4D integrals are evaluated. 
  
 \begin{rem}\label{rem:intGS} {\rm Note that unless high precision arithmetic is used, it is not safe to apply   GWR-algorithm
 to the integrals w.r.t. $q'$ because of the sizable error of calculations.}
 \end{rem} 
 
\subsubsection{Algorithm III. Summation by parts is applied}\label{ss:algo_III}
The infinite trapezoid rule can be used to any of the integrals without applying the conformal deformation technique;
the number of terms can be decreased using the summation by parts. The corresponding matrix representations in the algorithm
such as $a*A*b'$, where $a,b$ are 1D arrays and $A$ is 2D array, should be replaced by the explicit double summation (and summation by parts in each infinite sum).

\subsection{Numerical examples}\label{sss:numer}
We consider the same example as shown in Fig.~\ref{meshes} in Introduction  but calculate the values
at a sparser grid. The results are in Table \ref{table1}. The process is KoBoL with the parameters $\nu=1.2,0.8,0.5,0.3, \mu=0, \lp=1, \lm=-2$,
the second instantaneous moment $m_2=\psi^{\prime\prime}(0)=0.1$ fixes  $c=c(\nu,m_2,\lp,\lm)$.
The errors and CPU time of SINH, DISC-SINH and DISC-GWR methods for different choices of the parameters of the numerical schemes are in Tables~\ref{table2}-\ref{table4}. We use different grids for SINH-based and GWR-based methods; if GWR algorithm is used then one cannot hope to obtain significantly more accurate results even for the
joint probabilities of $(X_T,\barX_T)$. See \cite{EfficientLevyExtremum}. We do not show the results
of SINH-GWR algorithm: the errors are larger than the errors of DISC methods.

 In the numerical examples, we choose the parameters of the scheme as follows.
  If the sinh-acceleration is applied to the Bromwich integral, we set 
 $\om_\ell=\pi/10$, and choose $\sg_\ell$, $b_\ell$, $b^\pm$, $\om^\pm_1, \om^\pm$ following the recommendation in \cite{EfficientLevyExtremum}
 for the evaluation of $V_2(a_1,h,T)$.  If GWR algorithm is used, we follow the recommendations in
\cite{EfficientLevyExtremum}. The errors of the benchmark values shown in Table~\ref{table1} are estimated as differences between values
 obtained using the algorithm with $k_{\om_\ell}\om_\ell$ and $k_\om \om$ in place of $\om_\ell$ and $\om_\ell$, where $k_\om\in \{0.9,1\}$ and $k_{\om_\ell}\in \{0.8,0.9,1\}$. 
 The general recommendations for the choices of grids
 in \cite{EfficientLevyExtremum} are based on approximate bounds for the Hardy norms. The bounds being rough, 
 the resulting prescriptions are not very accurate, in the case of the exterior Bromwich integral especially.
 We use the general prescriptions for the error tolerances $\eps=10^{-N^{e}_{WHF}}$ and $\eps=10^{-N^e}$ 
 for evaluation of integrals in the formulas for the Wiener-Hopf factors and quintuple integrals, respectively,
 and adjust the steps and the number of terms decreasing steps by the factor of 1.2 and increasing the number of terms by $1.2^2$,
 with the exception of $\ze_\ell$ and $N_\ell$. Apparently, the ad-hoc bound for the Hardy norm of the integrand in the Bromwich integral (the integrand is a quadruple integral)
 is too inaccurate, and we use the factors  3 and 2.5, respectively, to achieve the accuracy shown in  Table~\ref{table2}.
 For the convenience of the presentation, we choose equal $\ze^+=\ze^-$, $N^+=N^-$, $\ze^+_1=\ze^-_1$, $N^+_1=N^-_1$.
 
 The accuracy and CPU time can be improved if the explicit recommendation for the choice of the parameters of the numerical scheme is used for each point $(a_1,a_2)$ separately; having in mind applications to the Monte-Carlo simulations in vein of
 \cite{SINHregular,ConfAccelerationStable}, we use the same deformations and grids for all $(a_1,a_2)$. \begin{table}
\caption{\small Joint probabilities   (rounded)  calculated using SINH-method.
}
 {\tiny
\begin{tabular}{c|ccccc}
\hline\hline
 & 0.025 & 0.05  & 0.075 & 0.1 & 0.125\\\hline
& & & $\nu=1.2$ & &\\
$-0.10$ & 0.09485687438 & 0.10479444335 & 0.10678805405 & 0.10727897343 & 0.10742368440
  \\
$-0.05$ & 0.17396324242 & 0.19336016343 & 0.19700962934 & 0.19784558698 & 0.19807756974\\

  $0$ & 0.27360405736	 &	0.31628178239	&	0.32433174442	&	0.32601570450	&	0.32644136471\\
 $0.05$ &0.29459120084&	0.37284195852&	0.39457828203	& 	0.39890812914&	0.39987799975\\
 $0.10$ & 0.29459120084	&	0.37284195852&	0.40227057148	& 	0.41377943268&	0.41695149176\\\hline
& & & $\nu=0.8$ & &\\
$-0.10$ & 0.058582602 & 0.059642703 & 0.059836034 & 0.059893044 & 0.059914503\\
$ 0.05 $ & 0.109632518 & 0.111393671 & 0.111690407 & 0.111773323 & 0.111803328\\
$ 0$ & 0.318065334 & 0.322186226 & 0.322739900 &     0.322876977 & 0.322922943\\
$0.05$ & 0.388768010 & 0.411867555 & 0.414270173 & 0.414587103 & 0.414672505\\
$0.10$ &   0.388768010 & 0.411867555 & 0.417654511 & 0.419778209 & 0.420143628\\\hline
& & & $\nu=0.5$ & &\\
$-0.10$ & 0.0393672 & 0.0395104 & 0.0395447 & 0.0395573 & 0.0395630\\
$-0.05 $ & 0.0648411 & 0.0650523 & 0.0651001 & 0.0651170 & 0.0651243\\
$ 0 $ & 0.3196310 & 0.3200335 & 0.3201102 & 0.3201350 & 0.3201452\\
$0.05$ & 0.4133573 & 0.4182210 & 0.4185020 & 0.4185487 & 0.4185650\\
$0.10$ & 0.4133573 & 0.4182210 & 0.4196328 & 0.4202368 & 0.4202946\\\hline
& & & $\nu=0.3$ & &\\
$-0.10$ & 0.029582 & 0.029619 & 0.029630 & 0.029634 & 0.029637\\
$-0.05 $ & 0.044663 & 0.044715 & 0.044729 &	0.044735 &	0.044738\\
$0$ & 0.313754&	0.313840&	0.313861 &	0.313870 &	0.313873\\
$0.05$ & 0.411966 &	0.413583 &	0.413642	& 0.413655 &	0.413661\\
$0.10$ & 0.411966 &	0.413583&	0.414132 &	0.414393 &	0.414408\\\hline
\end{tabular}
}
\begin{flushleft}{\tiny 
Upper row: $a_2$. Left column: $a_1$. The total CPU time for evaluation at 25 points, in seconds, and range of errors:
$\nu=1.2$: 813.9, [4*E-11, E-10];  $\nu=0.8$: 1,182, [4*E-09, 3*E-08]; $\nu=0.5$: 779.0, [E-06,9*E-06];
     $\nu=0.3$: 764.5, [2*E-05, 2*E-04].}
 \end{flushleft}

\label{table1}
 \end{table}

 \begin{table}
\caption{\small  Parameters of SINH-algorithm (steps are rounded), CPU time and range of errors}

 {\tiny
\begin{tabular}{c|cc|cc|cc|cc| cc|c|c}
\hline\hline
$\nu$ & $N^e$ & $N^{e}_{WHF}$ & $\ze_\ell$ & $N_\ell$ & $\ze'_\ell$ & $N'_\ell$ & $\ze$ & $N^\pm$ & $\ze_1$ & $N^\pm_1$ 
& Time (sec.) & Range of Errors 
\\\hline 
$1.2$ & 11 & 13 & 0.07 & 200 & 0.07 & 221 & 0.099 & 335 & 0.085 & 455 & 813.9 & $ [4*E-11, E-10]$ \\
& 8    & 10 & 0.09 & 145 & 0.09   & 162 & 0.13   & 188 & 0.10 & 281 & 193.8 & $ [3*E-07, E-06]$\\ 
& 5.5 & 7  & 0.12  & 114 & 0.12  & 127 & 0.18 & 89   & 0.15 & 149 & 17.5 & $[2*E-05, 6*E-05]$ \\\hline
$0.8$ & 12 & 14 & 0.06 & 237 & 0.06 & 262 & 0.10 & 380 & 0.09 & 496 & 1,182 & $ [4*E-09, 3*E-08]$\\
& 11 & 13 & 0.07 & 200 & 0.07 & 221 & 0.12 & 279 & 0.10 & 380 & 476.5 & $ [2*E-08, 8*E-08]$\\
& 8 & 10 & 0.09 & 145 & 0.09 & 162 & 0.16 & 157 & 0.13 & 235 & 127.9 & $ [2*E-05, 2*E-04]$\\\hline
$0.5$ & 12 & 14 & 0.065 & 218 & 0.065 & 241 & 0.11 & 327 & 0.095 & 436   & 779.0 & $[E-06,9*E-06]$\\ 
& 8& 10& 0.09 & 145 & 0.09 &162  & 0.16 & 157 & 0.13 & 235 & 127.8 & $[0.004,0.006]$
 \\\hline
$0.3$ & 12 & 14 & 0.065 & 218 & 0.065 & 241 & 0.11 & 327 & 0.095 & 436&764.5 &$ [2*E-05, 2*E-04]$\\
& 9 & 11 & 0.08 & 163 & 0.08 & 181 & 0.14 & 194 & 0.12 & 279 & 215.0 & $[0.0013, 0.015]$\\\hline
 
\end{tabular}
}
\label{table2}
 \end{table} 
 
  \begin{table}
\caption{\small  Parameters of DISC-SINH-algorithm (steps are rounded), CPU time and range of errors}

 {\tiny
\begin{tabular}{c|cc|c|cc|cc| cc|c|c}
\hline\hline
$\nu$ & $N^e$ & $N^{e}_{WHF}$ & $\De h$ &$\ze_\ell$ & $N_\ell$ & $\ze$ & $N^\pm$ & $\ze_1$ & $N^\pm_1$ 
& Time (sec.) & Range of Errors 
\\\hline 
$1.2$ & 8 &  10 & $3.13*10^{-5}$ & 0.09 & 145 &  0.133 & 188 & 0.101 & 281 & 611.1 & $ [4*E-09, 2*E-08]$ \\
& 8 & 10 &  $1.56*10^{-5}$ & 0.09 & 145 &  0.133 & 188 & 0.101 & 281 & 1,202 & $ [2*E-09, 6*E-09]$ \\\hline
$0.8$ & 5.5 & 7 & $6.25*10^{-5}$ & 0.12 & 102 & 0.22 & 82 & 0.18 & 124 & 97.6 &  $[2*E-06, 4*E-06]$\\
& 5.5 & 7 & $3.13*10^{-5}$ &  0.12 & 102 & 0.22 & 82 & 0.18 & 124 & 185.0 & $[3*E-0.7, 2*E-06]$\\
& 5.5 & 7 & $1.56*10^{-5}$ &   0.12 & 102 & 0.22 & 82 & 0.18 & 124 & 367.7 & $[2*E-08, 4*E-07]$\\
& 8 & 10 & $1.56*10^{-5}$ & 0.09 & 145 & 0.16 & 157 & 0.13 & 235 & 1,092 & $[2*E-08, 4*E-07]$\\
& 8 & 10 & $7.81*10^{-6}$ & 0.09 & 145 & 0.16 & 157 & 0.13 & 235 & 2,159 & $[6*E-09, 3*E-07]$\\\hline
$0.5$ &  8 & 10 & $6.25*10^{-5}$ & 0.09 & 145 & 0.16 & 157 & 0.13 & 235 & 253.6 & $[1.5*E-05, 1.5*E-04]$ \\
&  8 & 10 & $6.25*10^{-5}$& 0.09 & 145 & 0.16 & 157 & 0.13 & 235 & 503.4 &$[2*E-05, 1.1*E-04]$  \\\hline
$0.3$ & 8 & 10 & $3.13*10^{-5} $ & 0.09 & 145 & 0.16 & 157 &0.13 & 235 & 560.2 & $[0.004, 0.006]$\\
& 8 & 10 & $1.57*10^{-5}$ &  0.09 & 145 & 0.16 & 157 &0.13 & 235 & 1,110 & $[0.0002, 0.005]$\\
& 8 & 10 & $ 7.81*10^{-6}$ & 0.09 & 145 & 0.16 & 157 &0.13 & 235 & 2,425 & $ [3*E-05,0.005]$\\\hline

\end{tabular}
}
\label{table3}
 \end{table} 
 
 \begin{table}
\caption{\small  Parameters of DISC-GWR algorithm (steps are rounded), CPU time and range of errors}

 {\tiny
\begin{tabular}{c|cc|c|cc|cc|c|c}
\hline\hline
$\nu$ & $N^e$ & $N^{e}_{WHF}$ & $\De h$ & $\ze$ & $N^\pm$ & $\ze_1$ & $N^\pm_1$ 
& Time (sec.) & Range of Errors 
\\\hline 
$1.2$ & 5.5 &  7& $2.5*10^{-4}$ &     0.18 & 99& 0.15 & 149 & 38.6 & $ [6*E-07, 3*E-06]$ \\
& 8 &  10 & $1.56*10^{-5}$ &     0.133 & 188 & 0.101 & 281 & 64.8 & $ [3*E-07, 3*E-06]$ \\\hline
$0.8$ & 5.5 &  7& $3.13*10^{-5}$ &  0.18 & 99& 0.15 & 149 & 30.5 &  $[2*E-07, 3*E-06]$\\
& 8 & 10 & $ 1.56*10^{-5}$ & 0.133 & 188 & 0.101 & 281 & 93.4 & $ [E-07,2*E-06]$\\
& 8 & 10 & $7.81*10^{-6}$ & 0.133 & 188 & 0.101 & 281 & 191.3 & $ [2*E-08,2*E-06]$\\\hline
$0.5$ &  8 & 10 & $6.25*10^{-5}$  & 0.16 & 157 & 0.13 & 235 & 24.6 & $[6*E-05, 2*E-04]$ \\
&  8 & 10 & $6.25*10^{-5}$&   0.16 & 157 & 0.13 & 235 & 44.2 &$[7*E-06, 3*E-04]$  \\\hline
$0.3$ &  8 & 10 & $3.13*10^{-5}$ &  0.16 & 157 &0.13 & 235 & 50.7 & $[0.0004,0.006]$\\
& 8 & 10 & $ 1.56*10^{-5}$ & 0.16 & 157 &0.13 & 235 & 107.0 & $[0.0002,0.005]$\\
& 8 & 10 & $7.81*10^{-6}$   & 0.16 & 157 &0.13 & 235 & 267.8 & $[3*E-05,0.005]$
 \\\hline

\end{tabular}
}
\label{table4}
 \end{table}

\section{Conclusion}\label{s:concl}
In the paper, we designed two efficient analytical methods for evaluation of the joint cpdf $V$ of a L\'evy process $X$ with exponentially decaying tails, its supremum process,  and the first time at which $X$ attains its supremum.
The joint cpdf is represented as the Riemann-Stieltjes integral. The integrand is expressed in terms of the probability distribution of the supremum process and 
joint probability distribution function of the process and its supremum process; the latter are evaluated using the methods
developed in \cite{Contrarian,EfficientLevyExtremum}. The first method of the paper is straightforward: the integral is evaluated using an analog of the
 trapezoid rule. However, in the majority of popular L\'evy models, the prices of barrier options are irregular at
 boundary unless the Brownian motion component is not small. The irregular behavior of prices of barrier options at the boundary implies that an interpolation procedure
 implied by a trapezoid-like rule is inaccurate unless a very fine grid is used, and the convergence of the method is very slow,
 which we illustrate with numerical examples.
 A practically useful error bound is essentially impossible to derive. We estimate the errors using benchmarks produced
 by the second method in the paper, which is more accurate albeit slower. We calculate explicitly the Laplace-Fourier transform of $V$ w.r.t. all arguments,
 apply the inverse transforms, and reduce the problem to the evaluation of the sum of 5D integrals (two Laplace inversions and three Fourier inversions). 
The integrals can be evaluated using the summation by parts in the infinite trapezoid rule and  simplified trapezoid rule; the inverse
Laplace transforms can be calculated using the Gaver-Stehfest and Gaver-Wynn-Rho algorithms as well. However, 
if these methods are used, then, typically, high precision arithmetic is needed. 
Under additional conditions on the domain of analyticity of the characteristic exponent,  
the speed of calculations is greatly increased using the conformal deformation technique (sinh-acceleration), and double precision arithmetic allows one to obtain fairly accurate results.
 The program in Matlab running on a Mac with moderate characteristics achieves the precision better than E-07 in 
 a fraction of a second, and the error tolerance of the order of $E-11$ can be satisfied in several seconds per point if
 the result is calculated at several dozen of points in the state space. For a given error tolerance $\eps$, the complexity of the scheme
 is of the order of $(E\ln E)^5$, where $E=\ln(1/\eps)$, and the formal error bounds can be derived. However, simple universal bounds
 and the resulting recommendation for the choice of the parameters of the numerical scheme can be insufficiently accurate or
 lead to an overkill. As in \cite{SINHregular,Contrarian,EfficientLevyExtremum}, we control the error comparing results
 obtained with different conformal deformations; the probability of a random agreement is negligible.
 
  For the methods used in the paper, it is important that the characteristic exponent $\psi$ admits analytic continuation
to a union of a cone and strip containing or adjacent to the real line and enjoy
a regular behavior at infinity. To simplify the presentation, in the paper, we assume that the cone and strip contain the real line.
In the case of stable L\'evy processes different from the Brownian motion, 
the characteristic exponent $\psi$ admits analytic continuation to a cone but not to  any strip and the constructions of the paper
need certain modifications similar to the modifications of the results in \cite{SINHregular,EfficientLevyExtremum}
to the case of stable L\'evy processes in \cite{ConfAccelerationStable,EfficientStableLevyExtremum}.

The algorithms in the paper can be regarded  as  further steps in  a general program of study of the efficiency
of combinations of one-dimensional inverse transforms for high-dimensional inversions systematically pursued by
Abate-Whitt, Abate-Valko \cite{AbWh,AbWh92OR,AbateValko04,AbValko04b,AbWh06} and other authors.
Additional methods can be found in \cite{stenger-book}.
The methods developed in the paper can be used to develop new efficient methods
for Monte Carlo simulations.


\appendix

\section{Technicalities}\label{s:tech}
 \subsection{Infinite trapezoid rule}\label{infTrap} 

Let $g$ be analytic in the strip
$S_{(-d,d)}:=\{\xi\ | \Im\xi\in (-d,d)\}$ and decay at infinity sufficiently fast so that
$\lim_{A\to \pm\infty}\int_{-d}^d |g(i a+A)|da=0,$
and 
\bbe\label{Hnorm}
H(g,d):=\|g\|_{H^1(S_{(-d,d)})}:=\lim_{a\downarrow -d}\int_\bR|g(i a+ y)|dy+\lim_{a\uparrow d}\int_\bR|g(i a+y)|dy<\infty
\ee
is finite. We write $g\in H^1(S_{(-d,d)})$. The integral
$I=\int_\bR g(\xi)d\xi$
can be evaluated using the infinite trapezoid rule
\bbe\label{inftrap}
I\approx \ze\sum_{j\in \bZ} g(j\ze),
\ee
where $\ze>0$. 
The following key lemma is proved in \cite{stenger-book} using the heavy machinery of sinc-functions. A simple proof can be found in
\cite{paraHeston}.
\begin{lem}[\cite{stenger-book}, Thm.3.2.1]
The error of the infinite trapezoid rule admits an upper bound 
\bbe\label{Err_inf_trap}
{\rm Err}_{\rm disc}\le H(g,d)\frac{\exp[-2\pi d/\ze]}{1-\exp[-2\pi d/\ze]}.
\ee
\end{lem}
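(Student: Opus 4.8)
The plan is to reduce the statement to the classical \emph{aliasing formula} and then exploit analyticity by shifting the contour of a Fourier integral to the edges of the strip. Write the Fourier transform as $\hat g(\omega)=\int_\bR g(x)e^{-i\omega x}\,dx$, so that $\int_\bR g(\xi)\,d\xi=\hat g(0)$. The first step is to establish, under the hypotheses on $g$, the Poisson summation identity
\[
\ze\sum_{j\in\bZ}g(j\ze)=\sum_{k\in\bZ}\hat g\!\left(\frac{2\pi k}{\ze}\right),
\]
so that the discretization error becomes exactly the sum of the nonzero harmonics,
\[
{\rm Err}_{\rm disc}=\Bigl|\ze\sum_{j\in\bZ}g(j\ze)-\int_\bR g(\xi)\,d\xi\Bigr|=\Bigl|\sum_{k\neq 0}\hat g\!\left(\frac{2\pi k}{\ze}\right)\Bigr|.
\]
The validity of Poisson summation here is not assumed a priori: it follows from analyticity of $g$ in $S_{(-d,d)}$ together with the $H^1$-bound \eq{Hnorm}, which force $\hat g$ to decay exponentially (the content of the next step) and hence make both sides absolutely convergent.

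The heart of the argument is to bound $\hat g(2\pi k/\ze)$ for each $k\neq 0$. For $k>0$ I would shift the line of integration in $\hat g(2\pi k/\ze)$ downward from $\bR$ to $\{\Im\xi=-d+\eps\}$: by Cauchy's theorem applied to the rectangle with vertical sides at $\Re\xi=\pm A$, and using the decay hypothesis $\lim_{A\to\pm\infty}\int_{-d}^d|g(ia+A)|\,da=0$ to kill the two vertical segments as $A\to\infty$, one obtains
\[
\hat g\!\left(\frac{2\pi k}{\ze}\right)=e^{-2\pi k(d-\eps)/\ze}\int_\bR g\bigl(x-i(d-\eps)\bigr)e^{-2\pi i kx/\ze}\,dx,
\]
whence $|\hat g(2\pi k/\ze)|\le e^{-2\pi k(d-\eps)/\ze}\int_\bR|g(x-i(d-\eps))|\,dx$. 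Letting $\eps\downarrow 0$ and using \eq{Hnorm} to pass to the boundary gives $|\hat g(2\pi k/\ze)|\le e^{-2\pi kd/\ze}H_-$, where $H_-:=\lim_{a\downarrow -d}\int_\bR|g(ia+y)|\,dy$ is the lower-edge contribution to $H(g,d)$. For $k<0$ the symmetric shift \emph{upward} to $\{\Im\xi=d-\eps\}$ yields $|\hat g(2\pi k/\ze)|\le e^{-2\pi|k|d/\ze}H_+$, with $H_+:=\lim_{a\uparrow d}\int_\bR|g(ia+y)|\,dy$ and $H_-+H_+=H(g,d)$.

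It remains to sum the two geometric series. Grouping the positive and negative harmonics,
\[
{\rm Err}_{\rm disc}\le H_-\sum_{k=1}^\infty e^{-2\pi kd/\ze}+H_+\sum_{k=1}^\infty e^{-2\pi kd/\ze}=\bigl(H_-+H_+\bigr)\frac{e^{-2\pi d/\ze}}{1-e^{-2\pi d/\ze}},
\]
which is precisely \eq{Err_inf_trap} since $H_-+H_+=H(g,d)$. The main obstacle is the rigorous justification of the contour deformation and of Poisson summation under these weak hypotheses: one must verify that the vertical connecting segments at $\Re\xi=\pm A$ vanish in the limit (this is exactly what the decay hypothesis is designed to supply) and that the boundary limits of the $L^1$-norms exist and control the shifted integrals (supplied by finiteness of $H(g,d)$ in \eq{Hnorm}). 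An equivalent route, avoiding an explicit appeal to Poisson summation, integrates $g$ against the meromorphic kernel $1/(1-e^{-2\pi i\xi/\ze})$, whose simple poles at the lattice $\ze\bZ$ carry residue $\ze/(2\pi i)$; deforming this contour integral to the two edges of the strip reproduces the same exponential factors and the same geometric sum.
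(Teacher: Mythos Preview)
Your proof is correct. The paper does not supply its own proof of this lemma: it only states that the result is Thm.~3.2.1 in \cite{stenger-book}, where it is proved ``using the heavy machinery of sinc-functions,'' and that ``a simple proof can be found in \cite{paraHeston}.'' Your Poisson-summation argument with contour shifting is exactly one of the standard short proofs; the alternative route you mention at the end --- integrating against the meromorphic kernel $1/(1-e^{-2\pi i\xi/\ze})$ and picking up residues at $\ze\bZ$ --- is essentially the approach of \cite{stenger-book} stripped of the sinc-function language, and is presumably close to the ``simple proof'' of \cite{paraHeston} as well. So there is nothing to compare against in the paper itself, and your argument is a clean self-contained replacement for the citation.
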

Once
an approximately bound for $H(g,d)$ is derived, it becomes possible to choose $\ze$ to satisfy the desired error tolerance.

\subsection{Simplified trapezoid rule and summation by parts}\label{ss:summation_by_parts}\label{ss:sum-by-parts}
The infinite sum \eq{inftrap} is truncated replacing $\sum_{j\in\bZ}$ with 
$\sum_{|j|\le N}.$ The rate of decay of the series $\{g(j\ze)\}$ can be significantly increased and the number of terms $N$ sufficient to satisfy a
 given error tolerance decreased 
if the infinite trapezoid rule 
 is of the form 
\[
I(a)=\ze\sum_{j\in \bZ} e^{-i aj\ze}g(j\ze),
\]
where $a\in\bR\setminus 0$, and $g'(y)$ decreases faster than $g(y)$ as $y\to\pm\infty$. 
Indeed, then, by the mean value theorem,  
the finite differences $\De g_j=(\De g)(j\ze)$, where $(\De g)(\xi)=g(\xi+\ze)-g(\xi)$, decay faster 
than $g(j\ze)$ as $j\to\pm\infty$ as well. 

The summation by parts formula is as follows. Let $e^{i a\ze}-1\neq 0$. Then
\[
\ze\sum_{j\in \bZ} e^{-i aj\ze }g(j\ze)=\frac{\ze}{e^{i a\ze}-1}\sum_{j\in \bZ} e^{-i aj\ze }\De g_j.
\]
If additional differentiations further increase the rate of decay of the series as $j\to\pm\infty$, then the summation by part procedure can be iterated:
\bbe\label{e:sum_by_part}
\ze\sum_{j\in \bZ} e^{-i aj\ze }g_j=\frac{\ze}{(e^{i a \ze}-1)^n}\sum_{j\in \bZ} e^{- i aj\ze }\De^n g_j.
\ee
After the summation by parts, the series on the RHS of \eq{e:sum_by_part} needs to be truncated.
The truncation parameter can be chosen using the following lemma.
\begin{lem}\label{lem:err_sum_by_parts}
Let $n\ge 1, N>1$ be  integers, $\ze>0, a\in \bR$ and $e^{i a\ze }-1\neq 0$. 

Let $g^{(n)}$ be continuous and let the function
$\xi\mapsto G_n(\xi,\ze):=\max_{\eta\in [\xi,\xi+n\ze]}|g^{(n)}(\eta)|$ be in $L_1(\bR)$.
Then
\beqa\label{trunc_err_sum_by_parts_pos}
\left|\frac{\ze}{(e^{i a\ze}-1)^n} \sum_{j\ge N} e^{-i aj\ze }\De^n g_j\right|&\le& \left(\frac{\ze}{|e^{i a\ze}-1|}\right)^n\int_{N\ze}^{+\infty}G_n(\xi,\ze)d\xi,
\\\label{trunc_err_sum_by_parts_neg}
\left|\frac{\ze}{(e^{i a\ze}-1)^n} \sum_{j\le -N} e^{-i a j\ze }\De^n g_j\right|&\le& \left(\frac{\ze}{|e^{i a\ze}-1|}\right)^n\int_{-\infty}^{-N\ze}G_n(\xi,\ze)d\xi.
\eqa
\end{lem}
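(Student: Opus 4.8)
The plan is to first strip away the prefactor and reduce \eq{trunc_err_sum_by_parts_pos} to a clean inequality between a one-sided sum of $n$-th differences and a tail integral of $G_n$. Since $|e^{-iaj\ze}|=1$, the triangle inequality gives $\left|\frac{\ze}{(e^{ia\ze}-1)^n}\sum_{j\ge N}e^{-iaj\ze}\De^n g_j\right|\le \frac{\ze}{|e^{ia\ze}-1|^n}\sum_{j\ge N}|\De^n g_j|$, so \eq{trunc_err_sum_by_parts_pos} follows once we establish the \emph{difference--derivative} bound $\sum_{j\ge N}|\De^n g_j|\le \ze^{n-1}\int_{N\ze}^{+\infty}G_n(\xi,\ze)\,d\xi$. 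The factor $(\ze/|e^{ia\ze}-1|)^n$ then reappears upon multiplying back, matching the stated right-hand side exactly.

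To prove the difference--derivative bound I would use the integral representation of the forward difference together with induction on $n$. For $n=1$ one has $\De g_j=\int_{j\ze}^{(j+1)\ze}g'(y)\,dy$; since the intervals $[j\ze,(j+1)\ze]$, $j\ge N$, tile $[N\ze,+\infty)$, we get $\sum_{j\ge N}|\De g_j|\le\int_{N\ze}^{+\infty}|g'(y)|\,dy\le\int_{N\ze}^{+\infty}G_1(\xi,\ze)\,d\xi$. For the inductive step I would write $\De^n g_j=\int_0^\ze\big(\De^{n-1}[g'(\cdot+u)]\big)_j\,du$, which is legitimate because $\De^{n-1}$ is a fixed finite linear combination in the index and commutes with $\int_0^\ze du$; here $\De^{n-1}[g'(\cdot+u)]$ denotes the $(n-1)$-st difference (step $\ze$) of the sampled function $\psi_u(x):=g'(x+u)$, whose $(n-1)$-st derivative is $g^{(n)}(\cdot+u)$.

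Applying Tonelli (all integrands are nonnegative, and finiteness is guaranteed by $G_n(\cdot,\ze)\in L_1$) and then the induction hypothesis to each $\psi_u$ gives $\sum_{j\ge N}|\De^n g_j|\le\ze^{n-2}\int_0^\ze\!\int_{N\ze}^{+\infty}G_{n-1}[\psi_u](\xi,\ze)\,d\xi\,du$, where $G_{n-1}[\psi_u](\xi,\ze)=\max_{\eta\in[\xi+u,\,\xi+u+(n-1)\ze]}|g^{(n)}(\eta)|$. The decisive observation is that for $u\in[0,\ze]$ the shifted window $[\xi+u,\xi+u+(n-1)\ze]$ is contained in $[\xi,\xi+n\ze]$, so $G_{n-1}[\psi_u](\xi,\ze)\le G_n(\xi,\ze)$; carrying out the now trivial $u$-integration produces the extra factor $\ze$ and yields $\ze^{n-1}\int_{N\ze}^{+\infty}G_n(\xi,\ze)\,d\xi$, with the lower limit still exactly $N\ze$.

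I expect the main obstacle to be obtaining the \emph{sharp} lower limit $N\ze$. The crude route of expanding $\De^n g_j$ in one step as an $n$-fold integral over $[0,\ze]^n$ and then tiling forces the argument $j\ze+u_1+\cdots+u_n$ to sweep a window of length $n\ze$, which pushes the limit down to $(N-n)\ze$ and loses sharpness. Peeling off a single derivative at a time in the induction, so that the max-window grows by only $\ze$ per step and its left endpoint never drops below $N\ze$, is precisely what keeps the limit sharp while producing the correct power $\ze^{n-1}$. Finally, \eq{trunc_err_sum_by_parts_neg} follows by the same argument applied to the reflected function $x\mapsto g(-x)$, which interchanges the left and right tails and converts $\int_{N\ze}^{+\infty}$ into $\int_{-\infty}^{-N\ze}$.
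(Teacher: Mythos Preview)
Your proof is correct and follows essentially the same approach as the paper's: both reduce $|\De^n g|$ to $|g^{(n)}|$ by peeling off one difference/derivative at a time via the mean value theorem (the paper writes this as the chain $|(\De^n g)(\xi)|\le \ze\max_{[\xi,\xi+\ze]}|(\De^{n-1}g')|\le\cdots\le\ze^n G_n(\xi,\ze)$). Your inductive organization, with the integral representation $\De^n g_j=\int_0^\ze(\De^{n-1}\psi_u)_j\,du$ and the window-containment step, makes explicit the passage from the sum to the integral with the sharp lower limit $N\ze$, which the paper leaves to the reader.
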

\begin{proof} Using the mean value theorem, we obtain
\[
|(\De^n g)(\xi)|\le \ze \max_{\xi\in [\xi,\xi+\ze]}|(\De^{n-1} g')(\xi)|\le\cdots \le\ze^n\max_{\eta\in [\xi,\xi+n\ze]}|g^{(n)}(\eta)|.
\]
\end{proof}

  \subsection{Gaver-Wynn Rho algorithm}\label{GavWynn} 
The Gaver-Stehfest algorithm approximates  the inverse Laplace transform $V(T)$  of $\tilde V$  by 
\begin{equation}\label{GS31}
V(T, M)=\frac{\ln(2)}{t}\sum_{k=1}^{2M}\zeta_k \tV\left(\frac{k\ln(2)}{T}\right),
\end{equation}
where $M\in \bN$,
\begin{equation}\label{GS32}
\zeta_k(t, M)=(-1)^{M+k}\sum_{j=\lfloor (k+1)/2\rfloor}^{\min\{k,M\}}\frac{j^{M+1}}{M!}\left(\begin{array}{c} M \\ j\end{array}\right)
\left(\begin{array}{c} 2j \\ j\end{array}\right)\left(\begin{array}{c} j \\ k-j\end{array}\right)
\end{equation}
and $\lfloor a \rfloor$ denotes the largest integer that is less than or equal to  $a$. 
 In the paper, as in \cite{paired,Contrarian,EfficientLevyExtremum}, we apply Gaver-Wynn-Rho (GWR) algorithm, which is
  more stable than the Gaver-Stehfest method.
  Given a converging sequence $\{f_1, f_2,
\ldots\}$, Wynn's algorithm estimates the limit $f=\lim_{n\to\infty}f_n$ via $\rho^1_{N-1}$, where $N$ is even,
and $\rho^j_k$, $k=-1,0,1,\ldots, N$, $j=1,2,\ldots, N-k+1$, are calculated recursively as follows:
\begin{enumerate}[(i)]
\item
$\rho^j_{-1}=0,\ 1\le j\le N;$
\item
$\rho^j_{0}=f_j,\ 1\le j\le N;$
\item
in the double cycle w.r.t. $k=1,2,\ldots,N$, $j=1,2,\ldots, N-k+1$, calculate
\[
\rho^j_{k}=\rho^{j+1}_{k-2}+k/(\rho^{j+1}_{k-1}-\rho^{j}_{k-1}).
\]
We apply Wynn's algorithm with the Gaver functionals
\[
f_j(T)=\frac{j\ln 2}{T}\left(\frac{2j}{j}\right)\sum_{\ell=0}^j (-1)^j\left(\frac{j}{\ell}\right)\tilde f((j+\ell)\ln 2/T).
\]
 \end{enumerate}
 The convergence
of the GS-algorithm is established in \cite{KuznetsovGaverStehfest} but no error estimate is given, and the conditions
for convergence are difficult to verify, for the functions of the complicated nature especially. We suggest
to use the following ad-hoc procedure to estimate the error of the GS and GWR algorithms.
 We take different $a\in\bR$ and modify the Bromwich integral
\bbe\label{tVBrom_a}
V(f;T;x_1,x_2)=\frac{e^{aT}}{2\pi i}\int_{\Re q=\sg}e^{qT}\tV(f;q+a;x_1,x_2)\,dq.
\ee
The difference among the results for $a\in [0,1]$ can be used as a proxy for the error of the GS- or GWR-algorithm.

Note that the simple trick \eq{tVBrom_a} is very useful if $T$ is large which in applications to option pricing means options of long maturities. Then $q=k\ln(2)/T$ is small but efficient calculations of $\tV(f;q,x_1,x_2)$ are possible if $q\ge \sg$, where $\sg>0$ is determined by the parameters of the process and payoff function. Hence, if $T$ is large, we use \eq{tVBrom_a} with $a$ satisfying 
$\ln(2)/T+a>\max\{-\psi(i\mumpr),-\psi(i\muppr)\}$.

\end{document}